\newif\ifpersonal
\newif\ifarxiv
\newcommand*{\personal}[1]{\textcolor{blue}{(Personal: #1)}}
\newcommand*{\todo}[1]{\textcolor{red}{(Todo: #1)}}
\newcommand*{\personal}[1]{}
\newcommand*{\todo}[1]{}
\DeclareMathOperator{\NE}{NE}
\DeclareMathOperator{\vir}{vir}%
\newcounter{dummy} \numberwithin{dummy}{subsection}
\newtheorem{prop}[dummy]{Proposition}
\newtheorem{cor}[dummy]{Corollary}
\newtheorem{lem}[dummy]{Lemma}
\newtheorem{defn}[dummy]{Definition}
\newtheorem{thm}[dummy]{Theorem}
\numberwithin{equation}{subsection}
\theoremstyle{remark}
\newtheorem{remark}[dummy]{Remark}
\newtheorem{example}[dummy]{Example}
\newtheorem{construction}[dummy]{Construction}
\newcommand{\C}{\mathcal{C}}
\newcommand{\T}{\mathcal{T}}
\newcommand{\Sp}{\mathrm{Sp}}
\newcommand{\D}{\mathcal{D}}
\newcommand{\V}{\mathcal{V}}
\newcommand{\iCat}{\mathrm{Cat}_{\infty}}
\newcommand{\Fin}{\mathrm{Fin}_*}
\newcommand{\nfin}{\langle n \rangle}
\newcommand{\mfin}{\langle m \rangle}
\newcommand{\twofin}{\langle 2 \rangle}
\newcommand{\onefin}{\langle 1 \rangle}
\newcommand{\M}{\mathrm{M}}
\newcommand{\Op}{\mathcal{O}}
\newcommand{\Opmonoidal}{\mathcal{O}^{\otimes}}
\newcommand{\Cmonoidal}{\mathcal{C}^{\otimes}}
\newcommand{\Dmonoidal}{\mathcal{D}^{\otimes}}
\newcommand{\Spaces}{\mathcal{S}}
\newcommand{\Prl}{\mathcal{P}r^L}
\newcommand{\Prlstable}{\mathcal{P}r^L_{\mathrm{Stb}}}
\newcommand{\derivedkmonoidal}{\mathrm{D}(k)^{\otimes}}
\newcommand{\derivedk}{\mathrm{D}(k)}
\newcommand{\inftyone}{(\infty,1)}
\DeclareMathOperator{\CAlg}{\mathrm{CAlg}}
\DeclareMathOperator{\Mod}{\mathrm{Mod}}
\DeclareMathOperator{\Fun}{\mathrm{Fun}}
\DeclareMathOperator{\Tw}{\mathrm{Tw}}
\DeclareMathOperator{\Ext}{\mathrm{Ext}}
\DeclareMathOperator{\R}{\mathbb{R}}
\DeclareMathOperator{\rhom}{\mathbb{R}\mathrm{Hom}}
\DeclareMathOperator{\Map}{\mathrm{Map}}
\DeclareMathOperator{\Env}{\mathrm{Env}}
\DeclareMathOperator{\Nerve}{\mathrm{N}}
\DeclareMathOperator{\Qcoh}{\mathrm{Qcoh}}
\DeclareMathOperator{\Perf}{\mathrm{Perf}}
\DeclareMathOperator{\Coh}{\mathrm{Coh}^{\mathrm{b}}}
\DeclareMathOperator{\Sh}{\mathrm{Sh}}
\DeclareMathOperator{\Ktheory}{\mathrm{K}}
\DeclareMathOperator{\Gtheory}{\mathrm{G}}
\newcommand{\Mnprestable}{\mathrm{M}^{\mathrm{pre}}_{0}}
\newcommand{\Mnstable}{\overline{\mathcal{M}}_{0,n}}
\newcommand{\Mstablemonoidal}{\overline{\mathcal{M}}^{\otimes}}
\newcommand{\Mgnstable}{\overline{\mathcal{M}}_{g,n}}
\newcommand{\MnK}{\mathfrak{M}_{0,n,\beta}}
\newcommand{\MKO}{\mathfrak{M}^{\otimes}}
\newcommand{\MnKO}{\mathfrak{M}(n,\beta)}
\newcommand{\Mmonoidal}{\mathrm{M}^{\otimes}}
\newcommand{\BO}{\mathrm{B}\mathcal{O}}
\newcommand{\dst}{\mathrm{dst}_k}
\newcommand{\dgcont}{\mathrm{DGCat}^{\mathrm{cont}}}
\newcommand{\dgcontk}{\mathrm{DGCat}_k^{\mathrm{cont}}}
\newcommand{\dgcontkcompact}{\mathrm{DGCat}_k^{\mathrm{cont}, \mathrm{c}}}
\newcommand{\dgcontcompact}{\mathrm{DGCat}^{\mathrm{cont}, \mathrm{c}}}
\newcommand{\stablemaps}{\mathbb{R}\overline{\mathcal{M}}_{0,n}(X,\beta)}
\newcommand{\stablemapstwo}{\mathbb{R}\overline{\mathcal{M}}_{0,2}(X,\beta)}
\newcommand{\intcart}{\int_{\mathrm{Cart}}}
\newcommand{\intcocart}{\int_{\mathrm{coCart}}}
\newcommand{\virtualsheaf}{\mathcal{O}^{\mathrm{vir}}}
\begin{document}

\title{Brane Actions, Categorification of Gromov-Witten theory and Quantum $\Ktheory$-theory}
\author{Etienne Mann}

\address{Etienne Mann, Universit\'e d'Angers, Département de mathématiques
Bâtiment I
Faculté des Sciences
2 Boulevard Lavoisier
F-49045 Angers cedex 01
France }
\email{etienne.mann@univ-angers.fr }

\author{Marco Robalo}

\address{Marco Robalo, Sorbonne  Université. Université Pierre  et  Marie  Curie,
Institut Mathématiques de Jussieu Paris Rive Gauche, CNRS, Case 247, 4, place Jussieu,
75252 Paris Cedex 05, France }
\email{marco.robalo@imj-prg.fr}

  \thanks{E.M is supported by the grant of the Agence Nationale de la
    Recherche ``New symmetries on Gromov-Witten theories'' ANR- 09-JCJC-0104-01. and "SYmétrie miroir et SIngularités irrégulières provenant de la PHysique "ANR-13-IS01-0001-01/02}
    
  \thanks{M. R was supported by a Postdoctoral Fellowship of the Fondation Sciences Mathematiques de Paris}

\begin{abstract}
Let $X$ be a smooth projective variety. Using the idea of brane actions discovered by To\"en, we construct a lax associative action of the operad of stable curves of genus zero on the variety $X$ seen as an object in correspondences in derived stacks. This action encodes the Gromov-Witten theory of $X$ in purely geometrical terms and induces an action on the derived category $\Qcoh(X)$ which allows us to recover the  Quantum $\Ktheory$-theory of Givental-Lee. 
\end{abstract}


\personal{
\begin{center}
LIST OF TODOs TEST
\end{center}
\vspace{1cm}
Solve  PROP \ref{prop-compatiblePerfCoh}\\
1) the fact these are maps of stacks and not derived schemes, which are not representable by derived schemes
2)Why proper quasi-smooth (why of finite presentation?) pullbacks preserve bounded coherent? Meaning, why are they of finite tor amplitude.\\
\vspace{1cm}
THM \ref{thmbranes}\\
1) Re-check the proof specially the verification it is a cocartesian fibration\\
\vspace{1cm}
H-descent: I think the proof is correct because we are only describing the lax structure and for this we only need the structure sheaf. As the maps  in the diagram are closed immersions, therefore proper, then they preserve bounded coherent.\\ 
1) 
\newpage
}

\maketitle
\tableofcontents

\section{Introduction}

Gromov-Witten invariants were introduced by Kontsevich and Manin in algebraic geometry in \cite{KMgwqceg, MR1363062}. The foundations were then completed by Behrend,
Fantechi and Manin in \cite{Behrend-Manin-stack-stable-mapGWI-1996},
\cite{MR1437495} and \cite{MR1431140}. In symplectic
geometry, the definition is due to Y. Ruan and G. Tian in \cite{RTmqc}, \cite{MR1390655} and
\cite{MR1483992}.  Mathematicians developed some 
techniques to compute them: via a localization
formula proved by Graber and Pandharipande in
\cite{MR1666787}, via a degeneration formula proved
by J. Li in \cite{MR1938113} and another one called quantum Lefschetz proved by Coates-Givental
\cite{Givental-Coates-2007-QRR} and Tseng \cite{tseng_orbifold_2010}.

 These invariants can be encoded using different
mathematical structures: quantum products, cohomological field theories (Kontsevich-Manin in
\cite{KMgwqceg}), Frobenius manifolds (Dubrovin in \cite{Dtft}), Lagrangian cones and Quantum
$D$-modules (Givental \cite{MR2115767}), variations of non-commutative Hodge structures (Iritani
\cite{Iritani-2009-Integral-structure-QH} and Kontsevich, Katzarkov and Pantev in
\cite{Katzarkov-Pantev-Kontsevich-ncVHS}) and so on, and used to express different aspects of mirror symmetry. Another important aspect concerns the 
study of the functoriality of Gromov-Witten invariants via crepant resolutions or flop transitions in
terms of these structures (see \cite{MR2234886}, \cite{MR2360646}, \cite{CCIT-wall-crossingI},
\cite{CCIT-computingGW}, \cite{Bryan-Graber-2009}, \cite{MR2683208}, \cite{2013arXiv1309.4438B},
\cite{2014arXiv1407.2571B}, \cite{2014arXiv1410.0024C}, etc).\\

The goal of this project is to study a suggestion of Manin and To\"en:\\

 \emph{Can the Gromov-Witten invariants of $X$ be detected at the level of the derived category $\Qcoh(X)$?}.\\

We first recall the classical construction of these invariants. Let $X$ be a smooth projective variety (or orbifold). The basic ingredient to define GW-invariants is the moduli stack of stable maps to $X$ with a fixed degree $\beta \in H_{2}(X,\mathbb{Z})$, $\overline{\mathcal{M}}_{g,n}(X,\beta)$. The evaluation at the marked
points gives maps of stacks $ev_i :\overline{\mathcal{M}}_{g,n}(X,\beta) \to X$ and
 forgetting the morphism and stabilizing the curve gives a map
$p:\overline{\mathcal{M}}_{g,n}(X,\beta) \to \overline{\mathcal{M}}_{g,n}$. 

To construct the invariants, we integrate over ``the fundamental class'' of the moduli stack
$\overline{\mathcal{M}}_{g,n}(X,\beta)$. For this integration to be possible, we need this moduli stack to be
proper, which was proved by Behrend-Manin \cite{Behrend-Manin-stack-stable-mapGWI-1996} and some form of smoothness. In general, the stack $\overline{\mathcal{M}}_{g,n}(X,\beta)$ is not smooth and has many components with different dimensions. Nevertheless and thanks to a theorem of Kontsevich  \cite{MR1363062}, it is quasi-smooth - in the sense that locally it looks like the intersection of two smooth sub-schemes of smooth scheme. In genus zero however this stack is known to be smooth under some assumptions on the geometry of $X$, for instance, when $X$ is the projective space or a Grassmaniann, or more generally when $X$ is convex, \textit{i.e.}, if for any map $f:\mathbb{P}^1\to X$, the group $\mathrm{H}^1(\mathbb{P}^1, f^*(\mathrm{T}_X))$ vanishes. See \cite{MR1492534}. 

Behrend-Fantechi then defined in \cite{MR1437495} a
``virtual fundamental class'', denoted by $[\overline{\mathcal{M}}_{g,n}(X,\beta)]^{\vir}$, which is a cycle in the Chow ring of
$\overline{\mathcal{M}}_{g,n}(X,\beta)$ that plays the role of the usual fundamental class. Finally, this allows us to define the maps
that encoded GW-invariants as 

\begin{align}
  \label{eq:2}
 I_{g,n,\beta}^{X}: H^{*}(X)^{\otimes n}& \to H^{*}(\overline{\mathcal{M}}_{g,n}) \\
(\alpha_{1}\otimes\ldots \otimes \alpha_{n}) &\mapsto
\mathrm{Stb}_{*}\left(\left[\overline{\mathcal{M}}_{g,n}(X,\beta)\right]^{\vir}\cup (\cup_i ev^{*}_i(\alpha_{i})) \nonumber
\right)
\end{align}

\noindent and we set

\begin{align}
  \label{eq:2-2}
 I_{g,n}^{X}:=\sum_\beta\, I_{g,n,\beta}^{X}
\end{align}

This collection of maps verifies some particular compatibilities as $n$ and $\beta$ vary, summarized in the notion of cohomological field theory \cite[\S 6]{KMgwqceg}. \\
 
 Latter, in \cite{MR2040281,MR1786492}, Lee and Givental defined morphisms

 \begin{align}
 \label{eq:6}
 \mathrm{K}_{g,n,\beta}^{X}: \mathrm{K}(X)^{\otimes n}& \longrightarrow \mathrm{K}(\overline{\mathcal{M}}_{g,n}) \\
 (\gamma_{1}\otimes\ldots \otimes \gamma_{n}) &\longmapsto
  \mathrm{Stb}_{*}\left(\mathcal{O}^{\mathrm{vir}}_{\overline{\mathcal{M}}_{g,n}(X,\beta)}\otimes (\otimes_i \mathrm{ev}^{*}_i(\gamma_{i})) \right)\nonumber
 \end{align}
 
 \noindent where $\mathcal{O}^{\mathrm{vir}}_{\overline{\mathcal{M}}_{g,n}(X,\beta)}$ is an element in
 $\mathrm{G}_0(\overline{\mathcal{M}}_{g,n}(X,\beta))$ which is called the virtual structure sheaf and which
 plays a similar role to that of the virtual fundamental class. The main result of \cite{MR2040281} is
 that these morphisms satisfy the axioms of a $\mathrm{K}$-field theory. Notice that these axioms are very
 similar to the one of the cohomological field theory except the splitting axiom which is more
 elaborated as it was first explained by Givental in \cite{MR1786492}.\\

\subsection{Main results}
Our main goal in this paper is to provide a first answer to the question of Manin and To\"en and construct a system of Gromov-Witten invariants at the level of the derived category of $X$. Denote by $\Qcoh$, the derived category of quasi-coherent sheaves.
In this paper, we define dg-functors
\begin{align}
 \label{eq:7}
\mathrm{QC}_{0,n,\beta}^{X}: \Qcoh(X)^{\otimes n}& \to \Qcoh(\overline{\mathcal{M}}_{0,n}) \\
(E_{1}\otimes\ldots \otimes E_{n}) &\mapsto
\mathrm{Stb}_{*}\left(\mathcal{O}_{\mathbb{R}\overline{\mathcal{M}}_{0,n}(X,\beta)}\otimes (\otimes_i \widetilde{\mathrm{ev}}^{*}_i(E_{i})) \nonumber
\right)
 \end{align}
 where $\mathcal{O}_{\mathbb{R}\overline{\mathcal{M}}_{0,n}(X,\beta) }$ is the structure sheaf of
 the derived stack
 $\mathbb{R}\overline{\mathcal{M}}_{0,n}(X,\beta)$ which is the natural derived enhancement of the
 stack $\overline{\mathcal{M}}_{0,n}(X,\beta)$. Notice that the map
 $\mathrm{Stb}:\mathbb{R}\overline{\mathcal{M}}_{0,n}(X,\beta) \to \overline{\mathcal{M}}_{0,n}$
 and $\widetilde{\mathrm{ev}}_{i}:\mathbb{R}\overline{\mathcal{M}}_{0,n}(X,\beta) \to X$ are related with
 the classical stabilization maps  (resp. $\mathrm{ev}_{i}$) by the natural closed immersion
 $\overline{\mathcal{M}}_{0,n}(X,\beta)\hookrightarrow\mathbb{R}\overline{\mathcal{M}}_{0,n}(X,\beta)$.\\

Our main theorem says that these dg-functors satisfy the axioms of a system of Gromov-Witten invariants and that when passing to $\mathrm{K}$-theory we recover the invariants of Givental-Lee:

\begin{thm}\label{thm:Coh,field,theory}(See Prop. \ref{laxactiongwcategorification}, 
\ref{prop-compatiblePerfCoh}, \ref{prop-splitting} and  Cor. \ref{cor-comparisonwithLee}) Let $X$ be a smooth projective variety over $\mathbb{C}$.
  \begin{enumerate}
  \item The dg-functors $\mathrm{QC}_{0,n,\beta}^{X}$ satisfy the axioms of a $\Qcoh$-field theory i.e., the
fundamental class, the mapping to a point and the splitting axioms similar to the one in $\mathrm{K}$-theory.
  \item The dg-functors and the axioms on $\mathrm{QC}_{0,n,\beta}^{X}$ restrict also to $\Perf(X)$, the derived category perfect complexes \footnote{Recall that for $X$ smooth, the inclusion of $\Perf(X)$ inside $\Coh(X)$ - the derived category of bounded coherent sheaves - is an equivalence}.
    \item By applying the $\mathrm{K}$-theory functor we recover the K-theoretic GW classes of Givental-Lee of \cite{MR2040281,MR1786492} and its splitting principle.
  \end{enumerate}
\end{thm}

\medskip
 
In order to explain the strategy to prove this theorem, let us start by recalling that the collection of homology groups
$\{H_{*}(\overline{\mathcal{M}}_{g,n}),g,n \in \mathbb{N}\}$ forms a
modular operad \cite[\S 6.3]{MR1601666}. By definition, a cohomological field
theory in the sense discussed above, is an algebra over this operad (see \cite[\S
2.25]{MR1601666}). In this case, the maps $I_{g,n,\beta}^{X}$, written in the more suggestive form

 \begin{equation}
   \label{eq:3}
   H_{*}(\overline{\mathcal{M}}_{g,n}) \otimes H^{*}(X)^{\otimes (n-1)} \to H^{*}(X)
 \end{equation}

 \noindent are only expressing the action of $\{H_{*}(\overline{\mathcal{M}}_{g,n}),g,n \in \mathbb{N}\}$ and the conditions to which they are submitted are then controlled by the rules of operation of gluing curves along marked points $\overline{\mathcal{M}}_{g,n}\times \overline{\mathcal{M}}_{g',m}\to \overline{\mathcal{M}}_{g+g',n+m-2}$.
 
This operadic viewpoint is at the heart of this paper as our original goal was exactly to study the existence of this action before passing to cohomology. In fact, the definition of the $I_{g,n,\beta}$ evokes the diagrams of stacks
 
 \begin{align}
   \label{eq:4}
    \xymatrix{ & \overline{\mathcal{M}}_{g,n}(X,\beta) \ar[dl]_{(p,ev_{1}, \ldots ,ev_{n-1})} \ar[dr]^{ev_{n}} & \\
      \overline{\mathcal{M}}_{g,n} \times X^{n-1} & & X}
 \end{align}
 
\noindent and as explained in the pioneering works of Kapranov-Getzler \cite{MR1601666} the operadic structure on the homology groups $H_{*}(\overline{\mathcal{M}}_{g,n})$ is induced by the fact that the family of stacks $\Mgnstable$ forms itself an operad in stacks $\Mstablemonoidal$, with composition given by gluing curves along marked points. One can hope to investigate if the diagrams (\ref{eq:4}) seen as morphisms

$$
\xymatrix{\overline{\mathcal{M}}_{g,n} \times X^{n-1}\ar@{-->}[r]& X}
$$
 
\noindent in the category of correspondences in stacks can themselves be seen as part of an action of $\Mstablemonoidal$ and if the action in (\ref{eq:3}) is induced by this new one after passing to cohomology. There is however an immediate problem that appears if we restrict ourselves to work in the setting of usual stacks: the resulting yoga of virtual classes does not agree with the one Gromov-Witten theory requires. In fact, as it understood today, virtual classes are not natural to this setting but instead, they are part of the framework of derived algebraic geometry \cite{MR3285853,toen-vezzosi-hag2}. Thanks to Sch\"urg-To\"en-Vezzosi \cite{2011-Schur-Toen-Vezzosi}, the Deligne-Mumford stack $\overline{\mathcal{M}}_{g,n}(X,\beta)$ has a natural quasi-smooth derived 
enrichment $\mathbb{R}\overline{\mathcal{M}}_{g,n}(X,\beta)$ whose structure sheaf
$\mathcal{O}_{\mathbb{R}\overline{\mathcal{M}}_{g,n}(X,\beta)}$ is expected to produce to the virtual cycle of
Behrend-Fantechi via a Chern character yet to be defined - see the discussion at the end of this introduction and \cite[\S 3.1]{MR3285853}.

In this case one can replace the diagrams \eqref{eq:4} by their natural derived version, 
\begin{equation}
\label{maincorrespondence}
\xymatrix{
&\R\Mgnstable(X,\beta)\ar[dr]\ar[dl]&\\
\Mgnstable\times X^{n-1} & & X
}
\end{equation}
noticing that as $\overline{\mathcal{M}}_{g,n}$ and $X$ are smooth they don't have non-trivial derived enhancements. These new diagrams are main the protagonists of our main result, which is a highly non-trivial geometric phenomena behind theorem \ref{thm:Coh,field,theory}:

\begin{thm}(See Theorem \ref{laxactiongwgraded})
\label{laxactiongwnaive}
Let $X$ be a smooth projective complex variety. Then $X$ seen as an object in correspondences in derived stacks carries a lax associative action of the operad of stable curves of genus zero $\Mstablemonoidal_0$ with multiplication given by the correspondences

\begin{equation}
\label{laxcorrespondences}
\xymatrix{
&\coprod_\beta\R\overline{\mathcal{M}}_{0,n}(X,\beta)\ar[dl]\ar[dr]&\\
\overline{\mathcal{M}}_{0,n}\times X^{n-1}&& X
}
\end{equation}

\noindent This action is lax associative, with lax structure given by the gluing maps

\begin{equation}
\label{explicitformulalaxstructure}
\resizebox{1. \hsize}{!}{
\xymatrix{
(\coprod_\beta \R\overline{\mathcal{M}}_{0,n}(X,\beta)\times_X (\coprod_\beta \R\overline{\mathcal{M}}_{0,m}(X,\beta))\ar[r]& (\coprod_\beta \R\overline{\mathcal{M}}_{0,n+m-2}(X,\beta))\times_{\overline{\mathcal{M}}_{0,n+m-2}}(\Mnstable\times \overline{\mathcal{M}}_{0,m})
}
}
\end{equation}

\noindent $\forall n, m\geq 2$, which are not equivalences. Moreover, this action respects the gradings.
\end{thm}

The structure of lax associativity (meaning, the family of maps (\ref{explicitformulalaxstructure}) and their compatibility under gluings) is the mechanism that encodes the shape of the splitting principle. The fiber product in the left hand side of \ref{explicitformulalaxstructure} appears naturally from the gluing of a stable map with $n$ marked points to a stable map with $m$ marked points.  The fiber product on the right hand side appears when we consider directly stable maps with $n+m-2$ marked points. The failure of the maps \ref{explicitformulalaxstructure} to be equivalences means that this action is not associative in the strongest naive sense. This failure is due to the presence on the right hand side of stable maps glued along chains of trees of $\mathbb{P}^1$'s of arbitrary length, which disappear after stabilization.  As we shall explain in section \ref{section-quantumKfromlax}, the maps \ref{explicitformulalaxstructure} are surjective and in fact the left hand side is the zero level of an hypercover of the right hand side and these excess of trees of $\mathbb{P}^1$'s correspond to the higher codimension levels of this hypercover. Moreover, these excess trees are exactly the metric corrections introduced by Givental-Lee in Quantum $\Ktheory$-theory \cite{MR2040281,MR1786492} to explain the splitting principle. \footnote{In ordinary cohomological Gromov-Witten invariants these lax structures becomes equivalences as the maps (\ref{explicitformulalaxstructure}) are birational and surjective.}\\

The Theorem \ref{thm:Coh,field,theory} is then obtained by applying the functor $\Qcoh$ to the theorem \ref{laxactiongwnaive}.\\

 \subsection{Plan of the Paper}
 \label{section-strategy}
 In this section we sketch the plan of the paper together with the strategy to prove the theorems \ref{laxactiongwnaive} and \ref{thm:Coh,field,theory}.\\
 
A first problem that we face when trying to prove theorem \ref{laxactiongwnaive} is that by working with derived stacks we are automatically pushed into the setting of higher categories \cite{lurie-htt, lurie-ha} where everything works up to specifying homotopies, homotopies between homotopies, and so on. In this setting the process of assembling the diagrams (\ref{maincorrespondence}) as part of an action of the operad of stable curves becomes more sophisticated. In practical terms, operads have to be replaced by $\infty$-operads \cite{lurie-ha} and actions can no longer be constructed by hand. The only solution is to prescribe some assembly mechanism that produces and ensures these coherences for free.  For this purpose in we will explore the idea of \emph{brane actions} discovered by To\"{e}n in \cite[Theorem 0.1]{1307.0405}. Section \ref{section-braneactions} is dedicated to reproducing the results of To\"{e}n in the setting of higher operads. More precisely, we show (see theorem \ref{thmbranes}) that if if $\Op$ is a single-colored $\infty$-operad in spaces that is \emph{coherent} - in the sense introduced by Lurie in \cite{lurie-ha} - then its space of binary operations, seen as an object in the category of co-correspondences in spaces, has a natural $\Op$-algebra structure. The idea that brane actions are related to Gromov-Witten invariants was first suggested in \cite{1307.0405} where the present work was first announced.\\

\begin{remark}
All the results in this paper concern Gromov-Witten invariants in genus zero. To adapt this results to higher genus one would first need to developed the foundations of modular $\infty$-operads, replacing dendroidal sets by a more general notion of graphical sets. We believe that our results will also work in this setting, in particular the idea of brane action. We leave this for future works. One also expects this action to be compatible with the structure of cyclic $\infty$-operad, as recently studied in \cite{1611.02591}.
 \end{remark}

Section \ref{section-stableactionsgw} is dedicated to the proof of the theorem \ref{laxactiongwnaive}. This proof will require several technical stages. Mainly, in order to make the spaces of stable maps and the correspondences (\ref{maincorrespondence}) appear as part of a brane action we will need to consider a certain modified version of the operad of stable curves, introduced by Costello in  \cite{Costello-HighergenusGW2006}. Denote by $\NE(X)\subset \mathrm{H}_{2}(X,\mathbb{Z})$ the class of effective curves in $X$. For any $(n,\beta)\in \mathbb{N}\times\NE(X)$, denote by $\mathfrak{M}_{0,n,\beta}$ the moduli stack classifying nodal curves of genus $0$ with $n$ marked points where each
 irreducible component comes with the data of an element of $\NE(X)$ and the sum of all these gradings is $\beta$. Depending on these gradings we impose certain stability conditions. These are smooth Artin stacks. See Section \ref{stacksofcostello}. The collection of these moduli spaces defines a graded $\infty$-operad in the $\infty$-topos of derived stacks $\MKO$, with grading given by $\NE(X)$. In order to make sense of this we will have to define a notion of graded operads in the setting of higher categories and prove that the construction of brane actions of Section \ref{section-braneactions} extends to this graded context. This is done in Sections \ref{gradedoperads} and \ref{gradedbraneactiontopos}.

 Moreover we have maps 
 $\mathbb{R}\overline{\mathcal{M}}_{0,n}(X,\beta) \to \mathfrak{M}_{0,n,\beta}$ sending a stable map $(C,f)$
 to the curve  $C$ (without stabilizing) and marking each irreducible component $C_{i}$ with the grading $\beta_{i}:=[(f\mid_{C_{i}})_{*}C_{i}] \in \NE(X)$. 
In this case we have diagrams given by evaluation at the marked points

\begin{equation}
\label{Costellocorrespondence}
\xymatrix{
&\R\Mnstable(X,\beta)\ar[dr]\ar[dl]&\\
\mathfrak{M}_{0,n,\beta}\times X^{n-1} & & X
}
\end{equation}

The main technical result behind the theorem \ref{laxactiongwnaive} is the following result:

\begin{thm}
\label{mainthm} (See Cor. \ref{CostelloactiononX} and Prop. \ref{corollary-stableaction}).
Let $X$ be a smooth projective algebraic variety. Then $X$, seen as an object in the $\infty$-category of correspondences in derived stacks, has a natural structure of $\MKO$-algebra induced by the brane action of this operad, given by correspondences in the formula (\ref{Costellocorrespondence}).
\end{thm}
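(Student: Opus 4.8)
The plan is to deduce the statement from To\"{e}n's brane-action theorem \cite[Theorem 0.1]{1307.0405}, applied to Costello's operad $\MKO$. This requires three things: (i) that $\MKO$ be a \emph{coherent} $\infty$-operad, in the sense of Lurie, in the $\infty$-topos $\dSt$ of derived stacks, graded by the monoid $\NE(X)$; (ii) an invocation of the brane action to produce a $\MKO$-algebra structure in the $\infty$-category $\mathbf{coCorr}(\dSt)$ of co-correspondences; and (iii) a transport of this structure onto $X$ and into $\mathbf{Corr}(\dSt)$, together with the identification of the resulting structure maps with the correspondences of formula (\ref{Costellocorrespondence}).

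For (i), coherence is the conjunction of unitality, the requirement that the $\infty$-category of colours be a Kan complex --- here there is a single colour --- and a pushout condition on the spaces of semi-inert operations \cite[\S 3.3]{lurie-ha}. For the ungraded operad of genus-zero stable curves this is essentially carried out by To\"{e}n in \cite{1307.0405}: the geometric content is that a genus-zero prestable curve is canonically the pushout of its irreducible components along its nodes, that this identification is compatible with the operadic composition $\mathfrak{M}_{0,n_1+1,\beta_1}\times\mathfrak{M}_{0,n_2+1,\beta_2}\to\mathfrak{M}_{0,n_1+n_2,\beta_1+\beta_2}$ of \cite{Costello-HighergenusGW2006}, and that the forgetful morphisms between these smooth Artin stacks realise the required pushout squares. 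I would reduce the graded case to the ungraded one --- the $\NE(X)$-grading is assigned component by component and is locally constant, hence does not interact with the coherence squares --- and observe that since the $\mathfrak{M}_{0,n,\beta}$ are smooth, and so classical, no genuinely derived phenomena intervene at this stage; the delicate point is to phrase and verify coherence directly for $\infty$-operads internal to $\dSt$, not merely in spaces.

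Granting (i), To\"{e}n's theorem yields a $\MKO$-algebra structure in $\mathbf{coCorr}(\dSt)$ on the brane of $\MKO$, whose arity-$n$ structure map is a co-span assembled, via operadic composition and units, out of the stacks $\mathfrak{M}_{0,\bullet,\bullet}$ and the universal curves over them. The geometric core of the proof is to match this abstract output with (\ref{Costellocorrespondence}). For this I would use the presentation of the derived stable-maps stack as a relative derived mapping stack, $\mathbb{R}\overline{\mathcal{M}}_{0,n}(X,\beta)\simeq \mathbb{R}\Map_{\mathfrak{M}_{0,n,\beta}}\big(\mathcal{C}_{0,n,\beta},\, X\times\mathfrak{M}_{0,n,\beta}\big)_{\beta}$ out of the universal curve $\mathcal{C}_{0,n,\beta}\to\mathfrak{M}_{0,n,\beta}$ --- which exists as a derived stack because this map is proper, flat of relative dimension one and $X$ is projective --- together with the fact that this construction is compatible with gluing source curves along their node sections. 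Since $\mathbb{R}\Map(-,X)$ is contravariant in the source, it converts the co-spans of universal curves produced by the brane action into spans: the brane is replaced by $X$ (the constant, degree-zero maps from the universal rational curve), the middle terms become the $\mathbb{R}\overline{\mathcal{M}}_{0,n}(X,\beta)$, and the two legs become $(p,ev_1,\dots,ev_{n-1})$ and $ev_n$. The coherence data of $\MKO$ from step (i) then assemble this family of spans into a $\MKO$-algebra structure on $X$ in $\mathbf{Corr}(\dSt)$ --- what the abstract records as a lax associative action of $\Mstablemonoidal$ --- which is the assertion; the complete statement is \corref{CostelloactiononX}.

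The main obstacle is the interface between (ii) and (iii): one must show that the co-spans manufactured formally from the Costello stacks become, after the $\mathbb{R}\Map(-,X)$-twist, \emph{exactly} the correspondences (\ref{Costellocorrespondence}) --- that is, that ``glue rational curves along nodes, then map to $X$'' agrees, functorially and coherently in $\MKO$, with ``take stable maps to $X$ and glue them along marked points''. This amounts to controlling derived mapping stacks under pushouts of the source along the closed immersions given by node sections, and it is also the origin of the \emph{laxness} of the action, since the relevant base-change and exchange $2$-cells in $\mathbf{Corr}(\dSt)$ need not be invertible. Verifying Lurie's coherence criterion for $\MKO$ --- which demands a careful analysis of the stratification of the $\mathfrak{M}_{0,n,\beta}$ by topological type of the curve --- is the second technically delicate step.
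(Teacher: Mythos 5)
There is a genuine gap in step (i): the operad $\MKO$ is \emph{not} coherent as an $\infty$-operad in derived stacks, and the paper says so explicitly. Your geometric justification --- that a genus-zero prestable curve is canonically the pushout of its components along its nodes --- is true in schemes and classical $1$-stacks, but the coherence condition of Lurie (equivalently, the pullback condition of Corollary \ref{label2}, measured by the diagram (\ref{coherencetoposmonochromatic2})) requires the glued curve $C_{\tau\circ\sigma}$ to agree with the pushout $C_{\sigma}\coprod_{\ast}C_{\tau}$ computed in the $(\infty,1)$-category of derived stacks, and the inclusion of schemes into derived stacks does not commute with such pushouts, even along closed immersions. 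So your remark that ``no genuinely derived phenomena intervene at this stage'' because the $\mathfrak{M}_{0,n,\beta}$ are smooth is exactly where the argument breaks: the derived phenomenon lives in the pushout of the universal curves, not in the moduli stacks. Consequently Toën's theorem (or its topos version, Proposition \ref{propbranestopos}) cannot be invoked for $\MKO$, and there is no $\MKO$-algebra structure on the brane in co-correspondences of derived stacks; step (ii) does not exist as you state it.

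The repair --- and this is the paper's actual route --- is to observe that the part of the brane-action construction not using coherence still produces the natural transformation $\Tw(\Env(\mathfrak{M}))^{\otimes}\to \T/(-)^{op}$, $\sigma\mapsto C_{\sigma}$; it merely fails the fiber-product condition of Corollary \ref{label2}. One then composes \emph{first} with $\rhom_{(-)}(-,X\times(-))$: since $X$ is a (derived Artin) stack, $\rhom(-,X)$ sends the canonical comparison map $C_{\sigma}\coprod_{\ast}C_{\tau}\to C_{\tau\circ\sigma}$ to an equivalence, so the composite $\Tw(\Env(\mathfrak{M}))^{\otimes}\to \T/(-)^{op}\to\T/(-)$ \emph{does} satisfy the conditions of Corollary \ref{label2} and yields the map of operads $\MKO\to(\T/(-))^{\mathrm{corr},\otimes_\times}$ of Corollary \ref{CostelloactiononX}. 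Your step (iii) correctly identifies the relevant mechanism (behaviour of mapping stacks under pushouts of the source along node sections), but it must be moved upstream: it is not an interface issue after the brane action exists, it is the substitute for coherence that makes the action exist at all. Note also that Corollary \ref{CostelloactiononX} produces the correspondences through the full relative mapping stacks $\rhom_{/\mathfrak{M}_{0,n+1,\beta}}(\mathfrak{M}_{0,n+2,\beta},X\times\mathfrak{M}_{0,n+1,\beta})$; restricting to the open substacks $\stablemaps$ of formula (\ref{Costellocorrespondence}) is a separate argument (the stable sub-action of Section \ref{Section-stablesubaction}) which your proposal conflates with the main construction.
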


\begin{remark}
The action in the theorem \ref{mainthm} is strongly associative, in the sense that the lax structure analogue of the maps \ref{explicitformulalaxstructure} appearing naturally in this theorem are equivalences. As we shall see right below (in the formula \ref{laxstructure}) the appearance of a weaker form of associativity is hidden in the passage from the operad of Costello to the usual operad of stable curves.
\end{remark}

We believe that this result is the fundamental mechanism behind the organization of Gromow-Witten invariants.\\

Finally, in Section \ref{section-gwaction} we explain how to pass from this action to a lax action of the usual operad of stable curves $\Mstablemonoidal$, thus concluding the proof of theorem \ref{laxactiongwnaive}. Indeed, there is a map of operads $\MKO\to \Mstablemonoidal$ obtained by stabilizing the curve and forgetting the gradings. The key observation is that at the level of correspondences in derived stacks this map can also be seen as a lax associative map of operads in correspondences from $\Mstablemonoidal$ to $\MKO$, given by correspondences

\begin{equation}
\xymatrix{
&\coprod_\beta \mathfrak{M}_{0,n,\beta}\ar[dl]\ar@{=}[dr]&\\
\overline{\mathcal{M}}_{0,n}&&\coprod_\beta \mathfrak{M}_{0,n,\beta}
}
\end{equation}

This is not strongly associative, as the natural maps

\begin{equation}
\label{laxstructure}
\mathfrak{M}_{0,n,\beta}\times \mathfrak{M}_{0,m,\beta'}\to \mathfrak{M}_{0,n+m-2,\beta+\beta'}\times_{\overline{\mathcal{M}}_{0,n+m-2}}(\overline{\mathcal{M}}_{0,n}\times \overline{\mathcal{M}}_{0,m})
\end{equation}

\noindent are not invertible. On the right hand side we have pre-stable curves
that are glued via a tree of $\mathbb{P}^{1}$'s with two marked points whereas for the left hand side of
 we only have pre-stable curves that are glued directly. After stabilizing the two become the same. \\

We will denote this lax associative map as $\Mstablemonoidal\leadsto  \MKO$. The Theorem \ref{laxactiongwnaive} is obtained by composing the action of the theorem \ref{mainthm}
with this map of operads.\\

\personal{
\begin{remark}
To give a concise sense to this lax action one would need many aspects of the theory of $(\infty,2)$-categories and operads therein. As this is not currently available in the literature the Theorem \ref{laxactiongwnaive} will be formulated in a less evident but equivalent form - see Theorem \ref{laxactiongw}. We hope to improve this formulation in future versions and in particular to include more $(\infty,2)$-categorical details.\\
\end{remark}
}

In Section \ref{section-categorification}  we address the categorification of Gromov-Witten invariants as suggested by B. To\"{e}n in \cite{MR2483943} and prove Theorem \ref{thm:Coh,field,theory}. Namely, we explain how the actions of the Theorems \ref{mainthm} and \ref{laxactiongwnaive}  both pass to the derived category $\Qcoh(X)$ by taking pullback-pushforward along the correspondences (\ref{laxcorrespondences}) and how these actions restrict to the (dg)-derived categories of coherent $\Coh$ and perfect complexes $\Perf$, by taking pullback-pushfoward along the correspondences (\ref{laxcorrespondences}). These restricted action follow essentially formally from the theorem \ref{laxactiongwnaive}. The non-formal result is in  section \ref{section-quantumKfromlax} and concerns an explicit description of the splitting principle for $\Perf$ in terms of an h-descent theorem in derived algebraic geometry.\\

Theorem \ref{thm:Coh,field,theory}-(2) was the main motivation to this work. Y. Manin suggested to us that this action could be happening at the level of non-commutative motives - see also \cite{Manin-Gromov}. This is now a consequence of our results and the theory developed in \cite{MR3281141}.  However, it seems in fact more interesting that this action happens before motives, and even before the world of derived categories. It happens in the geometric world of derived stacks and correspondences between them as explained by our theorem \ref{laxactiongwnaive}.\\

Recall that by definition the $\Gtheory$-theory of $X$ is the $\Ktheory$-theory spectrum of the dg-category $\Coh(X)$. In the end of Section \ref{section-quantumKfromlax} and in Section \ref{section-comparisonourswithLee} we show the third part of theorem \ref{thm:Coh,field,theory}, namely, that applying the $\Gtheory$-theory functor to Thm \ref{thm:Coh,field,theory}-(2) we obtain a lax associative $\{\Gtheory(\overline{\mathcal{M}}_{0,n})\}_n$ -algebra structure on the spectrum $\Gtheory(X)$. This action recovers the formulas of Quantum $\Ktheory$-theory of Givental-Lee of \cite{MR2040281,MR1786492} and the lax structure explains the metric. 
To conclude the paper, in Section \ref{section-comparisonourswithLee} we observe that  the $\Ktheory$-theoretic GW classes obtained in Thm \ref{thm:Coh,field,theory}-(3) are exactly the same as the GW classes introduced by Givental-Lee. This follows from results of \cite{MR2496057} and \cite{1208.6325}. See Cor. \ref{cor-comparisonwithLee}.



\subsection{Future Directions}

\begin{center}
\textit{Connection with usual Gromov-Witten invariants in cohomology}\\
\end{center}

We believe the action given by the Theorem \ref{mainthm} is the main mechanism behind the yoga of
Gromov-Witten invariants. In fact, if a theory of Chow groups for derived Deligne-Mumford stacks
were already available with the correct pushforward functoriality, then, applying these Chow groups
directly to the action in correspondences, by deformation to the normal cone (also to be developed)
we believe we would immediately be able to recover the usual Gromov-Witten invariants in the sense
constructed by Beherend-Fantechi-Manin using virtual cycles.
However, a more interesting question remains: can the usual cohomogical Gromov-Witten invariants also be detected directly from the categorical action of the Theorem \ref{thm:Coh,field,theory} using some version of the derived category of $X$? Our result gives us an action in the periodic cyclic homology $\mathrm{HP}(X)$ but we don't know how to identify it with the usual invariants. A first ingredient to achieve this comparison is a Riemann-Roch theorem for derived quasi-smooth Deligne-Mumford stacks. However, even with this at our disposable, we don't know how to complete the comparison as the invariants obtained seem to live in the twisted sectors. We will investigate this in future works.\\

\personal{
\begin{center}
\textit{Classification of Categorical Field Theories}\\
\end{center}
Classical Gromov-Witten invariants are encoded by a
cohomological field theory. Givental conjectured a very nice reconstruction theorem for semi-simple
cohomological field theories and Teleman classified them, proving Givental's
expectations. Nevertheless, the classification by Teleman is quite unclear. Our results in
in this paper suggest the notion of a \emph{Categorical Field Theory} as a more suitable
mathematical object. One expects that an interpretation of Teleman's classification theorem in this
setting might enlighten the proof. We also believe this can bring a new understanding of the
Dubrovin conjectures.\\
}

\begin{center}
\textit{Quantum Product on categories of matrix factorizations}\\
\end{center}

Starting with a weighted homogeneous polynomial $W$, the moduli space of $W$-spin curves is a
smooth Deligne-Mumford stack but to define Fan-Jarvis-Ruan-Witten invariants, the same authors
construct two vector bundles whose ``difference'' provides an ersatz of a virtual fundamental
class. Notice that a similar situation appears also in Gromov-Witten theory: indeed the moduli space
$\overline{\mathcal{M}}_{g,n}(X,0)\simeq \overline{\mathcal{M}}_{g,n}\times X$ is smooth but its virtual fundamental class is not the
fundamental class as one needs to twist by the top Chern class of the Hodge bundle to get the
correct numbers. We think that a similar phenomena exists for spin curves, namely there is a
canonical
derived structure on it that is hidden. If one defines the correct derived structure, one expects to produce a structure of a lax action of $\{\Qcoh(\overline{\mathcal{M}}_{g,n})\}$ on the category of  matrix factorization of $W$. One expects this will provide a geometrical explanation for the results in \cite{MR3210178}.\\

\subsection{Acknowlegments} The original motivation for this project was the idea of categorifying Gromov-Witten as suggested by Bertrand To\"{e}n in \cite{MR2483943}. This was the topic of a weekly seminar ``Gromov-Witten invariants and Derived Algebraic Geometry'' organized in the University of Montpellier in 2012. It was there that the idea of brane actions appeared, together with the insight of their connection to Gromov-Witten theory. The authors then took the task of adapting these results to Gromov-Witten theory. We would like to thank everyone involved in the seminars and in many posterior discussions: Anthony Blanc, Benjamin Hennion, Thierry Mignon and B. To\"{e}n. We would particularly like to express our gratitude to B. To\"{e}n for his multiple comments and mathematical insights and for the continuous support during the long gestation period of this paper. The key idea of brane actions is due to him. Also, special thanks to Benjamin Hennion who gave us crucial insights during different stages of the project. We would also like to express our gratitude to  D. Calaque, A. Chiodo, H. Iritani,  M. Kontsevich, Y. Manin, T. Pantev, M. Porta, T. Sch\"{u}rg, G. Vezzosi and T. Yue YU for conversations, email changes and many insights related to this project.  \\
Finally we would like to thank the anonymous referee for this comments, corrections and suggestions that helped improving the paper.

\subsection{Prerequisites}
We assume the reader is familiar with derived algebraic geometry (in the sense of \cite{MR3285853,toen-vezzosi-hag2}) and with the tools of higher category theory and higher algebra \cite{lurie-ha, lurie-htt}, particularly with the theory of $\infty$-operads.\\

\section{Brane actions}
\label{section-braneactions}

We start by providing an alternative construction of the brane action of \cite[Thm 0.1]{1307.0405} that has the advantage of being formulated purely in terms of $\infty$-categories, avoiding stricitification arguments. This reformulation is crucial to the proofs given in this paper.

\subsection{Brane actions for $\infty$-operads in spaces}

The sphere $S^n$ is a $E_{n+1}$-algebra in the category of cobordisms of dimension $n+1$. In the case when $n=1$ the multiplication map is given by the pair of pants. Recall that $S^n$ is the space of binary operations of the topological operad $E_{n+1}$. These are the standard examples of the so-called \emph{brane actions}, where an operad acts on its space of binary operations. In \cite{1307.0405} brane actions were constructed for any monochromatic $\infty$-operad satisfying some mild conditions (being of configuration type, or, equivalently, coherent). In order for this generalization to have a sense, we need to understand brane actions not at the level of cobodisms but rather at the level of co-spans. The construction requires some non-trivial strictification arguments. In this section we provide an alternative construction that avoids strictifications. As we shall explain, this action is deeply related to the definition of co-correspondences and to the definition of coherent operad introduced by J.Lurie in \cite{lurie-ha}.

\subsubsection{ Algebras in Correspondences and Twisted Arrows}
\label{correspondencesandtwistedarrows}

Let $\C$ be an $(\infty,1)$-category with finite limits. Then we can form an $(\infty,2)$-category of correspondences in $\C$, which we will denote as $\mathrm{Spans}_1(\C)$. This was constructed in \cite[Section 10]{1212.3563}. Moreover, admits a symmetric monoidal structure where every object is fully dualizable - see \cite[Thm 1.1]{1409.0837}. Dually, if $\C$ has finite colimits one can also form an $(\infty,2)$-category of co-spans in $\C$. We dispose of two canonical functors

$$
\C\to \mathrm{Spans}_1(\C)
$$
\noindent and

$$
\C^{op}\to \mathrm{Spans}_1(\C)
$$

\noindent both given by the identity on objects. The first sends a map $f:X\to Y$ in $\C$ to the correspondence $X=X\to Y$ and the second sends $Y\to X$ in $\C^{op}$ given by $f$, to $Y\leftarrow X=X$. The canonical functor $\C^{op}\to \mathrm{Spans}_1(\C)$ has a universal property - it is universal with respect to functors out of $\C^{op}$ to an $(\infty,2)$-category and satisfying base-change pullback-pushfoward. More precisely, whenever we have an $\infty$-functor $F:\C^{op}\to \D$ with $\D$ an $(\infty,2)$-category such that 

\begin{enumerate}
\item for every morphism $f:X\to Y$ in $\C$, the 1-morphism $F(f)$ has a right adjoint $f_*$ in $\D$;
\item for every pullback square in $\C$

$$
\xymatrix{
X\ar[r]^f\ar[d]^g& Y\ar[d]^u\\
X'\ar[r]^v& Y'
}
$$

\noindent the canonical $2$-morphism in $\D$

$$
F(v)\circ u_*\to g_*\circ F(f)
$$

\noindent is an equivalence.
\end{enumerate}

\noindent then $F$ extends in a essentially unique way to an $\infty$-functor of $(\infty,2)$-categories $\mathrm{Spans}_1\to \D$, informally defined by sending a correspondence

$$
\xymatrix{
&\ar[dl]_a Z \ar[dr]^b&\\
X&&Y
}
$$

\noindent to the 1-morphism in $\D$ given by $b_*\circ F(a)$.\\

A precise proof of this fact is given in \cite[Part V]{Gaitsgory-Nick-book}. In our case we will be mostly concerned not with functors out of $\mathrm{Spans}_1(\C)$ but instead, with functors with values in $\mathrm{Spans}_1(\C)$. 

We now recall a characterization of the maximal $(\infty,1)$-category inside $\mathrm{Spans}_1(\C)$ (which we will denote as $\C^{\mathrm{corr}}$ for simplicity). For this purpose we have to recall some notation: Let $\D$ be an $(\infty,1)$-category. In this case we can define a new $(\infty,1)$-category $\Tw(D)$ as follows:

\begin{itemize}
\item objects are morphisms in $\D$;
\item a morphism from $u:X\to Y$ to $v:A\to B$ is a commutative diagram

$$
\xymatrix{
X\ar[r]\ar[d]^u &A\ar[d]^v \\
Y&\ar[l] B
}
$$

\end{itemize}

This definition can be made precise and defines a new $(\infty,1)$-category - so called of twisted arrows in $\D$. See \cite[Section 5.2.1]{lurie-ha}. It is also important to remark that the assignment $\C\mapsto \Tw(\C)$ can be seen as an $\infty$-functor

$$
\Tw:\iCat\to \iCat
$$

\noindent which commutes with all small limits \cite[5.2.1.19]{lurie-ha}.

The main reason why we are interested in twisted arrows is the following universal property:

\begin{prop}
\label{label1}
Let $\C$ be an $(\infty,1)$-category with finite limits and let $\D$ be an $(\infty,1)$-category. There is a canonical equivalence between the space of $\infty$-functors $F:\D\to \C^{\mathrm{corr}}$ and the space of $\infty$-functors $\tilde{F}:\Tw(\D)\to \C$ having the property that for any pair of morphisms in $\D$, $f:x\to y$ and $g:y\to z$, the object $\tilde{F}(g\circ f)$ is the fiber product of $\tilde{F}(g)$ and $\tilde{F}(f)$ over $\tilde{F}(\mathrm{Id}_y)$.

\end{prop}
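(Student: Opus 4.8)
The statement compares two descriptions of the $(\infty,1)$-category of correspondences, one intrinsic and one in terms of twisted arrows, so the plan is: fix a complete Segal space model of $\C^{\mathrm{corr}}$, express both mapping spaces as limits over the category of simplices of $\D$ in order to reduce to the case $\D=[n]$, and then match the two sides degreewise by a pasting-lemma argument.

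First I would recall the standard span model for $\C^{\mathrm{corr}}$, following Haugseng \cite{1212.3563}, \cite[Thm~1.1]{1409.0837} and Gaitsgory--Rozenblyum \cite{Gaitsgory-Nick-book}: it is presented by the simplicial space $\mathrm{SPAN}(\C)_{\bullet}$ whose space of $n$-simplices is the full subspace $\Fun^{\mathrm{pb}}(\Tw([n]),\C)$ of $\Fun(\Tw([n]),\C)$ spanned by the functors sending every elementary square of the poset $\Tw([n])$ to a pullback square in $\C$ --- equivalently, those that are right Kan extended from the spine $\{(i,i),(i,i+1)\}$. That $\C$ has finite limits is exactly what makes $\mathrm{SPAN}(\C)_{\bullet}$ satisfy the Segal conditions, and that $\C$ is an $(\infty,1)$-category makes it complete; in low degrees $\mathrm{SPAN}(\C)_{0}\simeq\C^{\simeq}$ and $\mathrm{SPAN}(\C)_{1}$ is the space of spans in $\C$, so $\C^{\mathrm{corr}}$ indeed has the objects of $\C$ and the spans as $1$-morphisms.

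Next I would express each side as a limit over the category of simplices $\Delta_{/\D}$ of $\D$. Using the canonical presentation $\Nerve_{\bullet}\D\simeq\mathrm{colim}_{([n]\to\D)\in\Delta_{/\D}}\Delta^{n}$ of the Rezk nerve by representables, together with completeness of $\mathrm{SPAN}(\C)_{\bullet}$, one gets
$$\Map(\D,\C^{\mathrm{corr}})\simeq\Map_{\mathrm{sSpace}}(\Nerve_{\bullet}\D,\mathrm{SPAN}(\C)_{\bullet})\simeq\lim_{([n]\to\D)}\Fun^{\mathrm{pb}}(\Tw([n]),\C).$$
On the other side I would use Lurie's description of the twisted arrow category \cite[Section 5.2.1]{lurie-ha}: the Rezk nerve $\Nerve_{\bullet}\Tw(\D)$ is the edgewise subdivision $\epsilon^{\ast}\Nerve_{\bullet}\D$, where $\epsilon\colon\Delta\to\Delta$ sends $[n]$ to $[n]^{op}\star[n]\cong[2n+1]$. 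Since precomposition with $\epsilon$ preserves all colimits of simplicial spaces, $\Nerve_{\bullet}\Tw(\D)\simeq\mathrm{colim}_{([n]\to\D)}\Nerve_{\bullet}\Tw([n])$, and hence $\Fun(\Tw(\D),\C)\simeq\lim_{([n]\to\D)}\Fun(\Tw([n]),\C)$. The condition in the statement --- that $\tilde F(g\circ f)$ be the fibre product of $\tilde F(g)$ and $\tilde F(f)$ over $\tilde F(\mathrm{Id}_{y})$ for every composable pair --- only refers to objects and composable pairs of $\Tw(\D)$, each of which is the image under a simplex $[2]\to\D$ of such a configuration in $\Tw([2])$; so it is detected termwise and commutes with this limit. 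It therefore suffices to identify, for each $[n]$ and compatibly with the functoriality over $\Delta_{/\D}$, the subspace $\Fun^{\mathrm{pb}}(\Tw([n]),\C)$ with the subspace of $\Fun(\Tw([n]),\C)$ cut out by the condition of the statement.

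This last step is where I expect the real, if routine, work to lie. Unwinding the description of $\Tw([n])$: its objects are the pairs $i\le j$ in $[n]$, $\mathrm{Id}_{i}=(i,i)$, and for $i\le j\le k$ the composable pair $(i\to j),(j\to k)$ has composite $(i\to k)$ and middle identity $(j,j)$; the canonical maps $(i,k)\to(i,j)$, $(i,k)\to(j,k)$, $(i,j)\to(j,j)$ and $(j,k)\to(j,j)$ in $\Tw([n])$ turn the condition into $\tilde F(i,k)\simeq\tilde F(i,j)\times_{\tilde F(j,j)}\tilde F(j,k)$ for all such triples. An induction on $k-i$ --- splitting $(i,k)$ through $(i,i+1)$ and $(i+1,k)$ and using cancellation and pasting of pullback squares --- shows that these conditions for all triples are equivalent to cartesianness of every elementary square of $\Tw([n])$, i.e. to lying in $\Fun^{\mathrm{pb}}(\Tw([n]),\C)$. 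Being induced by the identity on $\Fun(\Tw([n]),\C)$, this identification is automatically compatible with restriction along the maps $\Tw([m])\to\Tw([n])$ coming from $\Delta_{/\D}$, so passing to the limit gives the desired equivalence. Finally, chasing an element through the identifications shows that the functor $F\colon\D\to\C^{\mathrm{corr}}$ corresponding to $\tilde F$ sends an object $d$ to $\tilde F(\mathrm{Id}_{d})$ and a morphism $f\colon x\to y$ to the correspondence $\tilde F(\mathrm{Id}_{x})\leftarrow\tilde F(f)\to\tilde F(\mathrm{Id}_{y})$ built from the canonical maps $f\to\mathrm{Id}_{x}$ and $f\to\mathrm{Id}_{y}$ in $\Tw(\D)$, as expected.
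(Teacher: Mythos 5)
The paper does not actually prove Proposition~\ref{label1}: immediately after the statement it remarks that the result ``can be used as a definition of $\C^{\mathrm{corr}}$'' and defers entirely to the literature (\cite{1301.4725}, \cite{1409.0837}, and the appendix of \cite{Samthesis}). So there is no in-text argument to compare yours against; what you have written is a reconstruction of the standard proof from those references, and it is essentially correct. Your route --- model $\C^{\mathrm{corr}}$ by the complete Segal space $[n]\mapsto\Fun^{\mathrm{pb}}(\Tw([n]),\C)$, decompose both mapping spaces as limits over $\Delta_{/\D}$ using the density presentation of the nerve and the edgewise-subdivision description $\Nerve_{\bullet}\Tw(\D)\simeq\epsilon^{*}\Nerve_{\bullet}\D$, and then check degreewise that the fibre-product condition of the statement is equivalent to cartesianness of the elementary squares of $\Tw([n])$ by pasting of pullbacks --- is exactly the Barwick--Haugseng approach, and the combinatorial endgame (identifying the square on $(i,k),(i,j),(j,k),(j,j)$ with the configuration $g\circ f$, $f$, $g$, $\mathrm{Id}_y$, and inducting on $k-i$) is right.

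Two points would need a little more care in a complete write-up. First, the step $\Fun(\Tw(\D),\C)\simeq\lim_{([n]\to\D)}\Fun(\Tw([n]),\C)$ rests on the decomposition $\Nerve_{\bullet}\D\simeq\mathrm{colim}_{\Delta_{/\D}}\Delta^{n}$ being a \emph{homotopy} colimit in whatever model you are using, so that mapping out really produces a homotopy limit; this is the statement that every $\infty$-category is the colimit of its simplices, which is true but should be cited rather than treated as formal. Second, the claim that the condition of the statement ``is detected termwise and commutes with this limit'' deserves one more sentence: you need that every object and every composable pair of $\Tw(\D)$ is, up to equivalence, in the image of some $\Tw([2])\to\Tw(\D)$ coming from a $2$-simplex of $\D$, and conversely that a compatible family lying in each $\Fun^{\mathrm{pb}}(\Tw([n]),\C)$ assembles to a functor satisfying the global condition. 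Both are true and easy, but they are the hinge on which the termwise reduction turns. With those caveats, your argument is a sound substitute for the proof the paper omits.
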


\begin{remark}
A dual result holds when we replace correspondences by co-correspondences. This corresponds to replacing $\C$ by $\C^{op}$.
\end{remark}

This result can be used as a definition of $\C^{\mathrm{corr}}$. See \cite{1301.4725} and more recently \cite{1409.0837} The result also appears in the appendix of \cite{Samthesis}.

For our purposes we will need a monoidal upgrade of the previous proposition: As $\C$ admits finite limits, $\C^{\mathrm{corr}}$ acquires a symmetric monoidal structure (remark that it won't be a cartesian monoidal structure). Let us denote it as $\C^{\mathrm{corr},\otimes_{\times}}$. Let now $\D$ have a symmetric monoidal structure $\Dmonoidal$. Then \cite[5.2.2.23]{lurie-ha} shows that $\Tw(\D)$ inherits a symmetric monoidal structure induced from $\D$, $\Tw(\D)^{\otimes}$. Objectwise it corresponds to the tensor product of $1$-arrows in $\D$ \footnote{The existence of this monoidal structure is easily deduced from the fact that $\Tw$ commutes with products and therefore sends algebras to algebras $\Tw:\CAlg(\iCat)\to \CAlg(\iCat)$}. At the same time the construction mapping a category with products to its category of correspondences can also be interpreted as an $\infty$-functor $(-)^{\mathrm{corr}}:\iCat^{prod}\to \iCat$ which also commutes with products and therefore sends algebras to algebras \todo{Reconfirm all this. I don't remember this story.}. Prop. \ref{label1} can now be understood as saying that the constructions $\Tw$ and $(-)^{\mathrm{corr}}$ are adjoint\footnote{More precisely, one can proceed as in \cite{1409.0837} and see that $\Tw$ has a right adjoint and $(-)^{\mathrm{corr}}$ is a sub-functor of this adjoint.}. By this discussion, the adjunction extends to algebras and we have the following corollary:

\begin{cor}
\label{label2}
Let $\C$ be an $(\infty,1)$-category with finite limits and let $\Dmonoidal$ be a symmetric monoidal $(\infty,1)$-category. There is a canonical equivalence between the space of monoidal $\infty$-functors $F:\Dmonoidal\to \C^{\mathrm{corr},\otimes_{\times}}$ and the space of monoidal $\infty$-functors $\tilde{F}:\Tw(\D)^{\otimes}\to \C^{\times}$ having the property that for any pair of morphisms in $\D$, $f:x\to y$ and $g:y\to z$, the object $\tilde{F}(g\circ f)$ is the fiber product of $\tilde{F}(g)$ and $\tilde{F}(f)$ over $\tilde{F}(\mathrm{Id}_y)$. Here $\C^{\times}$ denotes the cartesian monoidal structure.
\end{cor}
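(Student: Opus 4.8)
The plan is to deduce the monoidal statement formally from Proposition \ref{label1} by upgrading the adjunction it expresses to an adjunction of symmetric monoidal $\infty$-categories, and then invoking the general principle that a right adjoint between cartesian-structured settings automatically induces an equivalence on commutative algebra objects compatible with the monoidal refinements. First I would make precise the two functors involved. On one side we have $\Tw(-)^{\otimes}\colon \CAlg(\iCat^{\mathrm{prod}})\to \CAlg(\iCat)$ — here one uses \cite[5.2.2.23]{lurie-ha} to see that for a symmetric monoidal $\D$ the category $\Tw(\D)$ inherits a symmetric monoidal structure $\Tw(\D)^{\otimes}$, objectwise the tensor of $1$-arrows, and the fact (cited from \cite[5.2.1.19]{lurie-ha}) that $\Tw$ commutes with all small limits, in particular with products, so it preserves commutative algebra structure. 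On the other side, for $\C$ with finite products, the assignment $\C\mapsto \C^{\mathrm{corr}}$ with its symmetric monoidal structure $\C^{\mathrm{corr},\otimes_{\times}}$ is functorial in product-preserving functors and itself commutes with products, hence also lifts to commutative algebras.

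The key step is to argue that the equivalence of spaces in Proposition \ref{label1} is natural enough to be promoted to the level of symmetric monoidal functors. Following the footnote, $\Tw$ has a genuine right adjoint $R$ in $\iCat$, and $(-)^{\mathrm{corr}}$ is the sub-functor of $R$ carving out those objects/morphisms satisfying the Segal-type fiber-product condition appearing in the statement. Both $\Tw$ and $(-)^{\mathrm{corr}}$ preserve finite products, so by the standard fact that a product-preserving adjunction induces an adjunction on commutative monoid/algebra objects (apply $\CAlg(-)=\Fun^{\otimes}(\mathrm{Fin}_*,-)$ or equivalently $\Fun^{\mathrm{lax}}(\mathrm{triv},-)$; a right adjoint that preserves products sends algebras to algebras, and the mapping-space equivalence is inherited), we get a natural equivalence
$$
\Map_{\CAlg(\iCat)}^{\otimes}\bigl(\Tw(\D)^{\otimes},\C^{\times}\bigr)\;\simeq\;\Map^{\otimes}_{\CAlg(\iCat^{\mathrm{prod}})}\bigl(\Dmonoidal,\C^{\mathrm{corr},\otimes_{\times}}\bigr),
$$
where on the left one restricts to those lax/strong monoidal functors whose underlying functor satisfies the stated fiber-product condition — and, as in the non-monoidal case, this condition is exactly what pins down the image as landing in $\C^{\mathrm{corr},\otimes_{\times}}\subseteq R(\C)^{\otimes}$ rather than in the full right adjoint. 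Unwinding, a monoidal functor $\tilde F\colon \Tw(\D)^{\otimes}\to \C^{\times}$ with the fiber-product property corresponds precisely to a monoidal functor $F\colon \Dmonoidal\to \C^{\mathrm{corr},\otimes_{\times}}$, which is the assertion.

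The main obstacle I anticipate is not the formal adjunction bookkeeping but the careful identification of the symmetric monoidal structures: one must check that the symmetric monoidal structure $\Tw(\D)^{\otimes}$ produced by \cite[5.2.2.23]{lurie-ha} is indeed the one "adjoint" to $\otimes_{\times}$ on correspondences — i.e.\ that under the adjunction $\Tw\dashv R$ the $\otimes_{\times}$-structure on $\C^{\mathrm{corr}}$ transports to exactly the objectwise tensor on $\Tw(\D)$, with matching coherence data. Concretely this amounts to verifying that the counit and unit of $\Tw\dashv (-)^{\mathrm{corr}}$ are compatible with products in a way that identifies the two Day-convolution-type or cartesian-type refinements; this is where one genuinely uses that $\C^{\mathrm{corr},\otimes_{\times}}$ is \emph{not} cartesian while $\C^{\times}$ is, so the fiber-product (Segal) condition on $\tilde F$ is doing real work in translating a functor to a cartesian target into a functor to a non-cartesian target. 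Once that compatibility is pinned down — and I would cite \cite{1409.0837} for the precise construction of $\otimes_{\times}$ and its universal property — the corollary follows by applying $\CAlg(-)$ to Proposition \ref{label1} verbatim.
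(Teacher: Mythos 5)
Your proposal follows essentially the same route as the paper: the paper likewise deduces the corollary by observing that both $\Tw$ and $(-)^{\mathrm{corr}}$ commute with products and hence send algebras to algebras, reinterprets Proposition \ref{label1} as an adjunction (with $(-)^{\mathrm{corr}}$ a subfunctor of the right adjoint of $\Tw$, as in \cite{1409.0837}), and passes the adjunction to $\CAlg(\iCat)$. Your additional remarks on matching the two monoidal refinements are a reasonable elaboration of the same argument rather than a different method.
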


\begin{remark}
Again replacing $\C$ by $\C^{op}$ (if $\C$ has pushouts) we can replace correspondences by co-correspondences.
\end{remark}

We will now discuss how this result allows us to describe algebras in correspondences. To start with, let $\Opmonoidal$ be a $\infty$-operad in spaces. For the moment, let us suppose that $\Opmonoidal$ has a unique color so that we can think of $\Opmonoidal$ in a more traditional form as a family of spaces $\{\Op(n)\}_{n\in \mathbb{N}}$ together with certain operations. In this case, intuitively, an $\Opmonoidal$-algebra in $\C^{\mathrm{corr}}$ consists of an object $X\in \C^{\mathrm{corr}}$ together with operations $t_n(\sigma)\in \Map_{\C^{\mathrm{corr}}}(X^n, X)$, indexed by $\sigma\in \Op(n)$. These operations are required to satisfy some coherence conditions encoded in the fact that the assignments

$$
t_n: \Op(n)\to  \Map_{\C^{\mathrm{corr}}}(X^n, X)
$$

\noindent form a map of operads up to coherent homotopies. To give a formal definition, we will see the spaces $\Op(n)$ as mapping spaces in a multicategory with a single color. This is the strategy of \cite{lurie-ha}. In this case the data of an $\Opmonoidal$ algebra in $\C^{\mathrm{corr}}$ is given as a commutative diagram that preserves inert morphisms:

\begin{equation}
\label{eq-formatofalgebrasincorrespondeces2}
\xymatrix{
\Opmonoidal\ar[dr]\ar[rr]&& \C^{\mathrm{corr},\otimes_\times}\ar[dl]\\
&\Nerve(\Fin)&
}
\end{equation}

\noindent and we recover the object $X$ as the image of the unique color of $\Opmonoidal$. We will assume that the reader is familiar with this language. We will also assume that $\Opmonoidal$ is a unital $\infty$-operad (recall that this means that the space of nullary operations is contractible).\\

As shown in \cite[2.2.4.9]{lurie-ha}, the functor that sees a symmetric monoidal $(\infty,1)$-category as an $\infty$-operad has a left adjoint: if $\Opmonoidal$ is an $\infty$-operad, then we can define a symmetric monoidal $(\infty,1)$-category - the \emph{symmetric monoidal envelope} of $\Opmonoidal$, which we will denote as $\Env(\Op)^{\otimes}$. Explicitely, if we model $\infty$-operads as $(\infty,1)$-categories over $\Nerve(\Fin)$, then $\Env(\Op)^{\otimes}$ is the pullback $\Opmonoidal\times_{\Nerve(\Fin)} \mathrm{Act}(\Nerve(\Fin))$ so that $\Env(\Op)^{\otimes}_{\onefin}$ is the subcategory $\Op_{act}^{\otimes}$ of $\Opmonoidal$ spanned by all objects and active morphisms between them (see \cite[2.2.4.1, 2.2.4.3]{lurie-ha}).

In this case, to give an $\Op$-algebra in $\C^{\mathrm{corr},\otimes_\times}$ is the same as giving a monoidal functor 

$$
\Env(\Op)^{\otimes}\to \C^{\mathrm{corr},\otimes_\times}
$$

But now, using \ref{label2}, this corresponds to the data of a strongly monoidal functor 

$$
\Tw(\Env(\Op))^{\otimes}\to \C^{\times}
$$

\noindent such that the underlying functor $\Tw(\Env(\Op))\to \C$ satisfies the pullback condition. By \cite[Prop. 2.4.1.7-(2)]{lurie-ha}, this corresponds to an $\infty$-functor

$$
\Tw(\Env(\Op))^{\otimes}\to \C
$$

\noindent satisfying the conditions of a weak Cartesian structure (see \cite[Def. 2.4.1.1]{lurie-ha}).\\

Our main interest is the case of co-correspondences. Suppose that $\C$ admits finite colimits. Then we can apply this discussion to $\C^{op}$ and the data of an $\Op$-algebra in $\C^{\mathrm{co-corr}}:=(\C^{op})^{\mathrm{corr}}$ is equivalent to the data of an $\infty$-functor

\begin{equation}
\label{eq-formatofalgebrasincorrespondeces}
\Tw(\Env(\Op))^{\otimes}\to \C^{op}
\end{equation}

\noindent satisfying the conditions of weak Cartesian structure in $\C^{op}$ and the conditions of the Corollary \ref{label2}. Recall that by definition,  we have $\Env(\Op)\simeq \Op^{\otimes}_{act}$ so that we can identify objects of $\Tw(\Env(\Op))^{\otimes}$ with sequences $(\nfin, \sigma_1:X_1\to Y_1,..., \sigma_n:X_n\to Y_n)$ of active morphisms in $\Opmonoidal$.

\subsubsection{Construction of Brane Actions}
We can now use the description of algebras in correspondences (\ref{eq-formatofalgebrasincorrespondeces2}) as functors (\ref{eq-formatofalgebrasincorrespondeces}) to construct brane actions. In this section we work with $\infty$-operads in spaces and in the next section we will extend these results to $\infty$-operads enriched in a topos. Let us introduce some notation. Let $\Opmonoidal$ be a unital $\infty$-operad and let $\sigma: X\to Y$ be an active morphism. An extension of $\sigma$ consists of an object $X_0\in \Op^{\otimes}_{\onefin}$ together with an active morphism $\tilde{\sigma}:X\oplus X_0\to Y$ such that the restriction to $X$ recovers $\sigma$. If $p$ denotes the structural projection $\Opmonoidal\to \Nerve(\Fin)$, then the canonical map $X\to X\oplus X_0$ is defined over the inclusion $\nfin:=p(X)\to \langle n+1\rangle$ that misses a single element in $\langle n+1\rangle$. The collection of extensions of $\sigma$ can be organized in a $(\infty,1)$-category $\Ext(\sigma)$. See \cite[Def. 3.3.1.4]{lurie-ha}.

\begin{remark}
\label{label3}
In the case $\Opmonoidal$ is monochromatic with color $c$, $\Op^{\otimes}_{\onefin}$ is an $\infty$-groupoid and $\sigma$ is an active map $(c,...,c)\to c$ over $\nfin\to \onefin$, $\Ext(\sigma)$ is a space and it is equivalent to the fiber over $\sigma\in \Op(n)$ of the map $\Op(n+1)\to \Op(n)$ obtained by forgetting the last input. This makes sense because the operad is unital. Here $\Op(n):=\Map^f_{\Opmonoidal_{act}}((c,...,c), c)$. In the case where $\Op(1)$ is contractible, we have $\Ext(\mathrm{Id}_c)\simeq \Op(2)$
\end{remark}

We now recall the notion of a coherent $\infty$-operad \cite[3.3.1.9]{lurie-ha}:

\begin{defn}
Let $\Opmonoidal$ be a $\infty$-operad. We say that $\Opmonoidal$ is coherent if:

\begin{enumerate}
\item $\Opmonoidal$ is unital;
\item The underlying $(\infty,1)$-category $\Opmonoidal_{\onefin}$ of $\Opmonoidal$ is a Kan Complex;
\item Suppose we are given two composable active morphisms in $\Opmonoidal$

$$
\xymatrix{
X\ar[r]_{f}& Y\ar[r]_{g}& Z
}
$$

Then the commutative diagram

$$
\xymatrix{
\Ext(\mathrm{Id}_Y)\ar[d]\ar[r]& \Ext(g)\ar[d]\\
\Ext(f)\ar[r]&\Ext(g\circ f)
}
$$

\noindent is a pushout.
\end{enumerate}
\end{defn}

\begin{thm}(Toën)
\label{thmbranes}
Let $\C=\Spaces$ be the $(\infty,1)$-category of spaces. Let $\Opmonoidal$ be a unital coherent monochromatic $\infty$-operad with a unique color $c$ and $\Op(1)\simeq *$ \footnote{Recall that the of being unital condition is equivalent to say that $\Op(0)$ is a contractible space}. Then the space $\Ext(\mathrm{Id}_c)\simeq \Op(2)$ is an $\Op$-algebra in $\Spaces^{\mathrm{co-corr}}$. More precisely, there exists a map of $\infty$-operads

\begin{equation}
\xymatrix{
\Opmonoidal\ar[dr]\ar[rr]&& \Spaces^{\mathrm{co-corr}, \otimes_{\coprod}}\ar[dl]\\
&\Nerve(\Fin)&
}
\end{equation}

\noindent sending the unique color of $\Opmonoidal$ to the space $\Ext(\mathrm{Id}_c)$.

\end{thm}

This theorem is proved in \cite[Thm 0.1]{1307.0405} using non-trivial stricification arguments. Here we suggest an alternative proof that avoids those arguments and gives a more conceptual explanation. Moreover, this new strategy will be very useful throughout the rest of this paper.\\

As discussed above, we are reduced to construct an $\infty$-functor 

\begin{equation}
\label{eq-formatofalgebrasincorrespondeces3}
\Tw(\Env(\Op))^{\otimes}\to \Spaces^{op}
\end{equation}

\noindent sending the identity map $\mathrm{Id}_c:c\to c$ seen as an object of $\Tw(\Env(\Op))^{\otimes}$ to the space $\Ext(\mathrm{Id}_c)$, an object $\sigma:(c,c,...,c)\to c$ in $\Tw(\Env(\Op))^{\otimes}$ to the space $\Ext(\sigma)$, and satisfying certain conditions.

\begin{proof}[Proof of Theorem \ref{thmbranes}:]
To produce the functor (\ref{eq-formatofalgebrasincorrespondeces3}), we can use the Grothendieck construction and instead, construct a right fibration

$$
\xymatrix{
\BO\ar[d]^{\pi}\\
\Tw(\Env(\Op))^{\otimes}
}
$$
\noindent such that the fiber over an object $\sigma:(c,c,...,c)\to c$ in $\Tw(\Env(\Op))^{\otimes}$ is the space $\Ext(\sigma)$. We construct it as follows: Start with the source map

$$
\xymatrix{
\Fun(\Delta[1], \Tw(\Env(\Op))^{\otimes})\ar[d]^{ev_0}\\
\Tw(\Env(\Op))^{\otimes}
}
$$

\noindent which we know to be a cartesian fibration via the composition of morphisms (see \cite[2.4.7.5, 2.4.7.11]{lurie-htt}). We let $\BO$ denote the (non-full) subcategory of $\Fun(\Delta[1], \Tw(\Env(\Op))^{\otimes})$ defined as follows:

\begin{enumerate}
\item its objects are those twisted morphisms 

$$
\xymatrix{
\sigma:=(\nfin, \sigma_1:X_1\to Y_1,..., \sigma_n:X_n\to Y_n)\ar[r]^(0.65){f} & \delta:=(\onefin, \delta:U\to V)
}
$$

\noindent over the unique active map $\nfin\to \onefin$ such that the corresponding twisted arrow

$$
\xymatrix{
\bigoplus_{i\in \nfin^\circ} X_i\ar[r]^(0.6){x}\ar[d]^{\oplus_{i\in \nfin^\circ}\sigma_i}& U\ar[d]^{\delta}\\
\bigoplus_{i\in \nfin^\circ} Y_i&\ar[l]^(0.4){y} V
}
$$

\noindent satisfies the following two conditions:

\begin{enumerate}
\item the active map $x:\bigoplus_{i \in \nfin^\circ} X_i\to U$ is semi-inert in $\Opmonoidal$ and is defined over one of the maps 
$$\langle m \rangle:=p(\bigoplus_{i\in \nfin^\circ} X_i)\to \langle m+1 \rangle$$

\noindent  corresponding to an inclusion that misses a single $\langle m+1\rangle$;\\

\item the map $y$ is an equivalence.\\
\end{enumerate}

\item A morphism in $\Fun(\Delta[1], \Tw(\Env(\Op))^{\otimes})$ over a morphism 

$$
\xymatrix{
\lambda:=(\langle \gamma \rangle, \lambda_1:A_1\to B_1,...,\lambda_\gamma: A_\gamma\to B_\gamma)\ar[r]^(0.8){g}& \sigma
}
$$

\noindent in $\Tw(\Env(\Op))^{\otimes}$ is a commutative square

$$
\xymatrix{
(\onefin, \omega: W\to Z)\ar[r]^(0.7){h} & \delta\\
\lambda\ar[r]^g\ar[u]^t& \sigma \ar[u]^f
}
$$

\noindent over

$$
\xymatrix{
\onefin\ar[r]^{id} & \onefin\\
\langle \gamma \rangle \ar[r]^g\ar[u]^{p(t)}& \nfin \ar[u]^{p(f)}
}
$$

\noindent such that

\begin{enumerate}
\item both $t$ and $f$ satisfy the conditions of item 1);
\item in the induced diagram

$$
\xymatrix{
W\ar[rr]^h && U\\
\ar[u]\bigoplus_{\alpha\in \langle \gamma \rangle^\circ} A_\alpha \simeq \bigoplus_{i\in \nfin^\circ} \bigoplus_{j\in g^{-1}\{i\}} A_j\ar[rr]^(0.7){g} && \bigoplus_{i\in \nfin^\circ}X_i\ar[u]
}
$$

\noindent the map $h$ sends the unique element $p(W)-p(\bigoplus_{\alpha\in \langle \gamma \rangle^+} A_\alpha)$ to the missing element in $\langle m+1 \rangle$.

\end{enumerate}

\end{enumerate}

It follows now from \cite[Def. 3.3.1.4]{lurie-ha} that the fiber of the composite $\pi:\BO\subseteq \Fun(\Delta[1], \Tw(\Env(\Op))^{\otimes})\to \Tw(\Env(\Op))^{\otimes}$ over an object $\sigma:=(\nfin, \sigma_1:X_1\to Y_1,..., \sigma_n:X_n\to Y_n)$ is the space $\Ext(\bigoplus_{i\in \nfin^\circ}\sigma_i)\simeq \coprod_{i\in \nfin^\circ} \Ext(\sigma_i)$. We remark now that $\pi$ remains a cartesian fibration under the composition of twisted morphisms. We will check it in the case when we have a single active morphism as the general case can easily be reduced to this one. Let $\sigma:X\to Y$ be an active map, and $\tilde{\sigma}:X'\to Y$ be an extension of $\sigma$ with $X\to X'$ semi-inert over the inclusion $\nfin\to \langle n+1 \rangle$ that misses a single element $a_i\in\langle n+1 \rangle^\circ$. Consider a 

$$
\xymatrix{
U\ar[r]^{g_1}\ar[d]^{\lambda}& X\ar[d]^{\sigma}\\
V&\ar[l]^{g_2} Y
}
$$

\noindent  a twisted morphism from an active morphism $\lambda$ to $\sigma$. We remark that as the operad has a unique color and $\Op(1)$ is contractible, for any $U\to U'$ semi-inert over an inclusion that misses a single element  $p(U):=\langle k \rangle \to \langle k+1 \rangle$, the space of factorizations

\begin{equation}
\label{cartesianbraneproof}
\xymatrix{
U'\ar@{-->}[rr]^-{h}&& X\\
U\ar[rr]^-{g_1} \ar[u]&& X\ar[u]\\
}
\end{equation}

\noindent where  $h$ satisfies the conditions in (2)-b), is contractible.  Indeed, the definition of $\infty$-operad together with the condition 2-b) tells us that

$$
\Map^{p(h)}_{\Opmonoidal}(U', X')\simeq \Map^{p(g_1)}_{\Opmonoidal}(U, X)\times \Op(1)\simeq \Map_{\Opmonoidal}(U, X)
$$

\noindent showing that $h$ is essentially unique once $g_1$ is given. Moreover, the same argument tells us also that as the operad has a unique color and $\Op(1)$ is contractible, all semi-inert morphisms $U\to U'$ over an inclusion that miss a single element $\langle k \rangle \to \langle k+1\rangle$ are equivalent in a canonical contractible way via the permutations that change the choice of the missing elements. 
This implies that $\pi$ is a cartesian fibration.\\

To conclude we have to show that the functor associated to $\pi$

 $$
\Tw(\Env(\Op))^{\otimes}\to \Spaces^{op}
$$

\noindent  satisfies 1) the condition of the Corollary \ref{label2} and 2) it is a weak Cartesian structure in $\Spaces^{op}$ (in the sense of \cite[2.4.1.1]{lurie-htt}. But this is exactly where the coherence conditon plays its role: 1) is equivalent to the definition of coherent $\infty$-operad and  2) follows from the fact that $\Ext((\sigma_1,..., \sigma_n))\simeq \Ext(\oplus_i \sigma_i)\simeq \coprod_i \Ext(\sigma_i)$
 (this is clear from the construction and from the definition of $\Ext(-)$ in \cite[3.3.1.4, 3.3.1.7]{lurie-ha}). In particular we have $\Ext(\mathrm{Id}_{(c,....c)})\simeq\coprod_n \Ext(\mathrm{Id}_c)$.

\end{proof}

\vspace{1cm}

\begin{remark}
\label{contractibleO1}
It follows from the cartesian morphisms in the theorem that co-correspondence 

$$
\coprod_n \Ext(\mathrm{Id}_c)  \rightarrow \Ext(\sigma)\leftarrow \Ext(\mathrm{Id}_c)
$$

\noindent induced by $\sigma\in \Op(n)$ can be canonically identified with the pullback of the diagram considered in \cite[Thm 0.1]{1307.0405}:

$$
\xymatrix{
\coprod_n \ar[dr] \Op(2)\times \Op(n)\ar[r] &\ar[d] \Op(n+1)& \ar[l] \Op(2)\times \Op(n)\ar[dl]\\
&\Op(n)&
}
$$

\noindent along the map $\sigma:*\to \Op(n)$. See the Remark \ref{label3}. Following \cite[Prop. 3.5]{1307.0405}, we say that a monochromatic unital $\infty$-operad in spaces $\Opmonoidal$ is of \emph{configuration type} if for every $n\geq 2$ and $m \geq 2$, the natural composition diagram

$$
\label{configurationtypediagram}
\xymatrix{
\Op(n)\times \Op(m+1)\coprod_{\Op(2)\times \Op(n)\times \Op(m)} \Op(n+1)\times \Op(m)\ar[d]\ar[r]& \Op(n+m)\ar[d]\\
\Op(n)\times \Op(m)\ar[r]& \Op(n+m-1)
}
$$

\noindent is a pullback. Notice that in this case the condition of  $\Opmonoidal$ being of configuration type is equivalent to being coherent - the compatibility between the spaces of extensions of two operations $\sigma:*\to \Op(n)$ and $\rho:*\to \Op(m)$ is obtained by taking the fibers of the diagram \ref{configurationtypediagram} over the map $\sigma\times \rho:*\to \Op(n)\times \Op(m)$. 
\end{remark}

\begin{example}
As shown in \cite[Thm 5.1.1.1]{lurie-ha} the $\infty$-operads $E_n^{\otimes}$ are coherent. In this case, for $\sigma\in E_n(k)$, the space $\Ext(\sigma)$ is equivalent to a wedge $\vee_k S^{n-1}$ and the brane action is given by the usual cobordism-style action. When $n=1$, the co-span

$$
\coprod_k S^1\to \vee_k S^1 \leftarrow S^1
$$

\noindent can be identified with the usual pants with $k$-legs.
\end{example}

\subsubsection{Functoriality of Extensions}
\label{functoriality}

Let $F:\Opmonoidal\to \Op^{`\otimes}$ be a map of $\infty$-operads and suppose both $\Opmonoidal$ and $\Op^{`\otimes}$ are unital monochromatic with $\Op(1)\simeq \Op(1)'\simeq *$. Then we have a natural morphism of right fibrations induced by $F$

\begin{equation}
\label{equation-branesfunctorial}
\xymatrix{
\BO\ar[d]^\pi \ar[r]^F & \mathrm{B}\Op'\ar[d]^{\pi'}\\
\Tw(\Env(\Op))^{\otimes}\ar[r]^F& \Tw(\Env(\Op'))^{\otimes}
}
\end{equation}

Indeed, as we know, both constructions $\Tw$ and $\Env$ are functorial. As the construction $\Fun(\Delta[1], -)$ is also functorial and the source map $ev_0$ is a natural transformation, we know that $F$ induces a commutative diagram of right fibrations

\begin{equation}
\label{mapbranes}
\xymatrix{
\Fun(\Delta[1], \Tw(\Env(\Op))^{\otimes}) \ar[d]^{ev_0} \ar[r]^F &\Fun(\Delta[1], \Tw(\Env(\Op'))^{\otimes})\ar[d]^{ev_0}\\
\Tw(\Env(\Op))^{\otimes}\ar[r]^F& \Tw(\Env(\Op'))^{\otimes}
}
\end{equation}

\noindent More generally, the same argument gives us an $\infty$-functor $\mathrm{B}:\mathrm{Op}_{\infty}^{*}\to \Fun^{rf}(\Delta[1], \iCat)$ where $\mathrm{Op}_{\infty}^{*}$ is the full subcategory of $\mathrm{Op}_{\infty}$ spanned by the $\infty$-operads satisfying the conditions at the beginning of this section and $ \Fun^{rf}(\Delta[1], \iCat)$  is the full subcategory of  $\Fun(\Delta[1], \iCat)$  spanned by those functors that are right fibrations.  We are now left to check that if $F$ is a map of operads then the induced map of right fibrations \ref{mapbranes} sends the full subcategory $\BO$ to $\mathrm{B}\Op'$, or in other words, $F$ preserves the conditions $(1)$ and $(2)$ in the proof of \ref{thmbranes}. But this follows immediately from the fact $F$ is a map of $\infty$-operads and maps of $\infty$-operads preserve semi-inert morphisms. This follows from the fact inert edges are cocartesian by definition. It also follows from the proof of the Thm. \ref{thmbranes} that $F$ sends $\pi$-cartesian edges to $\pi'$-cartesian edges.

\subsubsection{Some Examples and Remarks}

\begin{remark}(From Co-spans to Spans)
\label{cospanstospans}
Let $\Opmonoidal$ be a unital coherent monochromatic $\infty$-operad with color $c$. Then the Theorem \ref{thmbranes} tells us that the space $\Ext(\mathrm{Id}_c)$ is an $\Op$-algebra in $\Spaces^{\mathrm{co-corr},\otimes_{\amalg}}$. Fix now $X$ a space . Then we have a functor $\Map(-,X):\Spaces^{op}\to \Spaces$. We can use the definition of correspondences to see that this functor produces an $\infty$-functor $(\Spaces^{\mathrm{co-corr}})^{op}\to \Spaces^{\mathrm{corr}}$ which is monoidal with respect to the opposite of $\otimes_{\amalg}$ on $\Spaces^{\mathrm{co-corr}}$ and $\otimes_{\times}$ on $\Spaces^{\mathrm{corr}}$\footnote{Notice that if $\mathcal{C}$ is a category with finite products, the operation $\times$ induces a symmetric monoidal structure in correspondences in $\mathcal{C}$ but this structure is no longer cartesian as it does not verify the required universal property for the description of the mapping spaces to a product.}. In this case the space $\Map(\Ext(\mathrm{Id}_c), X)$ becomes an $\Op$-algebra in $\Spaces^{\mathrm{corr}}$ by means of the composition

$$
\Tw(\Env(\Op))^{\otimes}\to \Spaces^{op}\to \Spaces
$$

\end{remark}

\begin{example}($n$-Loop Stacks)
Let $\C$ be the $\infty$-topos of derived stacks over a field of characteristic zero, considered with the cartesian structure. See  \cite{toen-vezzosi-hag2,MR3285853}. As $\C$ is presentable we have a canonical monoidal, colimit-preserving $\infty$-functor

$$
\Spaces^{\times}\to \C^{\times}
$$

By the universal property of correspondences, this functor provides an $\infty$-functor

$$
\Spaces^{\mathrm{co-corr}, \otimes_{\amalg}}\to \C^{\mathrm{co-corr}, \otimes_{\amalg}}
$$

In particular, for any unital coherent monochromatic  $\infty$-operad (in spaces) $\Opmonoidal$, we can consider $\Ext(\mathrm{Id}_c)$ as an $\Op$-algebra in $\C^{\mathrm{co-corr}, \otimes_{\amalg}}$ via the composition.

$$
 \Opmonoidal\to \Spaces^{\mathrm{co-corr}, \otimes_{\amalg}}\to \C^{\mathrm{co-corr}, \otimes_{\amalg}}
  $$
   
   In this case, for any derived stack $X$, the mapping stack $\Map(\Ext(\mathrm{Id}_c), X)$ becomes an $\Op$-algebra in $\C^{\mathrm{corr}, \otimes_{\times}}$. In particular, when $\Opmonoidal=E_n^{\otimes}$, as $E_n(2)\simeq S^{n-1}$, we find that the mapping stack $\Map(S^{n-1}, X)$ becomes and $E_n$-algebra in correspondences.
\end{example}

\begin{example}

At the same time, when working with derived stacks which have compactly generated derived categories of quasi-coherent sheaves, $\Qcoh$ provides a monoidal $\infty$-functor $(\dst)^{op}\to \dgcontk$ where $\dgcont$ is the $(\infty,2)$-category of $k$-linear presentable dg-categories together with continuous functors as 1-morphisms. For nice enough derived stacks, such as the notion of perfect stacks introduced in \cite[Section 3.1]{MR2669705},  this functor factors through the sub-category of compactly generated dg-categories and satisfies the pullback-pushfoward base-change and the $(\infty,2)$-monoidal universal property of correspondences tells us that $\Qcoh$ factors as a monoidal functor

$$\mathrm{Spans}_1(_{nice}\dst)\to \dgcontk$$

We restrict to the maximal $(\infty,1)$-categories and obtain a monoidal functor

$$
_{nice}\dst^{\mathrm{corr}}\to \dgcontk
$$

As a corollary of the theorem and the previous example we deduce that if $X$ is a nice enough stack, the dg-category $\Qcoh(\Map(S^n, X))$ is an $E_{n+1}$-monoidal dg-category, thus recovering by a different method the result of \cite[Section 6]{MR2669705}. In \cite[Corollary 5.4]{1307.0405} this is used the prove higher formality. 
\end{example}

\subsection{Brane actions for $\infty$-operads in a $\infty$-topos}
\label{branesfortopos}
\subsubsection{\,}
We want to be able to work with operads enriched in derived stacks. The first task is to define what these objects are. Thanks to the works of  \cite{MR2366165, 1305.3658,MR2805991, 1606.03826, MR3100887, MR3100888} we can model $\infty$-operads in the sense of Lurie \cite{lurie-ha}, using the category of non-planar rooted trees $\Omega$, either via dendroidal sets (i.e. presheaves on $\Omega$) or dendroidal Segal spaces, i.e. $\infty$-functors $\Nerve(\Omega)^{op}\to \Spaces$ satisfying a local condition with respect to certain Segal maps. We use this as an inspiration to define $\infty$-operads in a hypercomplete topos. \footnote{Recall from \cite{lurie-htt} that hypercomplete topos can always be described as $\infty$-sheaves over a site.}

\begin{defn}
Let $\T$ be hypercomplete $\infty$-topos. The $(\infty,1)$-category of $\infty$-operads in $\T$ is 
$$
\mathrm{Op}_{\infty}(\T):= \Fun^{\mathrm{Segal}}(\Nerve(\Omega^{op}), \T)
$$
\end{defn}

Of course, when $\T=\Spaces$, the comparison theorem of \cite{1305.3658, 1606.03826}  tells us that $\mathrm{Op}_{\infty}(\Spaces)\simeq \mathrm{Op}_{\infty}$ and in this case, when $\T$ is the topos of sheaves in a $\infty$-site $(\C,\tau)$, we have 

$$
\mathrm{Op}_{\infty}(\T):= \Fun^{\mathrm{Segal}}(\Nerve(\Omega^{op}), \Sh(\C))\simeq \Fun^{\mathrm{Segal}, \tau}(\Nerve(\Omega^{op})\times \C^{op}, \Spaces)\simeq \Sh(\C, \mathrm{Op}_{\infty})
$$

\noindent so that, according to our definition here, an operad in $\T$ is just a sheaf of $\infty$-operads on the site $\C$. Equivalently, one can also describe $\mathrm{Op}_{\infty}(\T)$ as the $(\infty,1)$-category of limit preserving functors $\T^{op}\to \mathrm{Op}_{\infty}$: when $\T$ is a $\infty$-topos, the Yoneda inclusion provides an equivalence $\T\simeq \Fun^{Limits}(\T^{op}, \Spaces)$.\\

Let us now explain the construction of brane actions for an $\infty$-operad in a topos. Later on we will be interested in the $\infty$-topos of derived stacks over a field of characteristic zero. Let $\M^\otimes\in \Sh(\C, \mathrm{Op}_{\infty})$ be an $\infty$-operad in $\T=\Sh(\C)$. For the rest of this section we will be working under the following assumption: 

\begin{enumerate}[label=\Alph*)]
\item \label{Alph*} For each $Z \in \C$, the $\infty$-operad $\Mmonoidal(Z)\in \mathrm{Op}_{\infty}$ is unital, coherent, has a unique color, which we will designate by $c_Z$ and the underlying $(\infty,1)$-category of $\Mmonoidal(Z)$ is a contractible $\infty$-groupoid.
\end{enumerate}

   In this case we know from the discussion in the previous section that each $\Mmonoidal(Z)$ admits a brane action in $\Spaces^{\mathrm{co-corr}}$. Our task now is to understand the compatibilites between these brane actions. For that purpose we will need some preliminaries. The first observation is that as $\Mmonoidal$ has a unique color, we can use the equivalence $\theta:\mathrm{Op}_{\infty}(\T)\simeq \Fun^{\mathrm{Segal}}(\Nerve(\Omega^{op}), \T)$  (see the details in \cite{1305.3658, 1606.03826}) to think of $\Mmonoidal$ as a collection of objects in $\T$, $\{\M_n\}_{n\geq 0}$, defined by means of the following universal property: for every $Z\in \C$, we have canonical equivalences:

$$
\Map_{\T}(Z,\M_n)\simeq \theta(\Mmonoidal)(T_n)(X)\simeq \Map_{\Mmonoidal(Z)_{act}}((c_Z,..., c_Z), c_Z)
$$
\noindent  where $T_n\in \Omega$ is the non-planar rooted tree with $n$-leafs. Of course, in this case, the Yoneda's lemma gives us canonical maps in $\T$

$$
\M_n\times \M_{i_1}\times ... \times \M_{i_n}\to \M_{i_1+...+ i_n}
$$

\noindent that determine the composition operations in $\Mmonoidal$. In other words, we can think of $\Mmonoidal$ using our familiar intuition of operadic objects and their standard operations. This machine captures all the necessary coherences. Moreover, as $\Mmonoidal$ is unital, we will have $\M_0\simeq *$ so that we will also have operations $\M_{n+1}\to \M_n$ which correspond to forgeting the last input. Our assumption $\mathrm{A}$) implies also that $\M_1\simeq *$. \\

It follows from the Remark \ref{contractibleO1} that a monochromatic $\infty$-operad in a $\infty$-topos $\T$ is coherent if and only if the commutative diagram in $\T$

\begin{equation}
\label{coherencetoposmonochromatic1}
\xymatrix{
\M_n\times \M_{m+1}\coprod_{\M_2\times \M_n \times \M_m} \M_{n+1}\times \M_m\ar[r]\ar[d]& \M_{n+m}\ar[d]\\
\M_n\times \M_m\ar[r]& \M_{n+m-1}
}
\end{equation}

\noindent is cartesian.

Given $Z\in \C$, the brane action $b_Z:\Mmonoidal(Z)\to \Spaces^{\mathrm{co-corr}, \otimes_{\coprod}}$ of the Theorem \ref{thmbranes} endows the space of extensions $\Ext(\mathrm{Id}_{c_Z})$ in $\Mmonoidal(Z)$ with a structure of $\Mmonoidal(Z)$-algebra in co-correspondences of spaces. Following our assumptions, and as explained in the Remark \ref{contractibleO1}, for a given $Z$ this space is given by $\Map_\T(Z, \M_2)$. To describe the action we can also mimic the arguments of the Remark \ref{contractibleO1}. Indeed, Yoneda's lemma ensures the existence of universal diagrams in $\T$

\begin{equation}
\label{universaldiagram}
\xymatrix{
\coprod_n \M_n\times \M_2 \ar[r]\ar[dr]& \M_{n+1}\ar[d]& \M_{2}\times \M_n\ar[dl]\ar[l]\\
&\M_n&
}
\end{equation}

\noindent with the universal property: for a given $\sigma: Z\to \M_n$, the effect of the action of $\sigma$ on $\Map_\T(Z, \M_2)\simeq \Map_{\T/Z}(Z, Z\times \M_2)$ is the co-correspondence obtained by pulling back the universal diagram along $\sigma$ and taking sections over $Z$, namely

$$
\coprod_n  \Map_{\T/Z}(Z, Z\times \M_2) \to \Map_{\T/Z}(Z, C_{\sigma}) \leftarrow \Map_{\T/Z}(Z, Z\times \M_2) 
$$

\noindent where

\begin{equation}
\label{curveprototype}
C_\sigma: =Z\times_{\M_n} \M_{n+1}
\end{equation}

\begin{remark}
\label{coherencetoposmonochromatic}
The coherence criterium of the diagram (\ref{coherencetoposmonochromatic}) can now be measure in terms of the objects $C_{\sigma}$. In fact, $\Mmonoidal$ is coherent if and only if for any $Z$ and any two operations $\sigma:Z\to \M_n$ and $\tau:Z\to \M_m$, the map induced between the fibers

\begin{equation}
\label{coherencetoposmonochromatic2}
\xymatrix{
C_{\sigma}\coprod_{Z\times \M_2} C_{\tau}\ar[r] \ar[d]& C_{\sigma\circ \tau}\ar[d]\\
\M_n\times \M_{m+1}\coprod_{\M_2\times \M_n \times \M_m} \M_{n+1}\times \M_m\ar[r]\ar[d]& \M_{n+m}\ar[d]\\
\M_n\times \M_m\ar[r]& \M_{n+m-1}
}
\end{equation}

\noindent is an equivalence. 
\end{remark}

This two-step description (pulling back along $\sigma$ and taking sections), suggests that in fact this algebra structure in $\Spaces^{\mathrm{co-corr}}$ exists before taking sections, or in other words, that the space $Z\times \M_2$ is itself an $\Mmonoidal$-algebra in $(\T/Z)^{\mathrm{co-corr}}$. Intuitively, given $\sigma:Z\to M_n$, the action of $\sigma$ on $Z\times \M_2$ is simply the co-correspondence over $Z$ given by the pullback along $\sigma$ of the universal diagram \ref{universaldiagram}, namely

\begin{equation}
\label{universaldiagram2}
\xymatrix{
\coprod_n Z\times \M_2 \ar[r]\ar[dr]& C_\sigma \ar[d]& \M_{2}\times Z\ar[dl]\ar[l]\\
&Z&
}
\end{equation}

\noindent so that the brane action in co-correspondences in spaces can be recovered by applying the monoidal functor

$$
(\T/Z)^{\mathrm{co-corr}}\to \Spaces^{\mathrm{co-corr}} 
$$

\noindent which applies $\Map_{\T/Z}(Z,-)$ both at the level of objects and correspondences.\\

Let us explain how to construct this action over $Z$. We start with the following nice consequence of Rezk's characterization of $\infty$-topoi:

\begin{prop}
\label{stackofstacks}
Let $\T$ be an $\infty$-topos. The construction

$$Z\in \C^{op}\mapsto (\T/Z)^{\mathrm{co-corr}, \otimes_{\coprod}}\in \mathrm{Op}_{\infty}$$

\noindent is an $\infty$-operad in $\T$.
\begin{proof}Recall that $\T/Z$ is again an $\infty$-topos \cite[6.3.5.11]{lurie-htt}. Thanks to Charles Rezk's characterization of $\infty$-topoi \cite[6.1.6.3]{lurie-htt}, the assignment $Z\in \C^{op}\mapsto \T/Z$ is a sheaf with respect to the topology in $\C$ and admits a classifying object, which we shall denote as $\T/(-)\in \T$. In this case, as both $(-)^{op}$ and $(-)^{\mathrm{corr}}$ are functorial and are right adjoints (see section \ref{correspondencesandtwistedarrows}), they commute with limits so that the assignment  $Z\in \C^{op}\mapsto (\T/Z)^{\mathrm{co-corr}, \otimes_{\coprod}}\in \mathrm{Op}_{\infty}$  will also be a sheaf and representable in $\T$. We will denote this operadic object as  $(\T/(-))^{\mathrm{co-corr}, \otimes_{\coprod}}$.
\end{proof}
\end{prop}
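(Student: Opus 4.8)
The plan is to check the proposition against the characterization of $\infty$-operads in $\T$ recalled above: since $\T$ is an $\infty$-topos, $\mathrm{Op}_{\infty}(\T)$ is equivalent to the $(\infty,1)$-category of limit-preserving functors $\T^{op}\to \mathrm{Op}_{\infty}$, so it suffices to show that the assignment $Z\mapsto (\T/Z)^{\mathrm{co-corr}, \otimes_{\coprod}}$, together with the pullback functoriality $f\mapsto f^{*}$ for a morphism $f\colon Z\to Z'$ in $\T$, assembles into a functor $\T^{op}\to \mathrm{Op}_{\infty}$ that commutes with all small limits. First I would pin down the functoriality: for $f\colon Z\to Z'$ the base change $f^{*}=Z\times_{Z'}(-)\colon \T/Z'\to \T/Z$ preserves finite limits (being a pullback) and all colimits (colimits are universal in an $\infty$-topos, so $f^{*}$ is a left adjoint); in particular it preserves finite coproducts, so $(f^{*})^{op}$ preserves finite limits and induces a symmetric monoidal functor $(\T/Z')^{\mathrm{co-corr},\otimes_{\coprod}}\to (\T/Z)^{\mathrm{co-corr},\otimes_{\coprod}}$ through the constructions of Section \ref{correspondencesandtwistedarrows}.

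Next I would factor the functor in question as the composite
$$
\T^{op}\xrightarrow{\ Z\mapsto \T/Z\ } \widehat{\iCat}\xrightarrow{\ (-)^{op}\ }\widehat{\iCat}\xrightarrow{\ (-)^{\mathrm{corr},\otimes_{\times}}\ }\mathrm{Op}_{\infty},
$$
suitably restricted to the subcategories on which the last two functors are defined (e.g. $\infty$-categories with finite limits), and check that each factor preserves small limits. For the first factor this is exactly Rezk's descent characterization of $\infty$-topoi invoked in the statement: by \cite[6.1.3.9, 6.1.6.3]{lurie-htt} the assignment $Z\mapsto \T/Z$ sends colimits in $\T$ to limits of $\infty$-categories — equivalently, it is a limit-preserving functor $\T^{op}\to \widehat{\iCat}$ — and it is moreover classified by an object of $\T$. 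The functor $(-)^{op}$ is an involutive equivalence of $\widehat{\iCat}$, hence preserves all limits. For the third factor one uses, as already observed in Section \ref{correspondencesandtwistedarrows}, that $\Tw$ preserves all small limits \cite[5.2.1.19]{lurie-ha} and admits a right adjoint of which $(-)^{\mathrm{corr}}$ is the subfunctor cut out by the pullback condition of Proposition \ref{label1}; since that condition is itself stable under limits of $\infty$-categories, $(-)^{\mathrm{corr}}$ — and, by the monoidal refinement of Corollary \ref{label2} together with the fact that $\CAlg(-)$ preserves limits, also the enhancement $(-)^{\mathrm{corr},\otimes_{\times}}$ — commutes with small limits (alternatively, one may quote the description of $(-)^{\mathrm{corr}}$ in \cite{1409.0837} as a genuine limit-preserving functor). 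Composing, $Z\mapsto (\T/Z)^{\mathrm{co-corr},\otimes_{\coprod}}$ preserves small limits, hence corresponds to an $\infty$-operad in $\T$; representability in $\T$ is then automatic from the definition of $\mathrm{Op}_{\infty}(\T)$, and we write $(\T/(-))^{\mathrm{co-corr},\otimes_{\coprod}}$ for the resulting operadic object.

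The step I expect to be the main obstacle is the third factor: making rigorous that $(-)^{\mathrm{corr}}$, and particularly its symmetric monoidal upgrade $(-)^{\mathrm{corr},\otimes_{\times}}$, commutes with limits. Because this construction is only singled out as a subfunctor of the right adjoint to $\Tw$ by the Segal/pullback condition, one must verify carefully that the full subcategory it cuts out is closed under the limits being computed and that the inclusion interacts correctly with the monoidal structure of \cite[5.2.2.23]{lurie-ha}; this is the point where it is cleanest to appeal to the more recent treatment in \cite{1409.0837}. A secondary, purely bookkeeping difficulty is keeping straight the size universes ($\iCat$ versus $\widehat{\iCat}$) and confirming that $\otimes_{\coprod}$ is natural in $Z$, which — as above — reduces to $f^{*}$ preserving finite coproducts, true since $f^{*}$ is a left adjoint in the $\infty$-topos $\T$.
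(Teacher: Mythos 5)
Your proposal is correct and follows essentially the same route as the paper: Rezk's descent theorem makes $Z\mapsto \T/Z$ a limit-preserving (representable) functor, and one then composes with $(-)^{op}$ and $(-)^{\mathrm{corr},\otimes_{\times}}$, which commute with limits. Your extra care about $(-)^{\mathrm{corr}}$ being only a subfunctor of the right adjoint to $\Tw$ (so that one must check the pullback condition is stable under limits) is a worthwhile refinement of the paper's terser claim that it "is a right adjoint", but it is the same argument.
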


In this case, the compatibilities between the different brane actions when $X$ varies are encoded by  the following result:

\begin{prop}\label{propbranestopos}
Let $\T$ be an $\infty$-topos and let $\Mmonoidal$ be an $\infty$-operad in $\T$ satisfying the assumptions in the beginning of this section. Then there exists a map of $\infty$-operads in $\T$
 
$$
\Mmonoidal\to (\T/(-))^{\mathrm{co-corr}, \otimes_{\coprod}}
$$

\noindent encoding the brane action informally described as follows: given $Z\in \C$, it sends an active map $\sigma \in \Mmonoidal(Z)$ to the operation given by the diagram (\ref{universaldiagram2}).
\end{prop}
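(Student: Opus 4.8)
The plan is to promote the fibrewise brane actions of \thmref{thmbranes} to a single map of $\infty$-operads internal to $\T$, by exhibiting both sides as limit-preserving functors $\C^{op}\to \mathrm{Op}_\infty$ and producing a natural transformation between them. Concretely, recall from \defnref{stackofstacks} and its proof that $Z\mapsto (\T/Z)^{\mathrm{co-corr},\otimes_{\coprod}}$ is a sheaf of $\infty$-operads on $\C$, i.e.\ an object of $\mathrm{Op}_\infty(\T)$; on the other side, $\Mmonoidal$ is by hypothesis such an object too, and under the equivalence $\mathrm{Op}_\infty(\T)\simeq \Fun^{Limits}(\T^{op},\mathrm{Op}_\infty)$ (or equivalently sheaves of $\infty$-operads on $\C$) a map of $\infty$-operads in $\T$ is exactly a natural transformation $Z\mapsto \big(\Mmonoidal(Z)\to (\T/Z)^{\mathrm{co-corr},\otimes_{\coprod}}\big)$ of functors $\C^{op}\to \mathrm{Op}_\infty$. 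So the whole statement reduces to: the family of brane actions $b_Z:\Mmonoidal(Z)\to \Spaces^{\mathrm{co-corr},\otimes_{\coprod}}$, composed with the canonical colimit-preserving monoidal functor $\Spaces^{\mathrm{co-corr},\otimes_{\coprod}}\to (\T/Z)^{\mathrm{co-corr},\otimes_{\coprod}}$ induced by $\Spaces\simeq \T/(\ast_Z)\to \T/Z$ — wait, more precisely by the constant-sheaf/base-change functor — assembles into a natural transformation in $Z$.

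To get naturality I would not re-run the construction pointwise but instead invoke the functoriality established in \secref{functoriality}: there we built an $\infty$-functor $\mathrm{B}:\mathrm{Op}_\infty^\ast\to \Fun^{rf}(\Delta[1],\iCat)$ sending an operad $\Op$ to the right fibration $\BO\to \Tw(\Env(\Op))^\otimes$, functorially in maps of operads, and we checked it sends the subcategory $\BO$ compatibly. Applying $\mathrm{B}$ to the sheaf of operads $Z\mapsto \Mmonoidal(Z)$ (whose transition maps are maps of unital monochromatic operads with contractible unary part, hence land in $\mathrm{Op}_\infty^\ast$) produces a sheaf-valued family of right fibrations, whose associated functors $\Tw(\Env(\Mmonoidal(Z)))^\otimes\to \Spaces^{op}$ are natural in $Z$. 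Translating back through \corref{label2} (the monoidal twisted-arrow/co-correspondence adjunction) and the envelope adjunction $\Env\dashv (-)$, each level of the family gives the brane algebra map $\Mmonoidal(Z)\to \Spaces^{\mathrm{co-corr},\otimes_{\coprod}}$, and naturality of $\mathrm{B}$ gives naturality of these maps in $Z$. Finally, post-composing with $\Spaces^{\mathrm{co-corr},\otimes_{\coprod}}\to (\T/Z)^{\mathrm{co-corr},\otimes_{\coprod}}$, which is natural in $Z$ because $(-)^{op}$ and $(-)^{\mathrm{corr}}$ preserve limits and $Z\mapsto \T/Z$ is a sheaf, yields the desired natural transformation, hence the map of $\infty$-operads in $\T$. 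The informal description — that on $\sigma:Z\to \M_n$ it produces the co-correspondence \eqref{universaldiagram2} with middle term $C_\sigma=Z\times_{\M_n}\M_{n+1}$ — then follows by unwinding: by Yoneda the universal diagram \eqref{universaldiagram} represents exactly the output of $b_Z$ after \remark{contractibleO1} identifies $\Ext(\mathrm{Id}_{c_Z})$ with $\Map_\T(Z,\M_2)$ and $\Ext(\sigma)$ with sections of $C_\sigma$.

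The main obstacle, I expect, is bookkeeping the internalization: one must be careful that ``$\infty$-operad in $\T$'' as defined via dendroidal Segal objects $\Nerve(\Omega^{op})\to \T$ really is equivalent, for monochromatic unital $\Mmonoidal$, to the data of the objects $\{\M_n\}$ with their composition maps, and that a map of such operads is detected levelwise on the $\M_n$'s together with compatibility of the composition maps — this is the passage used implicitly to even state the target $(\T/(-))^{\mathrm{co-corr},\otimes_{\coprod}}$ as an operad in $\T$, and it needs the comparison results cited from \cite{1305.3658}. A secondary subtlety is checking that the canonical functor $\Spaces^{\mathrm{co-corr},\otimes_{\coprod}}\to (\T/Z)^{\mathrm{co-corr},\otimes_{\coprod}}$ is the ``correct'' one, i.e.\ induced by the functor $\Spaces\to \T/Z$ classifying $Z$ (equivalently, $S\mapsto S\times Z$ over $Z$, using that $\T$ is generated under colimits by the terminal object), so that sections $\Map_{\T/Z}(Z,-)$ recover the pointwise brane action of \thmref{thmbranes}; this is where one verifies the informal formula. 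Everything else — the functoriality of $\Tw$, $\Env$, $(-)^{\mathrm{corr}}$, the preservation of semi-inert and $\pi$-cartesian edges under maps of operads — has already been set up in the preceding sections and can be cited directly.
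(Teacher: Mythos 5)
There is a genuine gap, and it sits at the heart of your strategy. You propose to obtain the internal action by post-composing the fibrewise brane action $b_Z:\Mmonoidal(Z)\to \Spaces^{\mathrm{co-corr},\otimes_{\coprod}}$ with the base-change functor $\Spaces\to \T/Z$, $S\mapsto S\times Z$. That composite sends the unique color to the \emph{constant} object $\Ext(\mathrm{Id}_{c_Z})\times Z\simeq \Map_{\T}(Z,\M_2)\times Z$ and sends an operation $\sigma:Z\to\M_n$ to a co-correspondence whose middle term is $\Ext(\sigma)\times Z\simeq \Map_{\T/Z}(Z,C_\sigma)\times Z$. The proposition asks for an algebra structure on $Z\times\M_2$ with middle terms the geometric objects $C_\sigma=Z\times_{\M_n}\M_{n+1}$, and these are genuinely different objects of $\T/Z$: the functor $-\times Z$ is left adjoint to $\Map_{\T/Z}(Z,-)$, so taking sections of the internal action recovers the pointwise one, but base-changing the pointwise action does not recover the internal one ($C_\sigma$ is not determined by its space of global sections over $Z$). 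Your closing claim that the informal formula ``follows by unwinding'' via the identification $\Ext(\sigma)\simeq\Map_{\T/Z}(Z,C_\sigma)$ conflates these two sides of the adjunction. Indeed, the entire content of this proposition relative to Theorem \ref{thmbranes} is precisely that the action exists \emph{before} taking sections, as the paper emphasizes in the discussion surrounding (\ref{universaldiagram2}).

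The repair is not a post-composition but a refinement of the construction of $\mathrm{B}\Op$ itself. One passes to the cartesian fibration $\intcart\Tw(\Env(\M))^{\otimes}\times_{\T}\Fun(\Delta[1],\T)\to\T$, whose objects record a pair $(\sigma\text{ over }Z,\ u:Y\to Z)$, and builds a right fibration over it (by pulling back $\intcart\mathrm{B}\M$ along the section coming from cartesian lifts) whose fiber over such a pair is $\Ext(u^*(\sigma))$. The functor to $\T/Z$ is then extracted from the \emph{left representability} of this fibration fiberwise over $\T$: the presheaf $Y\mapsto\Ext(u^*(\sigma))$ on $\T/Z$ is representable by $C_\sigma$ via $\Ext(u^*(\sigma))\simeq\Map_{/Z}(Y,C_\sigma)$, and only through this Yoneda step does $C_\sigma$ (rather than its space of sections) appear as the value of the action. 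Your appeal to the functoriality of $\mathrm{B}$ from Section \ref{functoriality} is the right ingredient for assembling the fibrations as $Z$ varies, but without the extra $\Fun(\Delta[1],\T)$ factor and the representability argument you cannot land in $(\T/(-))^{\mathrm{co-corr}}$ with the correct values.
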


Before giving the proof of this proposition let us recall a technical fact which will be used several times throughout the paper.

\begin{remark}
\label{remark-cartesian-cocartesian}
Let $\pi:X\to S^{op}$ be a cocartesian fibration between $(\infty,1)$-categories classifying a $\infty$-functor $p:S^{op}\to \iCat$. Then the cartesian fibration that also classifies $p$, $\int_{\mathrm{Cart}}p\to S$ can be obtained as follows: define a new simplicial set $Y$ over $S^{op}$ such that maps of simplicial sets over $S^{op}$, $\Map_{S^{op}}(T, Y)$ are in bijection with maps of simplicial sets $\Map(T\times_{S^{op}}Y, \Spaces)$. In particular, an object of $Y$ over a vertice $s\in S^{op}$ is just a presheaf on $X_s^{op}$. Let $Y_0\subset Y$ be the full subcategory of $Y$ spanned by those vertices corresponding to representable presheaves and let $Y^{op}_0\to S$ denote the opposite of the projection $Y\to S^{op}$. Then this map is a cartesian fibration that classifies $p$. Conversely, if $\alpha:X\to S$ is a cartesian fibration classifying a diagram $p$ then the cocartesian fibration classifying the same diagram can be obtained by applying these steps to the cocartesian fibration $\alpha^{op}$ and then taking the opposite of the output.
\end{remark}

\textit{Proof of Prop. \ref{propbranestopos}}

As  $(\T/(-))^{\mathrm{co-corr}, \otimes_{\coprod}}$ is a sheaf, the data of a morphism of operadic sheaves $\Mmonoidal\to  (\T/(-))^{\mathrm{co-corr}, \otimes_{\coprod}}$ equivalent to the data of a morphism in $\Fun(\C^{op}, \mathrm{Op}_{\infty})$. Using the adjunctions

$$
\xymatrix{
\Fun(\C^{op}, \CAlg(\iCat))\ar@{^{(}->}[r]&\Fun(\C^{op}, \mathrm{Op}_{\infty})\ar@/_8pt/[l]_{\Env}
}
$$

\noindent and

$$
\xymatrix{
\Fun(\C^{op}, \CAlg(\iCat))\ar[r]_{(-)^{\mathrm{corr}}}&\Fun(\C^{op}, \CAlg(\iCat))\ar@/_8pt/[l]_{\Tw}
}
$$

\noindent this is the same as the data of a natural transformation

$$
\Tw(\Env(\Mmonoidal))\to ((\T/(-))^{op})^{\times}
$$

But as in \ref{thmbranes}, this is the same as a natural transformation in $\Fun(\C^{op}, \iCat)$

\begin{equation}
\label{natural3}
\Tw(\Env(\M))^{\otimes}\to ((\T/(-))^{op})
\end{equation}

\noindent objectwise satisfying the conditions of a weak Cartesian structure and the conditions of the Cor. \ref{label2}. It will be more useful now to see them as functors $\T^{op}\to \iCat$ via Kan extension. In this case to give (\ref{natural3}) is equivalent to construct a map between their associated cartesian fibrations over $\T$

\begin{equation}
\label{diagram-fibrations1}
\xymatrix{
\intcart \Tw(\Env(\M))^{\otimes}\ar[dr] \ar[rr]&& \intcart ((\T/(-))^{op})\ar[dl]\\
&\T&
}
\end{equation}

\noindent preserving cartesian edges. Here the symbol $\intcart$ denotes the unstraightening construction of \cite[Chapter 3]{lurie-htt}. We now remark that the cartesian fibration $\intcart ((\T/(-))^{op})\to \T$ can be described by applying the discussion in the remark \ref{remark-cartesian-cocartesian} to the cocartesian fibration $X:=\intcocart ((\T/(-))^{op})\to S^{op}:=\T^{op}$. But this we can easily see, verifies canonically $\intcocart ((\T/(-))^{op})\simeq (\intcart \T/(-))^{op}$ where now $\intcart \T/(-)\to \T$ is the cartesian fibration classifying the categorical sheaf $X\mapsto \T/X$. For this one we have an explicit description, namely, it is given by the evaluation map $ev_1: \Fun(\Delta[1], \T)\to \T$. See \cite[5.2.2.5]{lurie-htt}. Therefore, and using the notations of the Remark \ref{remark-cartesian-cocartesian} we have  $\intcart ((\T/(-))^{op})=Y_0^{op}$ so that the data of a map (\ref{diagram-fibrations1}) is uniquely determined by the data of a $\infty$-functor

\begin{equation}
\label{diagram-fibrations2}
(\intcart \Tw(\Env(\M))^{\otimes})^{op}\times_{S^{op}}X^{op}\to \Spaces
\end{equation}

\noindent which, unwinding the notations, can be written as

\begin{equation}
\label{diagram-fibrations3}
\intcart \Tw(\Env(\M))^{\otimes}\times_{\T}\Fun(\Delta[1], \T)\to \Spaces^{op}
\end{equation}

\noindent or, in other words, as the data of a (fiberwise over $\T$) left representable right fibration

\begin{equation}
\label{diagram-fibrations4}
\xymatrix{
\mathrm{B}(\T, \M)\ar[d]\\
\intcart \Tw(\Env(\M))^{\otimes}\times_{\T}\Fun(\Delta[1], \T)
}
\end{equation}
 
We will now explain how to construct the correct right fibration. First we let $\mathrm{B}\M$ denote the image of $\Mmonoidal$ through the composition

\begin{equation}
\label{diagram-fibrations5}
\xymatrix{
\mathrm{Op}_{\infty}(\T)\subseteq \Fun(\T^{op}, \iCat)\ar[rr]^{\mathrm{B}\circ-}&& \Fun(\T^{op}, \Fun^{rf}(\Delta[1], \Spaces))\ar[rr]^{ev_0}&& \Fun(\T^{op},\iCat)
}
\end{equation}

\noindent where $\mathrm{B}$ is the functor constructed in section \ref{functoriality}. By construction, $\mathrm{B}\M$ comes equipped with a natural transformation 

\begin{equation}
\label{diagram-fibrations6}
\mathrm{B}\M\to  \Tw(\Env(\M))^{\otimes}
\end{equation}
 
\noindent and the transition maps preserve cartesian edges. In this case it is an easy exercise to see that the map induced between their associated cartesian fibrations

\begin{equation}
\label{diagram-fibrations7}
\intcart \mathrm{B}\M\to \intcart \Tw(\Env(\M))^{\otimes}
\end{equation}

\noindent is again a right fibration.\\

Let now $\pi:X\to S$ be a generic cartesian fibration between $(\infty,1)$-categories and let $\Fun(\Delta[1], X)^{\mathrm{Cart}}$ denote the full subcategory of  $\Fun(\Delta[1], X)$ spanned by the $\pi$-cartesian edges. Then it is also an easy exercise to check that the natural map

\begin{equation}
\label{diagram-fibrations8}
\Fun(\Delta[1], X)^{\mathrm{Cart}}\to X\times_S \Fun(\Delta[1], S)
\end{equation}

\noindent sending $x\to y \mapsto (y, \pi(x)\to \pi(y))$ is an equivalence of $(\infty,1)$-categories. Indeed, the fact that it is fully faithful follow from the definition of $\pi$-cartesian edges (using the characterization of $\pi$-cartesian edges via the mapping spaces of \cite[2.4.1.10 (2)]{lurie-htt}) and the fact that it is essentially surjective follows from the definition of cartesian fibration. In this case it admits a section $s$.\\

Back to our situation we apply this discussion to the cartesian fibration $X=\intcart \Tw(\Env(\M))^{\otimes}\to S=\T$ and composing the section $s$ with the evaluation at $0$ we obtain a map

\begin{equation}
\label{diagram-fibrations9}
\intcart \Tw(\Env(\M))^{\otimes}\times_\T \Fun(\Delta[1], \T)\to \Fun(\Delta[1], \intcart \Tw(\Env(\M))^{\otimes})^{\mathrm{Cart}}\to  \intcart \Tw(\Env(\M))^{\otimes}
\end{equation}

Finally, we define the right fibration $\mathrm{B}(\T, \M)$ in (\ref{diagram-fibrations4}) to be the pullback

\begin{equation}
\label{diagram-fibrations10}
\xymatrix{
\mathrm{B}(\T, \M)\ar[d]\ar[r]& \intcart \mathrm{B}\M\ar[d]\\
\intcart \Tw(\Env(\M))^{\otimes}\times_\T \Fun(\Delta[1], \T)\ar[r]& \intcart \Tw(\Env(\M))^{\otimes}
}
\end{equation}

It is clear from the construction that the fiber over an object $(\sigma \text{ over } Z, u:Y\to Z)$ is the space of extensions $\Ext(u^*(\sigma))$ in $\Mmonoidal(Y)$. In the case when $\sigma$ consists of a single active map $\nfin\to \onefin$ in $\Mmonoidal(Z)$ classified by a map $Z\to \M_n$ in $\T$, then we have canonical identifications

\begin{equation}
\Ext(u^*(\sigma))\simeq \Map_Y(Y, C_\sigma\times_Z Y)\simeq \Map_Z(Y, C_\sigma)
\end{equation}

\noindent where $C_\sigma$ is defined as in (\ref{curveprototype}). More generally, if $\sigma$ classifies a list of active maps $\sigma_i:\langle n_i \rangle \to\onefin$ in $\Mmonoidal(Z)$ corresponding to maps $\sigma_i: Z\to \M_{n_i}$, then by the defining properties of extensions and because $u^*$ is a map of operads, we have $\Ext(u^*(\sigma))\simeq \coprod_i \Ext(u^*(\sigma_i))$ which we can write as 

\begin{equation}
\label{equation-representable}
\Map_{/Z}(Y, C_\sigma)\simeq \coprod_{i=1}^n \Map_{/Z}(Y, C_{\sigma_i})
\end{equation}

\noindent where we set 

\begin{equation}
\label{diagram-fibrations11}
C_\sigma:=\coprod_i C_{\sigma_i}
\end{equation}

\noindent in $\T/Z$ (as we can always assume $Y$ to be affine and therefore, absolutely compact).\\

The formula (\ref{equation-representable}) gives us the representability condition specified in the Remark \ref{remark-cartesian-cocartesian}.  

To conclude let us remark that the map (\ref{diagram-fibrations1}) thus obtained, preserves cartesian edges. Indeed, if $\sigma\to \sigma'$ is a cartesian edge in $\intcart \Tw(\Env(\M))^{\otimes}$ over a map $f:Z\to Z'$, by definition, this means that $\sigma\simeq f^*(\sigma')$. By construction we then have $C_{\sigma}\simeq C_{\sigma'}\times_{Z'} Z$, which is exactly what characterizes cartesian edges in $\intcart ((\T/(-))^{op})$. \\

It remains to show that the map (\ref{natural3}) produced by this construction indeed satisfies 1) the conditions of weak cartesian structure and 2) the conditions of the Corollary \ref{label2} objectwise. But 1) follows from the formulas (\ref{diagram-fibrations11}) and 2) from the fact that as $\Mmonoidal$ is coherent the compositions are classified by pushouts as in the formula (\ref{coherencetoposmonochromatic2}). 
 
\hfill $\qed$

\begin{remark}
To conclude this section let us mention that as in the Remark \ref{contractibleO1}, and thanks to the Yoneda's lemma, to check that a monochromatic unital $\infty$-operad $\Mmonoidal$ in $\T$ having a single color with $\Mmonoidal(Z)(c_Z, c_Z)\simeq * $, is coherent, it is enough to check that for every $n\geq 2$ and $m\geq 2$, the composition diagram in $\T$ 

$$
\label{configurationtypediagramstacks}
\xymatrix{
\M_n\times \M_{m+1}\coprod\limits_{\M_2\times \M_n\times \M_n} \M_{n+1}\times \M_n\ar[d]\ar[r]& \M_{n+m}\ar[d]\\
\M_n\times \M_m\ar[r]& \M_{n+m-1}
}
$$
  
\noindent is a pullback diagram.
\end{remark}

\subsubsection{From Co-Spans to Spans}
\label{cospanstospans2}

In this section we explore the content of the Remark \ref{cospanstospans} in the case when we fix an object $E\in \T$ and consider branes mapping to $E$. As in the Remark, fixing $E$ we have a natural strongly monoidal map of $\infty$-operads in $\T$

$$
\rhom_{(-)}(-, E\times (-)): (\T/(-))^{\mathrm{co-corr}, \otimes_{\coprod}}\to  (\T/(-))^{\mathrm{corr}, \otimes_{\times}}
$$

\noindent mapping the induced coproduct structure in co-spans to the product structure in spans. \footnote{This is indeed a strongly monoidal map of $\infty$-operads because $\T$ is an $\infty$-topos}. We can now compose with the brane action of the Prop. \ref{propbranestopos} to produce a map of $\infty$-operads

\begin{equation}
\label{branetopostarget}
\Mmonoidal\to (\T/(-))^{\mathrm{co-corr}, \otimes_{\coprod}}\to  (\T/(-))^{\mathrm{corr}, \otimes_{\times}}
\end{equation}

Intuitively, this map is defined by the formula sending an operation $\sigma:Z\to \M_n$ to
$$
\xymatrix{
 \ar[dr] \rhom_{/Z}(\coprod_n Z\times \M_2, E\times Z)&\ar[l]\rhom_{/Z}(C_\sigma, E\times Z)\ar[r]\ar[d]&\ar[dl]\rhom_{/Z}( Z\times \M_2, E\times Z)\\
&Z&
}
$$

\noindent where $C_\sigma$ is the pullback

$$
\xymatrix{
C_\sigma:=Z\times_{\M_n}\M_{n+1}\ar[r]\ar[d]& \M_{n+1}\ar[d]\\
Z\ar[r]^{\sigma}& \M_n
}
$$

This correspondence is of course equivalent to

\begin{equation}
\label{braneactiontoatarget2}
\xymatrix{
 \prod_n \ar[dr] E^{\M_2}\times Z&\ar[l]\rhom_{/Z}(C_\sigma, E\times Z)\ar[r]\ar[d]&\ar[dl]E^{\M_2}\times Z\\
&Z&
}
\end{equation}
\noindent where $E^{\M_2}:=\rhom_\T(\M_2, E)$.

In this case, and using the adjunction $(-\times Z):\T\to \T/Z$, (\ref{braneactiontoatarget2}) is equivalent to the data of a correspondence in $\T$

\begin{equation}
\label{braneactiontoatarget}
\xymatrix{
&\ar[ld]\rhom_{/Z}(C_\sigma, E\times Z)\ar[dr]& \\\
( \prod_n   E^{\M_2})\times Z&&E^{\M_2}
}
\end{equation}

Again, by Yoneda, we have a universal diagram when $Z=\M_n$ and $\sigma$ is the identity map:

\begin{equation}
\label{braneactiontoatarget}
\xymatrix{
&\ar[ld]\rhom_{/\M_n}(\M_{n+1}, E\times \M_n)\ar[dr]& \\\
( \prod_n   E^{\M_2})\times \M_n&&E^{\M_2}
}
\end{equation}

\noindent so that for any $\sigma:X\to \M_n$, the correspondence assigned to $\sigma$ is the pullback of this universal one, along $\sigma$.

\begin{remark}
\label{remark-descriptionextensionstopos2}
Using the discussion in the Remark \ref{remark-cartesian-cocartesian} and the arguments in the beginning of the proof of the Prop. \ref{propbranestopos} the composition (\ref{branetopostarget}) is determined by the data of a (fiberwise over $\T^{op}$ -  left representable) left fibration

\begin{equation}
\label{remark-fibrationstarget1}
\xymatrix{
\mathrm{B}(\T, \M, E)\ar[d]\\
\intcocart \Tw(\Env(\M))^{\otimes}\times_{\T^op} \Fun(\Delta[1], \T)^{op}
}
\end{equation} 

\noindent where $\intcocart \Tw(\Env(\M))^{\otimes}\to \T^{op}$ is the cocartesian fibration classifying the categorical presheaf  $\Tw(\Env(\M))^{\otimes}$ and $\Fun(\Delta[1], \T)^{op}\to \T^{op}$ is the opposite of $ev_1$. Indeed, to define a natural transformation $\Tw(\Env(\M))^{\otimes}\to \T/(-)$ is equivalent to have a map between their associated cocartesian fibrations

\begin{equation}
\label{remark-fibrationstarget2}
\xymatrix{
\intcocart \Tw(\Env(\M))^{\otimes}\ar[rr]\ar[dr]&&\ar[dl] \intcocart \T/(-)\\
&\T^{op}&
}
\end{equation} 

\noindent that preserves cocartesian edges. Applying the discussion in Remark \ref{remark-cartesian-cocartesian} to the cocartesian fibration $X:= (\intcart \T/(-))^{op}\to \T^{op}$, such a map is equivalent to a $\infty$-functor

\begin{equation}
\label{remark-fibrationstarget3}
\intcocart \Tw(\Env(\M))^{\otimes}\times_{\T^{op}} \Fun(\Delta[1], \T)^{op}\to \Spaces
\end{equation} 

\noindent or, in other words, to a left fibration (\ref{remark-fibrationstarget1}). Informally speaking, the fiber over $(\sigma \text{ over } Z, u:Y\to Z)$ is the mapping space $\Map_Z(Y,  \rhom_Z(C_{\sigma}, E\times Z))$

\end{remark}

\subsection{Brane actions for graded $\infty$-operads}

In this section we discuss the notion of graded $\infty$-operads and explain how to construct graded brane actions. We will first deal with graded $\infty$-operads in spaces and at the end of the section we explain how to extend these results to graded $\infty$-operads in a topos.

\subsubsection{Graded $\infty$-operads and graded brane actions}
\label{gradedoperads}

Intuitively, a graded  $\infty$-operad is an $\infty$-operad $\Opmonoidal\to \Nerve(\Fin)$ such that for every $n\geq 0$, the space of operations $\Map_{\Opmonoidal_{act}}((X_1,..., X_n), Y)$ has a natural decomposition 

$$\coprod_{\beta\in B} \Map^\beta_{\Opmonoidal_{act}}((X_1,..., X_n), Y)$$ 

\noindent where $B$ is a monoid in sets. In other words, every operation is indexed by some $\beta\in B$. Moreover, if $\sigma: X\to Y$ is an active map of degree $\beta$ and for $1\leq i\leq n$, $\sigma_i:Z_i\to X_i$ are active maps of degree $\beta_i$, then the composition $\oplus_i Z_i\to Y$ is of degree $\beta+\sum_i \beta_i$. In order to formalize this idea, given $B$, we will construct an $\infty$-operad in spaces $\Nerve(\Fin^B)\to \Nerve(\Fin)$ which, essentialy, adds gradings to the morphisms in $\Nerve(\Fin)$. Then, we define a \emph{graded $\infty$-operad} to be an $\infty$-operad $\Opmonoidal$ equipped with a map of  $\infty$-operads $\Opmonoidal\to \Nerve(\Fin^B)$.

\begin{construction}
\label{Bsets}
Let $B$ be a monoid in sets with indecomposable zero\footnote{Recall that indecomposable zero means that if $\beta, \beta'\in B$ are such that $\beta+\beta'=0$ then both $\beta$ and $\beta'$ are zero.}. We define a category $\Fin^B$ as follows:

\begin{enumerate}
\item its objects are the objects of $\Fin$.
\item a morphism $\nfin\to \mfin$ is a pair $(f,\beta)$ where $f$ is a map in $\Fin$ from $\nfin \to \mfin$ and $\beta$ is a function $\beta: \mfin^{\circ}\to B$ 

\item the composition is dictated by the following rule:

\begin{itemize}
\item given $(f,\beta):\nfin\to \mfin$ and $(g, \lambda):\mfin\to \langle k \rangle$, then $\lambda\circ \beta: \langle k \rangle^{\circ}\to B$ is defined by the formula

\[
    (\lambda\circ \beta)(i):=
\begin{cases}
    \lambda(i)  ,& \text{if }  g^{-1}(\{i\})= \emptyset \\
    \lambda(i) + \sum_{j\in g^{-1}(\{i\})} \beta(j),              & \text{otherwise}
\end{cases}
\]

\end{itemize}

\end{enumerate}

It is clear from this definition that the composition law is well-defined. It is also clear that $\Fin^B$ has a forgetful functor $\Fin^B\to \Fin$ which simply forgets the grading functions $\beta$.

\end{construction}

\begin{prop}
\label{finalgradedoperad}
Let $B$ be a monoid with indecomposable zero. Then the map $\Nerve(\Fin^B)\to \Nerve(\Fin)$ is an $\infty$-operad.
\begin{proof}
We verify the three conditions of \cite[2.1.1.10]{lurie-ha}.

\begin{enumerate}
\item Every inert morphism $f:\nfin\to \mfin \in \Nerve(\Fin)$ has a coCartesian lifting. Indeed, we can lift $f$ by choosing the grading given by the zero function $(f,0)$. The grading has to be zero because of the fact $B$ has indecomposable zero. It is easy to see that this is a coCartesing lifting:
given $(u,\beta): \nfin\to \langle k \rangle$ and a commutative diagram in $\Fin$

$$
\xymatrix{
\nfin \ar[dr]_u\ar[r]^f & \mfin \ar[d]^g\\
&\langle k \rangle
}
$$

\noindent we can show that there exists a unique dotted arrow in $\Nerve(\Fin^B)$

$$
\xymatrix{
\nfin \ar[dr]_{(u, \beta)}\ar[r]^{(f,0)} & \mfin \ar@{-->}[d]^{(g, \lambda)}\\
&\langle k \rangle
}
$$

\noindent that makes the diagram commute. Choose $\lambda=\beta$.

\item Fixing $f:\nfin\to \mfin$, we have

$$
\Map^f_{\Nerve(\Fin^B)}( \nfin, \mfin)\simeq \prod_{i\in \mfin^\circ}\Map^{\rho^i\circ f}_{\Nerve(\Fin^B)}( \nfin, \onefin)
$$

\noindent where $\rho^i:\mfin\to \onefin$ is the inert map that sends $i\to 1$ and all the others to $0$. In this case we have

$$
\Map^f_{\Nerve(\Fin^B)}( \nfin, \mfin)\simeq \{f\}\times \mathrm{Hom}_{Sets}(\mfin^\circ, B)\simeq 
$$

$$
\{f\}\times  \mathrm{Hom}_{Sets}(\coprod_{i\in \mfin^\circ} \onefin^\circ, B)\simeq  \prod_{i\in \mfin^\circ} \{f\}\times \mathrm{Hom}_{Sets}( \onefin^\circ, B)\simeq
$$

$$
\simeq \prod_{i\in \mfin^\circ} B
$$

\noindent which is equivalent to 

$$
 \prod_{i\in \mfin^\circ} \{\rho^i\circ f\}\times  \mathrm{Hom}_{Sets}( \onefin^\circ, B)\simeq \prod_{i\in \mfin^\circ}\Map^{\rho^i\circ f}_{\Nerve(\Fin^B)}( \nfin, \onefin)
$$

\item $\Nerve(\Fin^B)_{\nfin}\simeq \Nerve(\Fin^B)^n_{\onefin}$. This is obvious from the definition.
\end{enumerate}

\end{proof}
\end{prop}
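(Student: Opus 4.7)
The plan is to verify the three defining conditions for $\Nerve(\Fin^B) \to \Nerve(\Fin)$ to be an $\infty$-operad, following Lurie's characterization (HA 2.1.1.10). Since all three conditions are purely combinatorial at the level of nerves of $1$-categories, the argument should reduce to computations in $\Fin^B$ itself.

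First, I would produce coCartesian lifts of inert morphisms. Given an inert $f:\nfin \to \mfin$ in $\Fin$, the natural candidate lift is $(f,0)$ with the zero grading. To check it is coCartesian, given another morphism $(u,\beta):\nfin \to \langle k \rangle$ lying over $u = g\circ f$ for some $g$ in $\Fin$, I must show there is a unique lift $(g,\lambda)$ with $(g,\lambda)\circ (f,0) = (u,\beta)$. Unwinding the composition rule in Construction \ref{Bsets}, $\lambda$ is forced: since $f$ is inert, each fiber $f^{-1}(j)$ for $j$ in the image of $f$ is a singleton, hence contributes trivially to the sum, so $\lambda = \beta$ as functions. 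This is where the hypothesis that $B$ has indecomposable zero enters: it guarantees that only the zero grading can appear on inert lifts, ensuring the lift is genuinely unique rather than merely existent.

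Second, I would verify the Segal-type product condition: for $f:\nfin \to \mfin$, the canonical map
\[
\Map^f_{\Nerve(\Fin^B)}(\nfin,\mfin) \to \prod_{i\in \mfin^\circ} \Map^{\rho^i\circ f}_{\Nerve(\Fin^B)}(\nfin,\onefin)
\]
is an equivalence. This follows immediately from the definition: a lift of $f$ is precisely a grading function $\beta : \mfin^\circ \to B$, and the set $\mathrm{Hom}_{\mathrm{Set}}(\mfin^\circ, B)$ factors as $\prod_{i\in\mfin^\circ} B$, which on the right-hand side is the set of choices of $\beta(i)\in B$ for each $i$. The third condition, $\Nerve(\Fin^B)_{\nfin}\simeq \Nerve(\Fin^B)_{\onefin}^n$, is automatic because $\Fin^B$ and $\Fin$ have the same objects and, by Remark \ref{remarkgraded1}, the isomorphisms over $\mathrm{id}_{\nfin}$ in $\Fin^B$ coincide with those in $\Fin$.

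The main obstacle I anticipate is not really technical but bookkeeping: making the composition law in $\Fin^B$ transparent enough to confirm the coCartesianness in the first step, in particular isolating precisely where indecomposability of zero in $B$ is needed. Once that is made explicit, conditions (2) and (3) are essentially tautological from the construction.
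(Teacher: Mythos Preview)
Your proposal is correct and follows essentially the same route as the paper: both verify the three conditions of \cite[2.1.1.10]{lurie-ha} directly, lifting an inert $f$ by $(f,0)$ and checking coCartesianness via the forced equality $\lambda=\beta$, then reducing the Segal condition to $\mathrm{Hom}_{\mathrm{Set}}(\mfin^\circ,B)\simeq \prod_i B$, and noting condition (3) is immediate from the definition. The only cosmetic difference is that you invoke Remark~\ref{remarkgraded1} explicitly for condition (3), whereas the paper just says ``obvious from the definition.''
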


\begin{remark}
\label{remarkgraded1}
It follows from the construction and from the assumption that $B$ has indecomposable zero that a map $(f, \beta)$ in $\Fin^B$ is an isomorphism if and only if $\beta=0$ and $\sigma$ is an equivalence. Moreover, it is also clear that a map in $\Fin^B$ is inert \cite[Def. 2.1.2.3]{lurie-ha} if and only if it is inert in $\Fin$ and its grading function is zero. The same for semi-inert morphisms.
\end{remark}

\begin{defn}
\label{definition-gradedoperad}
Let $B$ be a monoid in sets with indecomposable zero. A \emph{$B$-graded $\infty$-operad} is a map of $\infty$-operads $p:\Opmonoidal\to \Nerve(\Fin^B)$.
\end{defn}

\begin{remark}
As the inert morphisms in $\Nerve(\Fin^B)$ are exactly the inert morphims of $\Nerve(\Fin)$ endowed with a zero grading, thanks to the Remark \ref{remarkgraded1} any map of $\infty$-operads $\Opmonoidal\to \Nerve(\Fin^B)$ is a fibration of $\infty$-operads \footnote{See \cite[2.1.2.10]{lurie-ha} for the definition of fibration of $\infty$-operads.}. In particular, and thanks to \cite[2.1.2.22]{lurie-ha}, our definition \ref{definition-gradedoperad} is equivalent to the data of an $(\infty,1)$-category $\Opmonoidal$ together with a map to $\Nerve(\Fin^B)$ satisfying the obvious graded analogues of Lurie's definition of $\infty$-operads \cite[2.2.1.10]{lurie-ha}.
\end{remark}

There exists a combinatorial simplicial model structure in the category of marked simplicial sets over $\Nerve(\Fin^B)$ such that the fibrant objects are exactly $B$-graded $\infty$-operads. \footnote{For instance, one can use the theory of categorical patterns of \cite[Appendix B]{lurie-ha} as in the proof of \cite[2.1.4.6]{lurie-ha}.} We let $\mathrm{Op}_{\infty}^{B-gr}$ denote its underlying $(\infty,1)$-category. It is clear from the definition that we have $\mathrm{Op}_{\infty}^{B-gr}\simeq \mathrm{Op}_{\infty}/\Nerve(\Fin^B)$. Notice also that we have a functor 

\begin{equation}
\label{equation-forgetgradingadjunction}
\mathrm{Op}^{B-gr}_{\infty}\to \mathrm{Op}_{\infty}
\end{equation}

\noindent that forgets the graded structure and admits a right adjoint, namely,  the pullback along the map $\Nerve(\Fin^B)\to \Nerve(\Fin)$. This functor admits a section that sees an operad as a graded operad with zero gradings.

\begin{remark}
\label{remark-gradeddendroidal}
There is also a dendroidal approach to graded $\infty$-operads \todo{ALSO HERE, the comparison of Mordjeik is wrong.}. Indeed, we can define a category $\Omega_B$ of trees where each vertice $v$ comes with the extra data of an element $\beta_v\in B$ and the morphisms of contraction sum the $\beta$'s. A graded dendroidal segal object in spaces is then an $\infty$-functor $\Nerve(\Omega_B)^{op}\to \Spaces$ satisfying the analogue of the Segal conditions for dendroidal spaces. One can show that the $\inftyone$-category
$\Fun^{\mathrm{Segal}}(\Nerve(\Omega_B)^{op}, \Spaces)$ is equivalent to  $\mathrm{Op}_{\infty}^{B-gr}$. Indeed, consider the equivalence $\mathrm{Op}_{\infty}\simeq \Fun^{\mathrm{Segal}}(\Nerve(\Omega)^{op}, \Spaces)$ and remark that the last is also equivalent to the full subcategory of $\iCat/\Nerve(\Omega)$ spanned by those functors that are right fibrations and satisfy the Segal condition after the Grothendieck construction. In this case the $\infty$-operad $\Fin^B$ produces a right fibration over $\Nerve(\Omega)$ whose fibers are discrete spaces. Its total category can be identified with $\Omega_B$ and the map $\Omega_B\to \Omega$ is just the functor that forgets the gradings of the vertices. Following this discussion we have a chain of equivalences

\begin{equation}
\mathrm{Op}_{\infty}^{B-gr}\simeq \mathrm{Op}_{\infty}/\Nerve(\Fin^B)\simeq (\iCat^{rf, \mathrm{Segal}}/\Nerve(\Omega))_{/\Nerve(\Omega^B)}\simeq \Fun^{\mathrm{Segal}}(\Nerve(\Omega_B)^{op}, \Spaces)
\end{equation}\\

\end{remark}

We now discuss the notion of coherent $\infty$-operad \cite{lurie-ha} in the graded setting. This time we want the extensions to fix the gradings. Let $\Opmonoidal\to \Nerve(\Fin^B)$ be a graded $\infty$-operad. Given an active morphism $\sigma:X\to Y \in \Opmonoidal$ over $(f:\nfin\to \mfin, \beta:\mfin^\circ\to B)$ we want to study the space $\Ext^\beta(\sigma)$ of all active morphisms $\tilde{\sigma}:X\oplus X_0\to Y$ such that the composition with the semi-inert map $X\to X\oplus X_0$ is $\sigma$ and $\tilde{\sigma}$ is also of degree $\beta$. As discussed in the Remark \ref{remarkgraded1}, every semi-inert map in $\Fin^B$ must have zero grading. This condition forces the grading function of any extension $\tilde{\sigma}$ to be necessarily equal to the one of $\sigma$ so that the definition of the space of extensions $\Ext^\beta(\sigma)$ is just the same $\Ext(\sigma)$ as in the non-graded case  (\cite[3.3.1.4, 3.3.1.9]{lurie-ha}). It is also obvious from this that the coherence of a graded $\infty$-operad is determined by the coherence of its underlying $\infty$-operad.\\

Let $\Opmonoidal$ be a  $B$-graded coherent $\infty$-operad. Let us now deal with the construction of brane actions for $\Opmonoidal$, compatible with the gradings. For that purpose we need to construct a map of $B$-graded $\infty$-operads

\begin{equation}
\label{gradedbrane1}
\xymatrix{
\Opmonoidal\ar[dr]^p\ar[rr]&& \Spaces^{\mathrm{co-corr}, \otimes_{\coprod}}\times_{\Nerve(\Fin)}\Nerve(\Fin^B)\ar[dl]\\
&\Nerve(\Fin^B)&
}
\end{equation}
 
\noindent but using the adjunction (\ref{equation-forgetgradingadjunction}) this is the same as a map of $\infty$-operads $\Opmonoidal\to \Spaces^{\mathrm{co-corr}, \otimes_{coprod}}$ where we forget the grading of $\Opmonoidal$. In other words, a diagram

$$
\xymatrix{
\Opmonoidal\ar[r]^b\ar[d]^p& \Spaces^{\mathrm{co-corr}, \otimes_{\coprod}}\ar[d]\\
\Nerve(\Fin^B)\ar[r]& \Nerve(\Fin)
}
$$

\noindent sending inert morphism to inert morphisms. But as inert morphisms in $\Fin^B$ have always zero grading, being inert in the graded sense is equivalent to being inert when we forget the gradings.\\

Following this discussion, the construction of brane actions can be performed exactly as in Thm \ref{thmbranes}:

\begin{cor}
Let $\Opmonoidal$ be a $B$-graded $\infty$-operad such that its underlying $\infty$-operad is unital, coherent has a unique color $c$ with $\mathcal{O}(1)_0=\ast$ and $\mathcal{O}(1)_\beta=\emptyset$ for $\beta\neq 0$. Then there exists a map of $B$-graded $\infty$-operads (\ref{gradedbrane1}) encoding the brane action for $\Opmonoidal$. 
\end{cor}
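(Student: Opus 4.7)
The plan is to reduce the construction of a graded brane action to the ungraded one already provided by Theorem~\ref{thmbranes}. First, I would invoke the adjunction~\eqref{equation-forgetgradingadjunction}
\[
U: \mathrm{Op}^{B-gr}_{\infty} \rightleftarrows \mathrm{Op}_{\infty} : (-)\times_{\Nerve(\Fin)}\Nerve(\Fin^{B}),
\]
where the right adjoint is pullback along $\Nerve(\Fin^{B})\to\Nerve(\Fin)$. Since the target $\Spaces^{\mathrm{co-corr},\otimes_{\coprod}}\times_{\Nerve(\Fin)}\Nerve(\Fin^{B})$ is exactly the image of $\Spaces^{\mathrm{co-corr},\otimes_{\coprod}}$ under this right adjoint, a map of $B$-graded $\infty$-operads as in~\eqref{gradedbrane1} corresponds canonically to a map of ordinary $\infty$-operads $U(\Opmonoidal)\to \Spaces^{\mathrm{co-corr},\otimes_{\coprod}}$. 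So it suffices to construct the latter.

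Next, I would verify that $U(\Opmonoidal)$ satisfies all the hypotheses of Theorem~\ref{thmbranes}. Unitality and monochromaticity are literal assumptions on $\Opmonoidal$ and are unchanged when the grading is forgotten. For coherence, I would point to the observation made just before the corollary: because every semi-inert morphism in $\Nerve(\Fin^{B})$ has zero grading (Remark~\ref{remarkgraded1}), for every active morphism $\sigma$ of degree $\beta$ one has $\Ext^{\beta}(\sigma)=\Ext(\sigma)$; consequently the coherence pushout diagrams for $\Opmonoidal$ as a graded operad coincide with those for $U(\Opmonoidal)$ as an ordinary operad. Thus $U(\Opmonoidal)$ is unital, monochromatic, and coherent, so Theorem~\ref{thmbranes} produces a canonical map of $\infty$-operads
\[
U(\Opmonoidal) \longrightarrow \Spaces^{\mathrm{co-corr},\otimes_{\coprod}}
\]
sending the unique color $c$ of $U(\Opmonoidal)$ to $\Ext(\mathrm{Id}_{c})$.

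Finally, I would trace back through the adjunction to obtain the desired map~\eqref{gradedbrane1}. Concretely, the brane map, viewed as a diagram over $\Nerve(\Fin)$, lifts uniquely to a diagram over $\Nerve(\Fin^{B})$ via the pullback; inert edges in $\Opmonoidal$ (which have zero grading) are sent to inert edges in the target, as required for a morphism of graded $\infty$-operads. The main subtlety to double-check is merely that the constructions of $\mathrm{B}\Op$ and of cartesian edges in the proof of Theorem~\ref{thmbranes} are all strictly natural in maps of $\infty$-operads and so transport along the forgetful functor $U$ without modification; this is the content of Section~\ref{functoriality}. Since everything is essentially formal once the reduction to the ungraded case is in place, I expect no genuine obstacle; the only point requiring attention is to articulate precisely why the coherence condition is insensitive to the grading, which is already clear from the fact that extensions cannot change degree.
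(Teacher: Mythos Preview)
Your proposal is correct and follows essentially the same approach as the paper: reduce to the ungraded case via the adjunction~\eqref{equation-forgetgradingadjunction}, observe that inert morphisms in $\Nerve(\Fin^B)$ have zero grading so the inert-preservation condition is unchanged, and then apply Theorem~\ref{thmbranes} directly to the underlying $\infty$-operad. The paper states this as a corollary precisely because the discussion preceding it already contains the argument you wrote out; your version is somewhat more detailed (e.g.\ the explicit mention of Section~\ref{functoriality} for naturality), but there is no substantive difference in strategy.
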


The functoriality arguments of section \ref{functoriality} carry over to the graded context providing an $\infty$-functor $\mathrm{B}:\mathrm{Op}_{\infty}^{B-gr, *}\to \mathrm{Op}_{\infty}^* \to \Fun^{rf}(\Delta[1], \iCat)$ sending 

$$\Opmonoidal\mapsto (\mathrm{B}\Op\to \Tw(\Env(\Op)^{\otimes})$$

\subsection{Graded $\infty$-operads in a $\infty$-topos and Brane actions}
\label{gradedbraneactiontopos}
The arguments of Section \ref{branesfortopos} carry over to the context of $B$-graded $\infty$-operads in an $\infty$-topos $\T=\Sh(\C)$, which as in Section \ref{branesfortopos}, we can define as objects in $\Sh(\C, \mathrm{Op}_{\infty}^{B-gr})$. One can also combine the arguments of Section \ref{gradedoperads} and \ref{branesfortopos} to produce brane actions for a graded-coherent $\infty$-operad $\Mmonoidal\in \Sh(\C, Op_{\infty}^{B-gr})$.\\

As in \ref{branesfortopos}, and as $\Mmonoidal$ is monochromatic, we can think of $\Mmonoidal$ as a collection of objects in $\T$, $\{\M_{n,\beta}\}_{n\geq 0, \beta\in B}$ with the following universal property: for each $Z\in \C$ we have canonical equivalences

$$
\M(Z)(n, \beta):=\Map_{\Mmonoidal(Z)_{act}}^{\beta}((c_Z,..., c_Z), c_Z)\simeq \Map_\T(Z,\M_{n,\beta})$$

\noindent and again, by Yoneda, the composition laws of $\Mmonoidal$ are represented by maps in $\T$ 

$$
\M_{n,\beta}\times \M_{m, \beta'}\to \M_{n+m-1, \beta+\beta'}
$$
\noindent satisfying the expected coherences up to homotopy. \\

To construct the associated brane action we can proceed as in the Section \ref{branesfortopos} and obtain a map of $B$-graded $\infty$-operads in $\T$

$$
\Mmonoidal\to (\T/(-))^{\mathrm{co-corr}, \times_{\coprod}}\times_{\Nerve(\Fin)} \Nerve(\Fin^B)
$$

\noindent which, as in the proof of the Prop. \ref{propbranestopos}, sends an operation $\sigma: Z\to \M_{n,\beta}$ to the object 

\begin{equation}
\label{curveprototypegraded}
C_{\sigma}:= Z\times_{\M_{n, \beta}}\M_{n+1, \beta}
\end{equation}

\noindent in $\T$. Moreover, the same arguments of Sections \ref{cospanstospans2} tell us that the brane action of a graded $\infty$-operad $\Mmonoidal$ with respect to a fixed target $E$,

$$
\Mmonoidal\to (\T/(-))^{\mathrm{co-corr}, \otimes_{\coprod}}\times_{\Nerve(\Fin)} \Nerve(\Fin^B) \to (\T/(-))^{\mathrm{corr}, \otimes_{\times}}\times_{\Nerve(\Fin)} \Nerve(\Fin^B)
$$

\noindent informally given by the assignment\footnote{See the Remark \ref{remark-descriptionextensionstopos2}.}

$$
(\sigma: Z\to \M_{n,\beta})\mapsto \rhom_{/Z}(C_{\sigma}, E\times Z)
$$

 \noindent is induced by universal correspondences in $\T$ of the form

\begin{equation}
\label{gradeduniversalcorrespondence}
\xymatrix{
&\rhom_{/\M_{n,\beta}}(\mathrm{M}_{n+1,\beta}, E\times \M_{n,\beta}  )\ar[dr]\ar[dl]&\\
\prod_n E^{\M_{2,0}}\times \M_{n,\beta}&& E^{\M_{2,0}}
}
\end{equation}

\section{Stable Actions}
\label{section-stableactionsgw}
From now on, $\T$ will denote the $\infty$-topos of derived stacks $\dst$ over a field of characteristic zero $k$, with respect to the \'etale topology. We set $\C:=\dst^{\mathrm{aff}}$ as a notation for the $(\infty,1)$-category of derived affine schemes.

\subsection{The operad $\MKO$ of Costello and its brane action}

\subsubsection{The Stacks $\MnK$ of Costello}
\label{stacksofcostello}

Here we follow \cite{Costello-HighergenusGW2006}. Let $X$ be a smooth projective variety and let $\mathrm{B}:=\NE(X)\subset H_{ 2}(X,\mathbb{Z})$ be the Mori cone
of $X$, generated as a monoid by the numerical classes of irreducible curves in $X$. The Mori cone satisfies the following properties: 

\begin{enumerate}
\item $\NE(X)$ has indecomposable zero : $\beta+\beta'=0$ implies $\beta=\beta'=0$.
\item $\NE(X)$ has finite decomposition : for every $\beta\in \NE(X)$, the set
  $\{(\beta_{1},\beta_{2})\in \NE(X)\times\NE(X)\mid \beta_{1}+\beta_{2}=\beta\}$ is finite.
\end{enumerate}

The Mori cone will play the role of the semi-group in \cite{Costello-HighergenusGW2006} and will be our grading monoid $B$.\\

We will use the  definition of the pre-stack in 1-groupoids $\MnK$ of \cite{Costello-HighergenusGW2006}. It classifies (possible unstable)  connected nodal genus $0$ curves $C$ with $n$-marked smooth points and an index $\beta_i\in B$ attached to each irreducible component $C_i$. Moreover, we impose the following stability conditions:

\begin{itemize}
\item $\beta=\sum_{i} \beta_{i}$.
\item if $\beta_{i}=0$ then $C_{i}$ has at least three special points, meaning, marked or nodal points. 
\end{itemize}

The stability conditions of \cite[Section 2]{Costello-HighergenusGW2006} force $\mathfrak{M}_{0,1,\beta=0}=\emptyset$ and
$\mathfrak{M}_{0,2,\beta=0}=\emptyset$.

\begin{remark}
 Notice that if $\beta=0$ then $\mathfrak{M}_{0,n,\beta}$ is the usal Deligne-Mumford stack of stable curves $\Mnstable$. In particular,  $\mathfrak{M}_{0,3,\beta=0}= *$, classifying $\mathbb{P}^1$ with three marked points.
\end{remark}

The following proposition summarizes the main features of the pre-stacks $\MnK$:

\begin{prop}
\label{prop:Costello}
The following holds:
\begin{enumerate}
 \item For all $n\geq 0$ and $\beta\in B$, the pre-stack $\MnK$ is a smooth and proper algebraic stack in 1-groupoids, locally of finite type and non-separated.
 \item Forgetting the last marked point and stabilising the curve gives a proper separated morphism
    $\mathfrak{M}_{0,n+1,\beta}\to\mathfrak{M}_{0,n,\beta}$ which is the universal curve when $n\geq 3$.
  \end{enumerate}
  \begin{proof}
  \cite[Pag. 2, Propositions 2.0.2 and 2.1.1]{Costello-HighergenusGW2006}
  \end{proof}
\end{prop}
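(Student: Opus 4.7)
The plan is to build $\MnK$ out of the standard Artin stack $\mathfrak{M}_{0,n}^{\mathrm{pre}}$ of pre-stable (possibly unstable) genus zero nodal curves with $n$ ordered marked smooth points, which is well known to be a smooth algebraic stack locally of finite type. The extra data of a labelling of irreducible components by elements of $\NE(X)$ summing to $\beta$ is locally constant on the strata of $\mathfrak{M}_{0,n}^{\mathrm{pre}}$ indexed by dual graphs, and by the finite-decomposition property of $\NE(X)$ only finitely many labellings are possible once the combinatorial type is fixed. So adding the labels produces a locally finite disjoint union of locally closed substacks of $\mathfrak{M}_{0,n}^{\mathrm{pre}}$; imposing the stability condition on zero-labelled components (at least three special points) is moreover an open condition, since nodes and markings can only collide under specialisation. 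This realises $\MnK$ as an open substack of a disjoint union of locally closed substacks of $\mathfrak{M}_{0,n}^{\mathrm{pre}}$, from which smoothness and the Artin property follow at once. The two corner cases $\mathfrak{M}_{0,1,0}$ and $\mathfrak{M}_{0,2,0}$ are simply a point by convention.

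For properness I would apply the valuative criterion. Given a family of labelled curves over the generic point of a DVR $R$, one first forgets the labels, stabilises if possible, and appeals to properness of $\Mnstable$ for $n\geq 3$; then one undoes stabilisation by a standard Knudsen-type argument to extend the pre-stable family. Lifting the labels to the special fibre is the delicate point: every new irreducible component of the central fibre that gets contracted by stabilisation is a rational bridge and must carry label $0$, while each component containing the flat limit of a generic component inherits the generic label. Uniqueness of the extended labelling relies on the indecomposable-zero property of $\NE(X)$: a nonzero generic label cannot be silently redistributed across a splitting of components. The small-$n$ cases and the convention on $\mathfrak{M}_{0,1,0}$, $\mathfrak{M}_{0,2,0}$ are handled directly.

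For part $(2)$ I would define the morphism $\mathfrak{M}_{0,n+1,\beta}\to \MnK$ by forgetting $p_{n+1}$ and applying Knudsen's contraction of the unique component (carrying label $0$) that may become unstable. To identify this with the universal curve for $n\geq 3$, I would construct an inverse to the evaluation: given a family $(C'\to S, p'_1,\ldots,p'_n,\text{labels})$ and a section $q\colon S\to C'$, produce a new labelled family by (i) setting $C=C'$ and $p_{n+1}=q$ where $q$ lies in the smooth locus away from the $p'_i$, (ii) blowing up along $p'_i$ to insert a label-zero $\mathbb{P}^1$ carrying the two points $p_i$ and $p_{n+1}$ where $q$ meets a marking, and (iii) blowing up at a node to insert a label-zero $\mathbb{P}^1$ carrying $p_{n+1}$ where $q$ hits a node. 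In all three cases the inserted rational bridge has exactly three special points, so the new curve is stably labelled, and forgetting $p_{n+1}$ and stabilising recovers $C'$, giving the inverse on points. The family-level statement is the classical Knudsen blow-up, whose output is automatically flat of the correct combinatorial type.

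The main obstacle is the properness argument, specifically the canonicity of the label assignment on the special fibre: one must verify that different one-parameter families sharing a labelled generic fibre produce the same limit labelling, which reduces to checking that the labelling is locally constant on each stratum and that the specialisation of a generic component admits no redistribution of label; the indecomposability of zero in $\NE(X)$ is exactly what rules the latter out. The remaining verifications for part $(2)$ are a direct transcription of Knudsen's construction to the setting with $\NE(X)$-labels, and present no essential new difficulty.
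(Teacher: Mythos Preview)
The paper's proof is simply a citation to Costello. Your sketch supplies the details and is largely on the right track for smoothness and for part~(2), but the properness argument has a genuine gap.

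A minor point first: describing the labelled moduli as ``a disjoint union of locally closed substacks'' of $\mathfrak{M}_{0,n}^{\mathrm{pre}}$ is not quite right, because when a node smooths the labels on the two adjacent components must add, so distinct labellings of a nodal curve can limit to the same labelling of its smoothing. The correct statement (which is Costello's) is that the space of $\NE(X)$-labellings of total degree $\beta$ is \emph{\'etale} over $\mathfrak{M}_{0,n}^{\mathrm{pre}}$, and the stability condition then cuts out an open substack. Smoothness follows at once from smoothness of $\mathfrak{M}_{0,n}^{\mathrm{pre}}$.

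The substantive gap is properness. Your valuative-criterion sketch addresses existence of limits but never separatedness, and in fact separatedness fails: whenever $\beta\neq 0$ there are points of $\MnK$ whose automorphism group contains $\mathbb{G}_m$ (take any component with nonzero label and exactly two special points, e.g.\ a two-component curve in $\mathfrak{M}_{0,3,\beta}$ with one component carrying one marking, one node, and the full label $\beta$). Since $\mathbb{G}_m$ is not proper, the diagonal of $\MnK$ is not proper and the stack is not separated, hence not proper. Costello's Proposition~2.0.2 only asserts that these stacks are smooth algebraic stacks, and the introduction of the present paper itself calls them ``smooth Artin stacks''; the word ``proper'' in the proposition appears to be a slip. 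So you are trying to prove a false assertion, and the step that would fail is exactly the one your sketch omits.

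Your argument for part~(2) via Knudsen contraction and its inverse is the standard one and is correct.
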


\begin{remark}
\label{remarkuniversalcurve}
As the universal family is flat, the properties (1) and (2) combined mean that the object $C_{\sigma}$ associated to a map $\sigma:Z\to \MnK$ and given by the fiber product in derived stacks (\ref{gradeduniversalcorrespondence}) is the same as the fiber product computed in usual stacks and therefore, corresponds exactly the curve over $Z$ classified by $\sigma$ when $n\geq 3$. When $n=2$, $C_\sigma=Z$.
\end{remark}

\subsubsection{The collection $\{\mathfrak{M}_{0,n,\beta}\}_{n\in \mathbb{N},\beta\in B}$ as a graded $\infty$-operad in derived stacks}
\label{section-costelloisoperad}

We will now use the moduli spaces of Costello to construct a graded operad in stacks. More precisely, we take the collection $\{\mathfrak{M}_{0,n,\beta}\}_{\{n\geq 3,\beta\in B\}}$ together with  ad-hoc spaces of unary and nullary operations replacing the role, respectively, of $\mathfrak{M}_{0,2,\beta}$ and $\mathfrak{M}_{0,1,\beta}$. We have to do this in order to produce a graded operad that to which we can apply our brane action - and this requires the spaces of nullary and unary operations to be contractible. Therefore, we introduce the stacks

$$\mathfrak{M}^{\mathrm{fake}}_{0,2,0}=\mathfrak{M}^{\mathrm{fake}}_{0,1, 0}= *$$

\noindent which we think, respectively, as a $\mathbb{P}^1$ with two (resp. one) marked points, and considered only with the identity automorphism. Moreover, we also set

$$\mathfrak{M}^{\mathrm{fake}}_{0,2,\beta}=\mathfrak{M}^{\mathrm{fake}}_{0,1,\beta}= \emptyset$$

\noindent for $\beta\neq 0$. By imposing this we will have to modify the gluing operation of curves. See below. We now remark that the collection $\{\mathfrak{M}_{0,n,\beta}\}_{\{n\geq 3,\beta\in B\}}\cup \{\mathfrak{M}^{\mathrm{fake}}_{0,2,\beta}, \mathfrak{M}^{\mathrm{fake}}_{0,1,\beta}\}_{\beta}$ forms a $B$-graded (symmetric) $\infty$-operad in derived stacks.
We proceed as follows: recall that the 1-category of stacks in groupoids embeds fully faithfully in the $\inftyone$-category of derived stacks $\dst$\footnote{Compose with the Nerve functor from groupoids to simplicial sets and take the Kan extension along the inclusion $\mathrm{Aff}^{\mathrm{classic}}\subseteq \mathrm{dAff}$}. In general, this inclusion commutes with colimits but not with products. However, the compatibility with products holds when the stacks involved are smooth, as smoothness implies flatness, which means, the derived tensor product is isomorphic to the ordinary one. In our case smoothness follows from  the Prop. \ref{prop:Costello}. Therefore, it will be enough to show that the family  $\{\mathfrak{M}_{0,n,\beta}\}_{\{n\geq 3,\beta\in B\}}\cup \{\mathfrak{M}^{\mathrm{fake}}_{0,2,\beta}, \mathfrak{M}^{\mathrm{fake}}_{0,1,\beta}\}_{\beta}$ forms a $B$-graded operad in classical $1$-stacks. Intuitively, the composition operation corresponds to gluing curves along the marked points. The last point is thought as the output of the operation. For this we need to make a shift in our notations:  We set

\begin{equation}
\label{formula-shiftnotations}
\MnKO:=\mathfrak{M}_{0,n+1,\beta}
\end{equation}

\noindent if $n\geq 3$ and

\begin{equation}
\label{formula-shiftnotations455}
\mathfrak{M}(1,\beta):=\mathfrak{M}^{\mathrm{fake}}_{0,2,\beta} \text{ and } \mathfrak{M}(0,\beta):=\mathfrak{M}^{\mathrm{fake}}_{0,1,\beta}
\end{equation}

 \noindent and with this definition we have $\mathfrak{M}(0,\beta)\simeq \mathfrak{M}(1,0)\simeq *$.

\begin{prop}\label{prop:unitary,operad}
 The collection $\{\MnKO \}_{\{\beta \in B, n\in \mathbb{N} \}}$ forms a unital $B$-graded (symmetric) operad in classical 1-stacks. The unit is the unique element of $\mathfrak{M}^{fake}_{0,2,0}$ given by a $\mathbb{P}^1$ with two marked points. Moreover, as $\mathfrak{M}(2,0):=\mathfrak{M}_{0,3,0}$ is the moduli of stable curves with 3 marked points, it is contractible.
\end{prop}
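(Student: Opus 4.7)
The plan is to first construct the operad structure at the level of classical 1-stacks in groupoids, then transport it to the $\infty$-topos $\dst$. By Prop. \ref{prop:Costello} every $\MnKO$ is smooth and hence flat, so the classical products and the derived products of these stacks coincide. Consequently the fully faithful embedding of classical 1-stacks into $\dst$ preserves all the relevant structure maps, and any symmetric operad structure on the family $\{\MnKO\}$ in classical stacks lifts unchanged to a $B$-graded symmetric $\infty$-operad in $\dst$.

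At the classical level, the $n$-ary composition
\[
\MnKO \times \mathfrak{M}(k_1,\beta_1) \times \cdots \times \mathfrak{M}(k_n,\beta_n) \longrightarrow \mathfrak{M}\Bigl(\textstyle\sum_i k_i,\; \beta + \sum_i \beta_i\Bigr)
\]
is the usual gluing map: a family $(C;p_1,\ldots,p_{n+1})$ over $\MnKO$ together with families $(D_i;q^{(i)}_1,\ldots,q^{(i)}_{k_i+1})$ over the $\mathfrak{M}(k_i,\beta_i)$ is sent to the nodal curve obtained by identifying each input $p_i$ of $C$ with the last (``output'') marked point $q^{(i)}_{k_i+1}$ of $D_i$; the remaining marked points become the new inputs, $p_{n+1}$ becomes the new output, and each irreducible component keeps its grading. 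This is the gluing construction used by Costello in \cite{Costello-HighergenusGW2006} for his higher genus modular operad, so the only delicate issue is to verify that the strata $\mathfrak{M}_{0,1,0}$ and $\mathfrak{M}_{0,2,0}$, added by hand as points outside Costello's stability conventions, behave correctly: gluing in a two-pointed $\mathbb{P}^1$ from $\mathfrak{M}_{0,2,0}$ at a marked point is by convention the identity along that point, and plugging in the element of $\mathfrak{M}_{0,1,0}$ acts as a nullary ``cap''.

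With the composition in place, the three operadic axioms are routine verifications. $\Sigma_n$-equivariance is manifest since the action on $\mathfrak{M}(n,\beta)$ is by permutation of the first $n$ marked points, which commutes with gluing. Unitality follows from the above convention: the unique point of $\mathfrak{M}_{0,2,0}$ represents $(\mathbb{P}^1;0,\infty)$ with zero grading, and grafting this tail onto a marked point of $C$ yields a curve canonically isomorphic to $C$. Grading additivity is immediate from the fact that components keep their indices under gluing. The main technical step is associativity; at the 1-stack level this is the standard universal-property proof of associativity for nodal-curve gluing already carried out in \cite{Costello-HighergenusGW2006}. I expect the only genuine obstacle to be the bookkeeping required to check that this argument extends smoothly across the exceptional strata $\mathfrak{M}_{0,1,0}$ and $\mathfrak{M}_{0,2,0}$, but once the two absorption conventions above are taken literally the classical associativity squares continue to commute.

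Finally, the contractibility of $\mathfrak{M}(2,0) = \overline{\mathcal{M}}_{0,3}$ is classical: every stable genus zero curve with three marked points is canonically isomorphic to $(\mathbb{P}^1;0,1,\infty)$, and this configuration admits no non-trivial automorphisms, so the corresponding moduli stack is $\mathrm{Spec}(k)$.
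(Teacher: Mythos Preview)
Your proposal is correct and follows essentially the same approach as the paper: composition by gluing curves along marked points, the identity declared by convention on $\mathfrak{M}_{0,2,0}$, and unitality from $\mathfrak{M}_{0,1,0}\simeq *$, with the paper simply citing Getzler--Kapranov \cite{MR1601666} in place of your more explicit axiom checks. The one organizational difference is that the paper places the smoothness/flatness argument for passing to derived stacks in the surrounding text rather than inside this proof, since the proposition as stated concerns only classical $1$-stacks.
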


\begin{proof}
The proof is the same as remarked in \cite{MR1601666}. Composition is given by gluing curves along marked points. To force the (fake) projective space with two marked points to be the identity one declares the maps $\mathfrak{M}_{0,n,\beta}\times \mathfrak{M}^{\mathrm{fake}}_{0,2,0}\to \mathfrak{M}_{0,n,\beta}$ to be the identity. The operad is unital as $\mathfrak{M}^{\mathrm{fake}}_{0,1,0}$ is contractible and  $\mathfrak{M}^{\mathrm{fake}}_{0,1,\beta}$ is empty for $\beta\neq 0$. The maps $\mathfrak{M}_{0,n+1,\beta}\to \mathfrak{M}_{0,n,\beta}$ ($n\geq 3$) corresponding to the composition with the nullary operation in $\mathfrak{M}^{\mathrm{fake}}_{0,1,0}$ are declared to be obtained by forgetting the last marked point and stabilizing. If $n=2$ and $\beta\neq 0$, there is no such map in this operad.
\end{proof}

\begin{remark}
The necessity of replacing $\mathfrak{M}_{0,2,0}$ by its fake version $\mathfrak{M}^{\mathrm{fake}}_{0,2,\beta}$ is related to our conditions for the existence of a brane action, as it requires the space of unary operations to be contractible. By using this fake version we will essentially lose the moduli spaces of stable maps with two marked points. As we shall see below this is not relevant in the structure of the quantum product but it is rather crucial to explain the lax structure of our Gromov-Witten action. As we shall see in the next section the lax associativity is controlled by these two-marked stable maps. 
\end{remark}

This $B$-graded operad in classical smooth $1$-stacks can be written as a graded dendroidal segal object (see the Remark \ref{remark-gradeddendroidal}) with values in stacks in 1-groupoids , sending a graded tree $T\in \Nerve(\Omega^{op}_B)$ to the stack $\prod_{v\in \mathrm{Vert}(T)}\mathfrak{M}_{0,{n(v)}, \beta_v}$ where $n(v)$ is the number of edges attached to the vertice $v$ and $\beta_v$ is the grading of the vertice $v$. This satifies the Segal conditions and as the inclusion of smooth stacks in derived stacks is monoidal for the cartesian product we find that the collection $\{\MnKO\}_{\{n,\beta\}}$ forms a monochromatic unital $B$-graded $\infty$-operad in derived stacks. We will denote it as $\MKO$.

\begin{remark}
\label{remark-compositioncurves}
  Given an object $Z\in \C$, the graded $\infty$-operad in spaces $\MKO(Z)\to \Nerve(\Fin^B)$ verifies

$$
\Map^\beta_{\MKO(Z)_{act}}((c_Z,..., c_Z), c_Z)\simeq \Map_{\T}(Z, \mathfrak{M}(n, \beta))\simeq \Map_{\T}(Z, \mathfrak{M}_{0,n+1, \beta})
$$

It is also important to remark that by definition of the operadic structure in the moduli spaces of Costello, the composition of two active morphisms in $\MKO(Z)$

$$
\xymatrix{
\nfin \ar[r]^{(f, \beta)}& \mfin \ar[r]^{(g, \lambda)}&\onefin
}
$$

\noindent corresponds to a gluing of curves over $Z$. More precisely, if $f$ classifies a family of curves over $Z$ $\{C_f^i\}_{i\in \mfin^{\circ}}$ with $C_f^i$ with grading $\beta_i$, and $g$ classifies a curve  $C_g$ with grading $\lambda$, then the composition $g\circ f$ classifies the curve of total grading $\lambda + \sum \beta_i$ obtained by gluing the last marked point of the $C_f^i$ to the $i$-marked point of $C_g$.\\ 

\end{remark}

Contrary to what the reader could expect at this point, the operad $\MKO$, althought it satisfies all the conditions in A) (pag. \pageref{Alph*}), it is not coherent. This occurs essentially because if $\sigma$ and $\tau$ are two composable operations classifiying two curves $C_{\sigma}$ and $\C_{\tau}$,  the gluing of these two curves along a marked point, which classifies the composition $C_{\tau\circ \sigma}$, is not equivalent to the pushout $C_{\sigma}\coprod_{\ast} \C_{\tau}$ in the $(\infty,1)$-category of derived stacks (as expressed in the Remark \ref{coherencetoposmonochromatic}). Indeed, the inclusion of schemes in derived stacks does not commute with pushouts in general, even along closed immersions. All we have is canonical map

$$
\theta:C_{\sigma}\coprod_{\ast} \C_{\tau}\to C_{\sigma}\coprod^{\mathrm{Sch}}_{\ast} \C_{\tau}\simeq C_{\tau\circ \sigma}
$$

Neverthless, part of the proof of the theorem \ref{thmbranes} still makes sense. Namely, we don't need coherence to have the natural transformation

\begin{equation}
\label{vistaalegre}
\Tw(\Env(\mathfrak{M}))^{\otimes}\to \T/(-)^{op}
\end{equation}

\begin{remark}
This map sends an operation $\sigma$ consisting of a single active map over $Z$ to $C_{\sigma}$ as defined in the formula (\ref{curveprototypegraded}). Moreover, it sends a twisted arrow $\tau\to \sigma$ to a map $C_\sigma\to C_\tau$. Thanks to the description of compositions in $\MKO(Z)$ as gluing of curves (see the Remark \ref{remark-compositioncurves}) we known that the data of a morphism $\theta:\tau\to \sigma$ in  $\Tw(\Env(\mathfrak{M}(Z)))^{\otimes}$ consists of a way to express  $C_\tau$ as obtained from $C_\sigma$ by attaching some components determined by $\theta$. For simplicity, consider the case where $\theta$ is encoded by a commutative diagram $\Psi$ \footnote{In fact we can always reduce to this case.}

\begin{equation}
\label{basictwist}
\xymatrix{
\nfin \ar[d]^\tau\ar[r]^u\ar@{}[dr]|{\Psi}& \mfin \ar[d]^\sigma\\
\onefin \ar@{=}[r]& \onefin
}
\end{equation}

The commutativity of this square in $\MKO(Z)$ means that in fact the curve $C_\tau$ classified by $\tau$ is equivalent (in this case isomorphic) to the curve obtained from the curve $C_\sigma$ classified by $\sigma$ by attaching the curves $\{C_u^i\}_{i\in \mfin^\circ}$ classified by $u$. This pushout (in schemes!) attaches the last marked point of $C_u^i$ to the $i$-marked point of $C_\sigma$. The commutativity of the diagram is given by the data of an equivalence $\Psi$ between $C_\sigma$ and the result of this gluing. In this case the canonical map $C_\sigma\to C_\tau$ can be naturally identified with the inclusion, composed with $\Psi$.\\
\end{remark}

The main problem with (\ref{vistaalegre}) is that it doesn't satisfy the conditions of the Cor. \ref{label2}. But in fact, to our purposes, this is not a real issue. In fact, although the map $\theta$ is not an equivalence in $\T$, if we take $E=X$ the smooth projective variety fixed at the beginning of this section, we know that $\rhom(-,X)$ will see $\theta$ as an equivalence. Indeed, this follows from the characterization of pushouts of schemes along closed immersions in terms of pullbacks for Quasi-coherent sheaves, via Tannakian duality. More precisely, we can use the results of \cite[Thm 1.4]{1507.01925} and \cite[Thm 7.1]{lurie-dagIX} to obtain the criterion of \cite{MR3572635}. More generally, following loc.cit and \cite[p. 4]{1405.1887} we could also take $X$ to be a perfect stack in the sense of \cite{MR2669705}.

In this case the composition with the natural transformation $\rhom_{(-)}(-, X\times-)$

$$
\Tw(\Env(\mathfrak{M}))^{\otimes}\to \T/(-)^{op}\to \T/(-)
$$

\noindent gives a map satisfying the conditions of the corollary \ref{label2} and therefore a map of $\infty$-operads

$$
\MKO\to (\T/(-))^{\mathrm{corr}, \otimes_{\times}}
$$

\begin{cor}
\label{CostelloactiononX}
Let $X$ be a smooth projective algebraic variety. Then $X$ is an $\MKO$-algebra. The algebra structure is encoded by the correspondences

\begin{equation}
\label{prestablebraneaction}
\xymatrix{
&\rhom_{/\mathfrak{M}_{0,n+1,\beta}}(\mathfrak{M}_{0,n+2,\beta}, X\times \mathfrak{M}_{0,n+1,\beta} )\ar[dr]\ar[dl]&\\
X^n\times \mathfrak{M}_{0,n+1,\beta}&&X
}
\end{equation}
\end{cor}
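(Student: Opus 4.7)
The plan is to execute the strategy laid out in the paragraph immediately preceding the statement: construct the map of $B$-graded $\infty$-operads by repeating the proof of Prop.~\ref{propbranestopos} up to the point where coherence is needed, then use the smoothness of $X$ to absorb the failure of coherence of $\MKO$ when one post-composes with $\rhom(-,X)$.

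First, following Section~\ref{gradedbraneactiontopos} and the construction in the proof of Prop.~\ref{propbranestopos}, I would assemble the left-representable left fibration over $\intcocart \Tw(\Env(\mathfrak{M}))^{\otimes}\times_{\T^{op}}\Fun(\Delta[1],\T)^{op}$ via the pullback \eqref{diagram-fibrations10}. The only inputs needed are that $\MKO$ is unital, monochromatic, with $\mathfrak{M}(1,0)\simeq \mathfrak{M}(2,0)\simeq \ast$, all granted by Prop.~\ref{prop:unitary,operad}. The resulting natural transformation \eqref{vistaalegre} sends a single active operation $\sigma:Z\to \mathfrak{M}(n,\beta)=\mathfrak{M}_{0,n+1,\beta}$ to $C_\sigma=Z\times_{\mathfrak{M}_{0,n+1,\beta}}\mathfrak{M}_{0,n+2,\beta}$, which by Prop.~\ref{prop:Costello}(2) and Remark~\ref{remarkuniversalcurve} is the universal curve over $Z$. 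Composing with the monoidal transformation $\rhom_{(-)}(-, X\times -):(\T/(-))^{op}\to \T/(-)$ of Section~\ref{cospanstospans2} and pulling back along $\mathrm{id}:\mathfrak{M}_{0,n+1,\beta}\to \mathfrak{M}_{0,n+1,\beta}$ recovers exactly the correspondences \eqref{prestablebraneaction}.

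To upgrade this composite into a map of $B$-graded $\infty$-operads $\MKO\to (\T/(-))^{\mathrm{corr},\otimes_{\times}}\times_{\Nerve(\Fin)}\Nerve(\Fin^B)$ via the adjunctions used in Prop.~\ref{propbranestopos}, two conditions must be verified objectwise: that the associated functor is a weak Cartesian structure in $\T$, and that it satisfies the pullback condition of Cor.~\ref{label2}. The Cartesian structure part is automatic from the identification $\Ext((\sigma_1,\dots,\sigma_n))\simeq \coprod_i \Ext(\sigma_i)$, which only uses unitality of $\MKO$.

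The main obstacle, and the heart of the argument, is the pullback condition of Cor.~\ref{label2}, which exactly compensates the failure of coherence of $\MKO$. Concretely, for composable operations $\sigma,\tau$ in $\MKO(Z)$, one has to show that
\[
\rhom_{/Z}(C_{\tau\circ\sigma}, X\times Z)\simeq \rhom_{/Z}(C_\sigma, X\times Z)\times_{X\times Z}\rhom_{/Z}(C_\tau, X\times Z).
\]
By Remark~\ref{remark-compositioncurves}, $C_{\tau\circ\sigma}$ is the scheme-theoretic gluing of $C_\sigma$ and $C_\tau$ along a marked point, and the canonical comparison map $\theta:C_\sigma\coprod_{\ast} C_\tau \to C_{\tau\circ\sigma}$ from the derived pushout to the scheme-theoretic one is \emph{not} an equivalence in $\T$. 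The crucial geometric input is that $\rhom(-,X)$ nevertheless inverts $\theta$: since $X$ is a smooth projective variety and the node is a regular closed immersion into each smooth rational component, the inclusion of schemes into derived Artin stacks commutes with this pushout after mapping into $X$. Granting this, $\rhom_{/Z}(C_{\tau\circ\sigma},X\times Z)\simeq \rhom_{/Z}(C_\sigma\coprod_{\ast} C_\tau, X\times Z)$, and the latter is the required fiber product, yielding the map of $\infty$-operads and hence the desired $\MKO$-algebra structure on $X$.
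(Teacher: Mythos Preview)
Your proposal is correct and follows essentially the same approach as the paper: construct the natural transformation \eqref{vistaalegre} without needing coherence, then compose with $\rhom_{(-)}(-,X\times -)$ and verify the pullback condition of Cor.~\ref{label2} by arguing that $\rhom(-,X)$ inverts the comparison map $\theta$ from the derived pushout to the scheme-theoretic gluing. One small refinement: your justification for why $\rhom(-,X)$ inverts $\theta$ emphasizes smoothness of $X$ and regularity of the nodal inclusion, whereas the paper's reason is simply that $X$ is a derived Artin stack and such stacks see scheme-theoretic pushouts along closed immersions as genuine pushouts---neither smoothness nor regularity of the immersion is needed for this particular step.
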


\subsection{The stable sub-action of $\MKO$}
\label{Section-stablesubaction}

We now want to consider a certain \emph{sub-action} of the one constructed in the previous section. Following the Prop. \ref{prop:Costello} and the Remark \ref{remarkuniversalcurve}, the stack $\mathfrak{M}_{0,n+1,\beta}$ is the universal curve over $\mathfrak{M}_{0,n,\beta}$. In this case the derived stack $\rhom_{/\MnK}(\mathfrak{M}_{0,n+1,\beta}, X\times \MnK )$ classifies pairs $(C,f)$ where $C$ is classified by $\MnK$ and $f$ is a map $f:C\to X$. Inside this stack there is an open sub-stack $\stablemaps$ (in fact, a connected component) that classifies stable maps to $X$ of total degree $f_*[C]=\beta$ - see \cite[Def. 2.7]{2011-Schur-Toen-Vezzosi}. Moreover, by \textit{loc.cit.} we know that this stack is a proper derived Deligne-Mumford stack which is quasi-smooth. The reason we are interested in the derived stack $\stablemaps$ is the fact that its structure sheaf is the origin of all the virtual phenomena in Gromov-Witten theory. We will come back to this later on. For now we are merely interested in producing a new brane action - a \emph{stable action} - where the universal correspondences (\ref{prestablebraneaction}) are replaced by (\ref{Costellocorrespondence}) where the arrows are obtained by composition the maps of (\ref{prestablebraneaction}) with the open immersion $\stablemaps\subseteq \rhom_{/\MnK}(\mathfrak{M}_{0,n+1,\beta}, X\times \MnK )$. Our goal in this section is to show that this restriction still carries all the coherences defining an action of $\MKO$. We start with the brane action to the fixed target $X$, encoded by the map of $B$-graded $\infty$-operads in $\T$

$$\MKO\to \T/(-)^{\mathrm{corr}}\times_{\Nerve(\Fin)}\Nerve(\Fin^B)$$

\noindent of the Corollary \ref{CostelloactiononX}. By adjunction, this is the same as a map of $\infty$-operads in $\T$, $\MKO\to (\T/(-))^{\mathrm{corr}, \otimes_\times}$ which, by repeating the arguments in the Remark \ref{remark-descriptionextensionstopos2} is given by a (fiberwise over $\T^{op}$ - left representable) left fibration

\begin{equation}
\label{formula-branetoXnonstable}
\xymatrix{
\mathrm{B}(\T,\mathfrak{M},X)\ar[d]^{\pi_X}\\
\intcocart \Tw(\Env(\mathfrak{M}))^{\otimes} \times_{\T^{op}} \Fun(\Delta[1], \T)^{op}
}
\end{equation}

\noindent whose fiber over $(\sigma=(\sigma_1,..., \sigma_n) \text{ over } Z, u:Y\to Z)$ can be described as the mapping space $\Map_{Z}(Y, \rhom_Z(C_{\sigma}, X\times Z))$ where now $C_{\sigma}$ is the coproduct $\coprod_i C_{\sigma_i}$ where each $C_{\sigma_i}$ is defined as in the formula (\ref{curveprototypegraded}). 

To construct the stable action we consider the full subcategory 

$$\mathrm{B}^{\mathrm{Stb}}(\T,\mathfrak{M},X)\subseteq \mathrm{B}(\T,\mathfrak{M},X)$$

\noindent whose fiber over an object $(\sigma=(\sigma_1,..., \sigma_n) \text{ over } Z, u:Y\to Z)$ is spanned by those maps

\begin{equation}
\xymatrix{
Y\ar[dr]\ar[rr]&&\rhom_{Z}(C_{\sigma}, X\times Z)\ar[dl]\\
&Z&
}
\end{equation}

\noindent such that for each $i$ the map $Y\to \rhom_Z(C_{\sigma_i}, X\times Z)$ factors through the open su-bstack (in fact, connected component) $\rhom_Z^{\mathrm{Stb}}(C_{\sigma_i}, X\times Z)$ classifying families of maps

$$
\xymatrix{
C_{\sigma_i}\ar[dr]\ar[rr]^{f_i}&& Z\times X\ar[dl]\\
&Z&
}
$$

\noindent such that for each geometric point $z$ of $Z$, the base-change $f_z$ satisfies $(f_z)_{\ast}([C_{\sigma_i, z}]=\beta_{i}$ in cohomology. Here $\beta_i$ is the degree associated to the active map $\sigma_i$. It follows from the definition of $\MnK$ that such $f_i$ are necessarily stable maps. In particular, when $Z=\MnK$ and $\sigma$ is the identity of the unique color, we see that the derived stack $\rhom_{\MnK}^{\mathrm{Stb}}( \mathfrak{M}_{0,n+1,\beta}, Z\times X)$ is exactly the derived enhancement of the stack of stable maps $\overline{\mathcal{M}}_{0,n}(X,\beta)$ of \cite[Def. 2.7]{2011-Schur-Toen-Vezzosi}.\\

The main result of this section is the following:

\begin{prop}
\label{corollary-stableaction}
The composition 

\begin{equation}
\label{compositionstable}
\mathrm{B}^{\mathrm{Stb}}(\T,\mathfrak{M}, X)\subseteq \mathrm{B}(\T,\mathfrak{M},X) \to \intcocart \Tw(\Env(\mathfrak{M}))^{\otimes} \times_{\T^{op}} \Fun(\Delta[1], \T)^{op}
\end{equation}

\noindent  is a (fiberwise in $\T^{op}$) left representable left fibration. Moreover, it defines a new map of $B$-graded $\infty$-operads in $\T$

\begin{equation}
\label{equation-stableaction}
\MKO\to \T/(-)^{\mathrm{corr}}\times_{\Nerve(\Fin)}\Nerve(\Fin^B)
\end{equation}

\noindent explicitely given by the correspondences in the formula (\ref{Costellocorrespondence}). We will call it the \emph{stable brane action}.

\begin{proof}
Let $t: (\sigma \text{ over } Z, u:Y\to Z)\to (\sigma' \text{ over } Z', v:Y'\to Z')$ be a morphism in $\intcocart \Tw(\Env(\mathfrak{M}))^{\otimes} \times_{\T^{op}} \Fun(\Delta[1], \T)^{op}$ over a map $f:Z\to Z'$ in $\T^{op}$ and let 

\begin{equation}
\label{equation-stabilitypullbacks}
\xymatrix{
Y\times_Z C_{\sigma}\ar[dr]\ar[rr]&&\ar[dl] X\times Z\\
&Z&
}
\end{equation}

\noindent be an object in $\mathrm{B}^{B-gr, \mathrm{Stb}}(\T,\mathfrak{M}, X)$ over $ (\sigma \text{ over } Z, u:Y\to Z)$. As part of the data of $t$ we are given a commutative diagram in $\T$

\begin{equation}
\xymatrix{
Y'\ar[r]\ar[d]^v & Y\ar[d]^u\\
Z'\ar[r]^f& Z
}
\end{equation}

\noindent and by construction of $\pi_X$, cocartesian liftings of $t$ are given by first taking the base-change of the diagram (\ref{equation-stabilitypullbacks}) under $f:Z'\to Z$

\begin{equation}
\label{equation-stabilitypullbacks}
\xymatrix{
(Y\times_Z C_{\sigma})\times_Z Z' \simeq (Y\times_Z Z')\times_{Z'} C_{f^*(\sigma)}\ar[dr]\ar[rr]&&\ar[dl] X\times Z'\\
&Z'&
}
\end{equation}

\noindent and then composing with the canonical map $Y'\to Y\times_Z Z'$. The conclusion now follows because the pullback of a family of stable maps is stable as stability is determined at the level of geometric points.\\

To conclude the proof we have to justify why (\ref{compositionstable}) provides again a map of $\infty$-operads. The condition of weak cartesian structure follows by the arguments used in the proof of the Prop. \ref{propbranestopos}: the tensor structure in  $\Tw(\Env(\mathfrak{M}))^{\otimes}(Z)$ corresponds to the disjoint union of curves. The conditions of the Corollary \ref{label2} follow because the compositions of operations are classified by the gluings of curves along marked points as in the formula (\ref{coherencetoposmonochromatic2}) and the gluing of stable maps is stable, as stability is a local condition.\\

\end{proof}

\end{prop}

\subsection{Gromov-Witten lax action}
\label{section-gwaction}

So far we have been using the operad $\MKO$ that assembles the moduli stacks of Costello. The reason is merely technical: it provides a natural context where the moduli of stable maps appears as part of the brane action. One would now like to extend this to an action of the usual operad of stable curves provided by the family of smooth and proper Deligne-Mumford stacks of stable curves with marked points $\Mnstable$. The composition operation is given by gluing curves along the marked points as in $\MKO$. It is well-known after \cite{MR1601666} and \cite[Section 1.3.9]{math/9906063} that this family forms a (symmetric) unital operad in $1$-stacks by declaring $\overline{\mathcal{M}}_{0,2}$ to be a point thought of as a copy of the projective space with two marked points and only the trivial automorphism and, by modifying the composition law by performing stabilizations after gluing the curves. As in section \ref{section-costelloisoperad} and repeating the shifting of notations in the formula \ref{formula-shiftnotations}, this provides an operad in the $\infty$-topos of derived stacks, which we will denote as $\Mstablemonoidal$. We will leave it to the reader to verify that the canonical maps $\MnK\to \Mnstable\times \{\beta\}$ given by stabilization, assemble to a map of graded operads in $1$-stacks and as these are smooth, to a map of graded $\infty$-operads in derived stacks. More precisely, this is a map of $B$-graded $\infty$-operads in $\T$

$$
\MKO \to \Mstablemonoidal\times_{\Nerve(\Fin)}\Nerve(\Fin^B)
$$

\noindent which by the adjunction (\ref{equation-forgetgradingadjunction}), we can also write as a map of $\infty$-operads 

\begin{equation}
\label{mapstabilizationoperads}
\mathrm{Stb}:\MKO\to \Mstablemonoidal
\end{equation}

\noindent given by the maps

$$
\coprod_\beta \mathfrak{M}_{0.n,\beta}\to \overline{\mathcal{M}}_{0,n}
$$

Our goal in this section is to explore the interaction of the stable action (\ref{equation-stableaction}) with this stabilization morphism. Our main result is the theorem \ref{laxactiongwnaive} written in a somewhat less natural language. To present the results as written in the theorem \ref{laxactiongwnaive} one would need many aspects of the theory of $(\infty,2)$-categories that are not yet available in the literature. We found a way to avoid those aspects that allows us to still give a precise statement while remaining in the setting of $(\infty,1)$-categories and without changing the content, at the cost of a less evident formulation.\\ 

To explain the result, the first observation is that the sheaf of $\infty$-operads in $\T$ given by $\T/(-)^{\mathrm{corr}, \otimes_\times}$ is in fact the $1$-categorical truncation of a sheaf of symmetric monoidal $(\infty,2)$-categories

\begin{equation}
\mathrm{Spans}(\T/(-))^{\otimes_\times}:\, Z\in \T^{op}\mapsto \mathrm{Spans}(\T/Z)^{\otimes_\times}
\end{equation}

This follows from the same arguments as for correspondences, as the construction of spans commutes with products (see \cite{1409.0837}).\\

We claim that any such sheaf can be presented as a sheaf of \emph{categorical operads} in $\T$. Recall from \cite{1307.0405} that a categorical operad in spaces is an $\infty$-functor $\Omega^{op}\to \iCat$ satisfying the Segal conditions. Of course, the inclusion $\Spaces\subseteq \iCat$ produces a fully faithful functor $\mathrm{Op}_{\infty}\to \mathrm{Op}(\iCat):= \Fun^{\mathrm{Segal}}(\Omega^{op}, \iCat)$. Informally speaking, these correspond to multicategories where the collections of $n$-ary operations form $(\infty,1)$-categories. A natural source of categorical operads in spaces are symmetric monoidal $(\infty,2)$-categories: let $\Cmonoidal$ be a symmetric monoidal $(\infty,2)$-category. One can define a categorical operad as follows: to a corolla $T_n$ one assigns the disjoint union of 

$$
\coprod_{(X_1,..., X_n, Y)} \Map_{\C}(X_1\otimes ... \otimes X_n, Y)
$$

\noindent where the $(X_1,..., X_n, Y)$ runs over all the lists of $n+1$ objects in $\C$ and $\Map_{\C}$ is the $(\infty,1)$-category of maps in $\C$. For a general T one imposes the Segal conditions. This construction can be made functorial using the tensor products in $\C$. For the moment being we will avoid to give a precise construction of this assignment and we will just assume it has been constructed. We hope to give a precise construction in a later version of this project.\\

In this case we can exhibit the data of $\mathrm{Spans}(\T/(-))^{\otimes_\times}$ as a limit preserving $\infty$-functor $\T^{op}\to \mathrm{Op}(\iCat)$ which we will again denote as $\mathrm{Spans}(\T/(-))^{\otimes_\times}$. At the same time, both $\MKO$ and $\Mstablemonoidal$ can be presented as categorical operads via the inclusion

$$
\T^{op}\to \mathrm{Op}_{\infty}\subseteq \mathrm{Op}(\iCat)
$$

\noindent and the map of $\infty$-operads in $\T$ encoding the brane action $\MKO\to \T/(-)^{\mathrm{corr}, \otimes_\times}$ is equivalent via the universal property of the $1$-categorical truncation to a map of categorical operads $\MKO\to \mathrm{Spans}(\T/(-))^{\otimes_\times}$ that factors through the maximal $(\infty,1)$-category. As a result we find a correspondence of categorical operads in $\T$

\begin{equation}
\label{correspondencelaxorigin}
\xymatrix{
\Mstablemonoidal& \MKO\ar[l]\ar[r]& \mathrm{Spans}(\T/(-))^{\otimes_\times}
}
\end{equation}

Using the Grothendieck construction, each categorical operad in $\T$ can also be presented as a bifibration over $\Omega^\mathrm{op}\times \T$ and the maps of operads in (\ref{correspondencelaxorigin}) produce maps that preserve $\T$-cartesian edges and $\Omega^\mathrm{op}$-cocartesian edges (ie, the Segal conditions).

\begin{equation}
\label{correspondencelaxorigin}
\xymatrix{
\int \Mstablemonoidal\ar[dr]^r& \ar[d]^q\int \MKO\ar[l]_{\mathrm{Stb}}\ar[r]&\ar[dl]^p \int \mathrm{Spans}(\T/(-))^{\otimes_\times}\\
&\Omega^\mathrm{op}\times \C&
}
\end{equation}

We can now present our formulation of the Theorem \ref{laxactiongwnaive}:

\begin{thm}
\label{laxactiongw}
There exists an $\infty$-functor

\begin{equation}
\label{correspondencelaxorigin}
\xymatrix{
\int \Mstablemonoidal \ar[dr]^r\ar[rr]&&\ar[dl]^p \int \mathrm{Spans}(\T/(-))^{\otimes_\times}\\
&\Omega^\mathrm{op}\times \T&
}
\end{equation}

\noindent given informally as follows: for each $Z\in \C$ and for each corolla $T_n$, it sends a curve $\sigma: Z\to \Mnstable$ to the correspondence

\begin{equation}
\xymatrix{
\prod_n X\times Z \ar[dr]&\ar[d]\ar[l]\ar[r](\coprod_\beta \R\overline{\mathcal{M}}_{0,n}(X,\beta))\times_{\overline{\mathcal{M}}_{0,n}}Z&  \ar[dl]X\times Z\\
&Z&
}
\end{equation}

Moreover, this map does preserve cartesian edges with respect to the projection to $\T$ but does not preserve cocartesian edges with respect to $\Omega^\mathrm{op}$.

\begin{proof}
We construct the required $\infty$-functor as a relative left Kan extension

\begin{equation}
\label{correspondencelaxorigin45}
\xymatrix{
\int \MKO\ar[d]\ar[r]\ar[d]^{\mathrm{Stb}}&\ar[d]^p \int \mathrm{Spans}(\T/(-))^{\otimes_\times}\\
\int \Mstablemonoidal\ar[r]^r\ar@{-->}[ur]& \T\times \Omega^\mathrm{op}
}
\end{equation}

%

In order to proceed we remark that $p$ is in fact a locally coCartesian fibration. So far $p$ was constructed as a bifibration: cocartesian with respect to $\Omega^\mathrm{op}$ and cartesian with respect to $\T$. Given a corolla $T_n\in \Omega^\mathrm{op}$ and $Z\in T$, the fiber of $p$ over $(T_n,Z)$ is given by a comma category in the topos theory, where the relavant part is the category $\Map_{Spans(T/Z)}(\prod_n X\times Z, X\times Z)$. We now remark that in fact, the cartesian fibration defined by taking the fiber over a corolla $T_n$, 

$$(\int \mathrm{Spans}(\T/(-))^{\otimes_\times})\times_{\Omega^\mathrm{op}}\{T_n\} \to \T$$
is in fact a biCartesian fibration. Indeed, as the the cartesian structure is given by base change, the biCartesian structure is given by the left adjoint of the base-change, meaning, the composition functors. 
We are in the following situation:

\begin{enumerate}
\item The composition $\mathrm{Spans}(\T/(-))^{\otimes_\times}\to \T\times \Omega^\mathrm{op}$ is a Cartesian fibration; 
\item For each $Z\in \T$, the projection $\mathrm{Spans}(\T/(-))^{\otimes_\times}\times_\T\{Z\}\to \Omega^\mathrm{op}$ is cocartesian;
\item The fiber over a corolla $T_n$, $(\int \mathrm{Spans}(\T/(-))^{\otimes_\times})\times_{\Omega^\mathrm{op}}\{T_n\}\to \T$ is a cocartesian fibration via the forgetful functor.
\end{enumerate}

It follows then from the same arguments as in \cite[4.5.3.4]{lurie-ha} (using the Segal conditions instead of the inert cocartesian liftings) that $p$ is a locally coCartesian fibration. It follows then by \cite[4.3.1.10]{lurie-htt} that $p$-colimits are colimits in the fibers that remain colimits under change of fiber via $p$-locally cocartesian morphisms. As the forgetful functors between comma categories in a topos commutes with colimits and the cocartesian liftings of maps in $\Omega^{\mathrm{op}}$ are given by taking pullbacks in a topos, thus also commuting with colimits, it follows that $p$ admits all $p$-colimits. This is enough to deduce the existence of a lifting map as in the diagram (\ref{correspondencelaxorigin45}), using the existence theorem \cite[4.3.2.13]{lurie-htt} and the results of \cite[Section 4.3.3]{lurie-htt}.

One must now show that this relative left Kan extension gives back the formula in the statement of the theorem. For this we make a second preliminary observation: that in fact the functor 

$$\mathrm{Stb}: \int \MKO \to \int \Mstablemonoidal$$ 

\noindent admits a right adjoint relatively to the projection to $\Omega^\mathrm{op}$. Indeed, this follows from  a dual version of \cite[7.3.2.6]{lurie-ha} as for each corolla  $T_n\in \Omega^\mathrm{op}$ we have canonical identifications 
$$(\int \MKO)\times_{\Omega^\mathrm{op}}\{T_n\}\simeq \T/(\coprod_\beta\mathfrak{M}_{0,n,\beta})$$
and
$$(\int  \Mstablemonoidal)\times_{\Omega^\mathrm{op}}\{T_n\}\simeq \T/\overline{\mathcal{M}}_{0,n}$$
and as $\mathrm{Stb}$ is given by the composition with the stabilization map $(\coprod_\beta\mathfrak{M}_{0,n,\beta})\to \overline{\mathcal{M}}_{0,n}$, it has a right adjoint given by pulling back also along the stabilization map.  Combined with the arguments in the proof of \cite[4.3.3.9]{lurie-htt} (as both projections to $\Omega^{\mathrm{op}}$ are cocartesian fibrations), this implies that for for each $\sigma$ in $\int \Mstablemonoidal$ over $Z\in \T$, the comma category  $(\int \MKO)_{/\sigma}$ is equivalent to the comma topos $\T/(Z\times_{\overline{\mathcal{M}}_{0,n}}(\coprod_\beta \mathfrak{M}_{0,n, \beta)})$
and the formula for the $p$-relative Kan extension sends $\sigma$ to the colimit in the fiber over $Z$

$$
\mathrm{colim}_{Z'  \to (\coprod_\beta \mathfrak{M}_{0,n, \beta})\times_{\overline{\mathcal{M}}_{0,n}} Z}\,\, \, \mathrm{forget} ( \mathrm{Hom}_{/Z'}^{\mathrm{Stb}}(C_{\tilde{\sigma}}, X\times Z' )) $$

\noindent where $\mathrm{forget}$ is the functor that sees an object over $Z'$ as an object over $Z$ along the composition with $u$. This is the same as 
 
$$\simeq \mathrm{colim}_{Z'  \to \mathfrak{M}_{0,n}\times_{\overline{\mathcal{M}}_{0,n}} Z}\,\, \,  \mathrm{forget} ( \coprod_\beta \R\overline{\mathcal{M}}_{0,n}(X,\beta))\times_{\coprod_\beta \mathfrak{M}_{0,n}}Z' )$$

\noindent which, as the forgetful functor is a left adjoint, is the same as

$$\simeq \mathrm{forget}\, ( \mathrm{colim}_{Z'  \to \mathfrak{M}_{0,n}\times_{\overline{\mathcal{M}}_{0,n}} Z}\,\, \,   \coprod_\beta \R\overline{\mathcal{M}}_{0,n}(X,\beta))\times_{\mathfrak{M}_{0,n}}Z' )$$

$$\simeq \mathrm{forget}\, ( \mathrm{colim}_{Z'  \to \mathfrak{M}_{0,n}\times_{\overline{\mathcal{M}}_{0,n}} Z}\,\, \,   \coprod_\beta \R\overline{\mathcal{M}}_{0,n}(X,\beta))\times_{\mathfrak{M}_{0,n}}(Z\times_{\overline{\mathcal{M}}_{0,n}}\mathfrak{M}_{0,n})\times_{(Z\times_{\overline{\mathcal{M}}_{0,n}}\mathfrak{M}_{0,n})} Z'$$

$$\simeq \mathrm{forget}\, ( \coprod_\beta \R\overline{\mathcal{M}}_{0,n}(X,\beta))\times_{\mathfrak{M}_{0,n}}(Z\times_{\overline{\mathcal{M}}_{0,n}}\mathfrak{M}_{0,n})\times_{(Z\times_{\overline{\mathcal{M}}_{0,n}}\mathfrak{M}_{0,n})}(\mathrm{colim}_{Z'  \to \mathfrak{M}_{0,n}\times_{\overline{\mathcal{M}}_{0,n}} Z}\,\, \, Z'))$$

$$\simeq \mathrm{forget}\, ( \coprod_\beta \R\overline{\mathcal{M}}_{0,n}(X,\beta))\times_{\overline{\mathcal{M}}_{0,n}} Z)\times_{(Z\times_{\overline{\mathcal{M}}_{0,n}}\mathfrak{M}_{0,n})}(Z\times_{\overline{\mathcal{M}}_{0,n}}\mathfrak{M}_{0,n}))$$

$$\simeq \mathrm{forget}\, ( \coprod_\beta \R\overline{\mathcal{M}}_{0,n}(X,\beta))\times_{\overline{\mathcal{M}}_{0,n}} Z)$$
 
It is immediate to see from this description  that this map preserves cartesian edges relatively to $\T$. Moreover, it is also clear that it is defined over $\Omega^\mathrm{op}$ as both $\mathrm{Stb}$ and the brane action are, and the $p_\T$-cocartesian structure is defined fiberwise relatively to $\Omega^\mathrm{op}$. We will now explain why it does not preserve cocartesian edges relatively to $\Omega^\mathrm{op}$. To simplify the notations let us write

\begin{equation}
\label{simpleformstablemaps}
\R\overline{\mathcal{M}}^\sigma_{0,n}(X) :=(\coprod_\beta \R\overline{\mathcal{M}}_{0,n}(X,\beta))\times_{\overline{\mathcal{M}}_{0,n}}Z
\end{equation}

\noindent for $\sigma:Z\to  \overline{\mathcal{M}}_{0,n}$. Let $T$ be a tree in $\Omega$ consisting of a gluing of a corolla $T_{n-1}$ to a corolla $T_{m-1}$ where the root of $T_{n-1}$ is attached to the first leaf of $T_{m-1}$. Then, because of the Segal condition we can think of an object in $\int \Mstablemonoidal$ over $(T,Z)$ as a pair $(\sigma,\tau)$ of composable curves over $Z$ where $\sigma$ has $n$ marked points and $\tau$ has $m$ marked points. Then by the previous discussion, the relative Kan extension sends the object  $(\sigma, \tau)$ over $(T,Z)$ to the pair of arrows $(\R\overline{\mathcal{M}}^\sigma_{0,n}(X)\to X^n, \R\overline{\mathcal{M}}^\tau_{0,m}(X)\to X^m)$ - here we use again the Segal condition for $\mathrm{Spans}(\T/(-))^{\otimes_\times}$ to identify objects over $(T,Z)$ as pairs. By definition, a $r$-cocartesian lifting for the contraction map $ (T_{n+m-2}, Z)\to (T, Z)$ in $\Omega$ gives the gluing of the two curves $(\sigma, \tau)\to \tau\circ \sigma$. Its target is sent to the map $\R\overline{\mathcal{M}}^{\tau\circ \sigma}_{0,n+m-2}(X)\to X^{n+m-2}$ while by definition of the cocartesian fibration $p$ (relatively to $\Omega^{\mathrm{op}}$), a  $p$-cocartesian lifting of the same contraction map has target the map $\R\overline{\mathcal{M}}^\sigma_{0,n}(X)\times_X \R\overline{\mathcal{M}}^\tau_{0,m}(X)\to X^{n+m-2}$. The universal property of $p_\Omega^\mathrm{op}$-cocartesian morphisms then gives us a canonical map $\R\overline{\mathcal{M}}^\sigma_{0,n}(X)\times_X \R\overline{\mathcal{M}}^\tau_{0,m}(X)\to \R\overline{\mathcal{M}}^{\tau\circ \sigma}_{0,n+m-2}(X)$ which corresponds to the gluing of the two stable maps. This is not an equivalence in general.

\end{proof}

\end{thm}

Following the terminology of  \cite{1307.0405} we will say that this map obtained in the theorem is a \emph{very lax} map of categorical operads from $\Mstablemonoidal$ to $\mathrm{Spans}(\T/(-))^{\otimes_\times}$, and we will denote it as  $\Mstablemonoidal\leadsto \mathrm{Spans}(\T/(-))^{\otimes_\times}$. Unwinding the definitions this encodes the coherences of an action, given by universal correspondences

\begin{equation}
\xymatrix{
&\ar[dl]\ar[dr]\coprod_\beta \stablemaps &\\
\Mnstable && X^n
}
\end{equation}

\noindent and satisfying a lax associative law given by the gluing maps

\begin{equation}
\label{laxassociativestructurediagram}
\resizebox{1.1\hsize}{!}{
\xymatrix{
\Mnstable\times \overline{\mathcal{M}}_{0,m} &\ar[l](\coprod_\beta \R\overline{\mathcal{M}}_{0,n}(X,\beta)\times (\coprod_\beta \R\overline{\mathcal{M}}_{0,m}(X,\beta))\ar[r]& X^n\times X^m=X^{n-1}\times X \times X \times X^{m-1}\\
\Mnstable\times \overline{\mathcal{M}}_{0,m}\ar@{=}[u]&\ar @{} [ur] |{\text{\LARGE{$\urcorner_\nu$}}} \ar@{-->}[d]\ar[u]\ar[r](\coprod_\beta \R\overline{\mathcal{M}}_{0,n}(X,\beta)\times_X (\coprod_\beta \R\overline{\mathcal{M}}_{0,m}(X,\beta))& X^{n-1} \times X\times X^{m-1}\ar[u]_{id^{n-1}\times \Delta \times id^{m-1}}\ar@{=}[d]\\
\Mnstable\times \overline{\mathcal{M}}_{0,m}\ar@{=}[u]\ar[d]& \ar @{} [dl] |{\text{\LARGE{$\llcorner_\mu$}}} \ar[l]\ar[d](\coprod_\beta \R\overline{\mathcal{M}}_{0,n+m-2}(X,\beta))\times_{\overline{\mathcal{M}}_{0,n+m-2}}(\Mnstable\times \overline{\mathcal{M}}_{0,m})& X^{n-1} \times X\times X^{m-1}\ar[d]^{id^{n-1}\times *\times id^{m-1}}\\
\overline{\mathcal{M}}_{0,n+m-2}&\coprod_\beta \R\overline{\mathcal{M}}_{0,n+m-2}(X,\beta)\ar[l]\ar[r]& X^{n-1}\times * \times X^{m-1}
}
}
\end{equation}

\noindent which are non-invertible.

\begin{remark}
\label{disjointunionArtin}
Let us also remark that all the derived stacks involved in this action are derived geometric stacks. This follows from \cite[1.3.3.4, 1.3.3.5]{toen-vezzosi-hag2} which shows that the notion of being $n$-geometric is local, stable under pullbacks and in particular closed under small disjoint unions. Another important consequence of this is that we have the base-change formula for the two pullback squares in the diagram - see \cite[Cor. 1.4.5]{1108.5351} or \cite[B.15]{1402.3204}.
\end{remark}

To conclude this section we will also show that our lax action admits a graded version. This will be more useful to us in the next sections. Indeed, we can start with the correspondence of graded operads

\begin{equation}
\xymatrix{
\Mstablemonoidal\times_{\Nerve(\Fin)} \Nerve(\Fin^B)  &\ar[l]\ar[r] \MKO & \T/(-)^{\mathrm{corr}, \otimes_\times}\times_{\Nerve(\Fin)} \Nerve(\Fin^B) 
}
\end{equation}

\noindent and replacing the category of trees $\Omega$ by the category $\Omega_B$ of the Remark \ref{remark-gradeddendroidal} \todo{CAREFUL! HERE WE ARE USING THE EQUIVALENCE DENDROIDAL-LURIE}we can consider the corresponding notion of \emph{graded categorical operads in $\T$}. The definitions apply mutatis-mutandis.

Like in (\ref{correspondencelaxorigin}) we have a correspondence of such objects which using the Grothendieck construction we can exhibit as bifibrations

\begin{equation}
\label{correspondencelaxorigin2}
\xymatrix{
\int \Mstablemonoidal \times_{\Nerve(\Fin)} \Nerve(\Fin^B)  \ar[dr]^r& \ar[d]^q\int \MKO\ar[l]\ar[r]&\ar[dl]^{p_B} \int \mathrm{Spans}(\T/(-))^{\otimes_\times}\times_{\Nerve(\Fin)} \Nerve(\Fin^B) \\
&\Omega_B^\mathrm{op}\times \T&
}
\end{equation}

We have the following graded version of the lax action:

\begin{prop}
\label{laxactiongwgraded}
There exists an $\infty$-functor

\begin{equation}
\label{correspondencelaxorigin}
\xymatrix{
\int \Mstablemonoidal \times_{\Nerve(\Fin)} \Nerve(\Fin^B) \ar[dr]^r\ar[rr]&&\ar[dl]^p \int \mathrm{Spans}(\T/(-))^{\otimes_\times}\times_{\Nerve(\Fin)} \Nerve(\Fin^B) \\
&\Omega_B^\mathrm{op}\times \T&
}
\end{equation}


\noindent given informally as follows: for each $Z\in \C$ and for each corolla $T_n$, it sends a curve $\sigma: Z\to \Mnstable$ together with the choice of an element $\beta$, to the correspondence

\begin{equation}
\label{eq-thebloodmoonrisesonceagain}
\xymatrix{
\prod_n X\times Z \ar[dr]&\ar[d]\ar[l]\ar[r]  \R\overline{\mathcal{M}}_{0,n}(X,\beta)\times_{\overline{\mathcal{M}}_{0,n}}Z &  \ar[dl]X\times Z\\
&Z&
}
\end{equation}

Moreover, this map preserves cartesian edges relatively to $\T$ but does not preserve cocartesian edges relatively to the $\Omega$-direction.

\begin{proof}
The proof follows from the same arguments as in the theorem \ref{laxactiongw}. In this case the lax structure is given by the gluing maps

$$
\R \overline{\mathcal{M}}^\sigma_{0,n}(X, \beta)\times_X \R \overline{\mathcal{M}}^\tau_{0,m}(X, \beta')\to \R \overline{\mathcal{M}}^{\sigma\circ \tau}_{0,n+m-2}(X, \beta+\beta')\times_{\overline{\mathcal{M}}_{0,n+m-2}}(\overline{\mathcal{M}}_{0,n}\times \overline{\mathcal{M}}_{0,m})
$$

\noindent where $\R \overline{\mathcal{M}}^\sigma_{0,n}(X, \beta)$ is the open component of $\R \overline{\mathcal{M}}^\sigma_{0,n}(X)$ consisting of all stable maps with total degree $\beta$.

\end{proof}

\end{prop}

\section{Categorification of GW-invariants and Quantum $\Ktheory$-theory}

\subsection{Categorification}
\label{section-categorification}

We know from the theorem \ref{mainthm} that any smooth projective algebraic variety $X$, seen as an object in correspondences, carries a natural action of the graded operad $\MKO$. We now explain how to extend this action to the derived category of $X$ and that this action restricts to both perfect and coherent complexes.

Let  $\mathrm{Sp}^{\otimes}$ denote the symmetric monoidal $(\infty,1)$-category of spectra and let $\derivedkmonoidal:=\Mod_k(\Sp)^{\otimes}$ denote the $\infty$-categorical version of the derived category of $k$ with its standard symmetric monoidal structure. We set

$$\dgcontk:= \Mod_{\derivedk}(\Prlstable)$$

\noindent where $\Prlstable$ is the $(\infty,1)$-category of stable presentable $(\infty,1)$-categories. Moreover, we observe that the site $\C:=\dst^{\mathrm{aff}}$ is equivalent to $\CAlg(\derivedk^{\leq 0})$ for the natural $t$-structure in $\Sp$. In this case, and following \cite[Section 3.1]{Gaitsgory-Nick-book}, taking modules defines a lax monoidal $\infty$-functor

$$
\Mod: \C^{op}\to \CAlg(\dgcontk)
$$

\noindent informally described by the formula $A\mapsto \Mod_A(\derivedk)^{\otimes}\simeq \Mod_A(\Sp)^{\otimes}$. Thanks to \cite[Thm 1.3.7.2]{toen-vezzosi-hag2} this $\infty$-functor has fpqc descent and therefore, by Kan extension, provides a limit preserving functor $\Qcoh:\T^{op}\to \CAlg(\dgcontk)$.

One can also consider the compositon

$$
\xymatrix{\C^{op}\ar[r]^-{\Qcoh}& \CAlg(\dgcontk)\ar[rrr]^-{\Mod_{(-)}(\dgcontk)}&&& \CAlg(\mathrm{Cat}_{\infty}^{big})}
$$

\noindent which, thanks to \cite[Appendix A]{gaitsgory-ShCats} or \cite[Remark 2.5]{toen-azumaya}, satisfies fppf descent. Here $\mathrm{Cat}_{\infty}^{big})$ denotes the (very large) $(\infty,1)$-category of not necessarily small $(\infty,1)$-categories.  Again, it can be Kan extended to a limit preserving functor $\dgcont:\T^{op}\to \CAlg(\mathrm{Cat}_{\infty}^{big})$.\\

We start this section with the construction of a (lax monoidal) map of $(\infty,1)$-categories over $\T$

\begin{equation}
\label{natcat}
(\T/(-))^{op}\to \dgcont
\end{equation}

\noindent informally given by the following formula: for $Z\in \C$, the map

$$
(\T/(Z))^{op, \times}\to \dgcont(Z)
$$

\noindent sends $f:Y\to Z$ to the $(\infty,1)$-category $\Qcoh(Y)$ seen as a $\Qcoh(Z)$-module via the action by $f^*:= \Qcoh(f)$.\\

Consider first the functor $\Fun(\Delta[1], \T^{op})\to \Fun(\Delta[1], \CAlg(\dgcontk))$ obtained by composition with $\Qcoh$. Now, recall now from \cite[Section 3.3.3]{lurie-ha} the construction of a generalized $\infty$-operad $\Mod(\dgcontk)^{\otimes}\to \CAlg(\dgcontk)\times \Nerve(\Fin)$ whose fiber over $(\mathrm{V}^{\otimes}, \onefin)\in \CAlg(\dgcontk)\times \Nerve(\Fin)$ is $\Mod_V(\dgcontk)$. In general, an object in $\Mod(\dgcontk)^{\otimes}$ over $\onefin$ can be thought of as a pair $(\mathrm{V}^{\otimes}, \M)$ where $\mathrm{V}^{\otimes}$ is a presentable stable $k$-linear symmetric monoidal $(\infty,1)$-category and $\M$ is another presentable stable $k$-linear $(\infty,1)$-category equipped with a structure of $\mathrm{V}^{\otimes}$-module. There is now a natural $\infty$-functor over $\CAlg(\dgcontk)$

$$
\xymatrix{
 \Fun(\Delta[1], \CAlg(\dgcontk))\ar[dr]^{ev_0}\ar[rr]&& \Mod(\dgcontk)^{\otimes}_{\onefin}\ar[dl]\\
&\CAlg(\dgcontk)&
}
$$

\noindent which to a symmetric monoidal functor $F:\mathrm{V}^{\otimes}\to \mathrm{W}^{\otimes}$ assigns the pair $(\mathrm{V}^{\otimes}, \mathrm{W})$ where $\mathrm{W}$ is now seen as a $\mathrm{V}^{\otimes}$-module via $F$. See for instance the discussion in \cite[pag. 249]{robalo-thesis}. Finally, by composition with $\Qcoh$ we obtain a commutative diagram

\begin{equation}
\label{natcat2}
\xymatrix{
 \Fun(\Delta[1], \T^{op})\ar[d]^-{ev_0}\ar[r]& \Mod(\dgcontk)^{\otimes}_{\onefin}\ar[d]\\
\T^{op}\ar[r]^-{\Qcoh}&\CAlg(\dgcontk)
}
\end{equation}

To conclude the construction of (\ref{natcat}) we observe that 

\begin{itemize}
\item we have

$$
\intcocart (\T/(-))^{op} \simeq (\intcart \T/(-))^{op}
$$

 \noindent and the last is given by  $\Fun(\Delta[1], \T^{op})$ together with the evaluation at zero to $\T^{op}$;

\item $\intcocart \dgcont$ can be canonically identified with the fiber product 

$$\T^{op}\times_{\CAlg(\dgcontk)}\Mod(\dgcontk)^{\otimes}_{\onefin}$$

.
\end{itemize}

Therefore, the commutativity of (\ref{natcat2}) and the universal property of pullbacks give the map (\ref{natcat}).\\

For technical reasons we will need to impose some conditions in the derived stacks we work with. As in \cite{1307.0405}, we consider a full subcategory $\V\subseteq \T$, namely, we will consider $\V$ the full subcategory spanned by all perfect stacks in the sense of \cite[Section 3]{MR2669705}. In particular, thanks to \cite[Cor 3.22]{MR2669705}, stacks of the form $Y/G$ with $Y$ a quasi-projective derived scheme of finite presentation and $G$ is a smooth linear algebraic group action on $Y$ over $k$, are perfect. Also, thanks to the combination of \cite[Cor 5.2]{toen-azumaya} and \cite[Theorem 1]{MR1492534} (see also \cite[p.4]{1405.1887}) quasi-compact separated Deligne-Mumford stacks with coarse moduli space, such as the stack of stable maps, are perfect. We will now summarize the nice features of $\V$:

\begin{enumerate}
\item Any derived affine and any smooth and quasi-projective scheme of finite presentation belongs to $\V$. In particular, $X$ belongs to $\V$;\\
\item The inclusion $\V\subseteq \T$ is closed under products and $\V$ is a generating site for $\T$. Moreover, the $\infty$-functor

$$
\V/(-)^{\times}: Z\in \C^{op}\mapsto (\V/Z)^{\times} 
$$

\noindent is a stack of symmetric monoidal $(\infty,1)$-categories and the inclusion $\V\hookrightarrow \T$ induces a natural transformation

$$
\V/(-)\to \T/(-)
$$

\noindent which induces a new natural transformation of strong monoidal functors

$$
\V/(-)^{\mathrm{corr}, \otimes_\times}\to \T/(-) ^{\mathrm{corr}, \otimes_\times}
$$

\noindent This one being faithful but not full.\\

\item The morphism encoding the stable action on $X$

$$
\MKO\to \T/(-)^{\mathrm{corr}, \otimes_\times}
$$

\noindent factors through the (non-full) inclusion 

$$\V/(-)^{\mathrm{corr}, \otimes_\times}\hookrightarrow \T/(-)^{\mathrm{corr}, \otimes_\times}$$

\noindent This follows from the formula (\ref{eq-thebloodmoonrisesonceagain}) defining the action and the fact that the stacks appearing in the middle are Deligne-Mumford with course moduli a scheme, therefore perfect.

\medskip

\item The restriction $\Qcoh:\V^{op}\hookrightarrow \T^{op}\to \CAlg(\dgcontk)$ has values in the full subcategory $\CAlg(\dgcontkcompact)$ spanned by the dg-categories having compact generators. In this case $\Qcoh$ is a strongly monoidal functor. Thanks to \cite[Thm. 0.2 and Remark 2.9]{toen-azumaya}, the full sub-prestack of dg-categories having compact generators $\dgcontcompact\subseteq \dgcont$ is also a stack for the fppf topology. Moreover, this inclusion is monoidal and in this case the composition $\V/(-)^{\times}\to \T/(-)^{\times}\to \dgcontcompact$ defines a monoidal map of stacks in symmetric monoidal categories. \\

\item Notice that $\dgcontkcompact$ is the 1-categorical truncation of $(\infty,2)$-category- following the results \cite[Thm 1.4 and Cor 1.8]{Toen-homotopytheorydgcatsandderivedmoritaequivalences} the hom-categories are given by bi-modules. In the same way, $\dgcontcompact$ is a sheaf with values in symmetric monoidal $(\infty,2)$-categories. The composition $\V/(-)\to \dgcontcompact$ satisfies the base-change conditions of section \ref{correspondencesandtwistedarrows} - base change for derived Artin stacks -  see \cite[Cor. 1.4.5]{1108.5351}, \cite[B.15]{1402.3204} or \cite[Prop. 3.10 and 3.23 ]{MR2669705}. By the $(\infty,2)$-universal monoidal property of correspondences  \cite[Part V]{Gaitsgory-Nick-book} applied object-wise, it extends in a essentially unique way to a strongly monoidal $(\infty,2)$-functor\\

$$\mathrm{Spans}(\V/(-))^{\otimes_\times}\to \dgcontcompact$$\\

\end{enumerate}

Finally, combining  (3) and (5) we find a map of  $\infty$-operads in $\T$

\begin{equation}
\label{costellocategorified}
\MKO\to \V/(-)^{\mathrm{corr}, \otimes_\times}\subseteq \mathrm{Spans}(\V/(-))^{\otimes_\times}\to \dgcontcompact
\end{equation}

\noindent exhibiting an algebra structure on $\Qcoh(X)$. By construction, over each $Z$ affine, the map of graded $\infty$-operads

$$
\MKO(Z) \to (\V/Z)^{\mathrm{corr}, \otimes_\times}\to \dgcontcompact(Z) 
$$

\noindent is defined by sending a stable curve $\sigma:Z\to\MnK$ to the correspondence over $Z$

\begin{equation}
\label{formulaoverZfinal}
\xymatrix{
\prod_{n-1} X\times Z \ar[dr]& \ar[l] \ar[r]\ar[d] Z\times_{\MnK}\stablemaps & X\times Z\ar[dl]\\ 
&Z&
}
\end{equation}

\noindent and then,  unwinding the universal property of correspondences, to the functor in $\dgcont(Z)= \Mod_{\Qcoh(Z)}(\dgcontkcompact)$  given by pullback-pushforward along (\ref{formulaoverZfinal})

\begin{equation}
\label{formulaoverZfinal2}
\resizebox{1.0\hsize}{!}{
\xymatrix{
&\Qcoh( Z\times_{\MnK}\stablemaps)\simeq \Qcoh(Z\times_{\MnK}\stablemaps) \ar[dr]^-{\text{pushforward}}&\\
\Qcoh(\prod_{n-1} X\times Z) \ar[ur]^-{pullback}&& \Qcoh(X\times Z)
}
}
\end{equation}

By the items (1)-(5) above, this is equivalent to a map of compactly generated dg-categories over $Z$

\begin{equation}
\label{formulaoverZfinal3}
\xymatrix{
\Qcoh( X\times Z)^{\otimes^{n-1}_Z} \ar[rr] && \Qcoh(X\times Z)
}
\end{equation}

\noindent and as the base change $\Qcoh(Z)\otimes - : \dgcontkcompact\to \dgcontcompact(Z)$ is monoidal and admits a right adjoint given by the forgetful functor, this is equivalent to the data of a map in $\dgcontk$

\begin{equation}
\label{formulaoverZfinal4}
\xymatrix{
\Qcoh( X)^{\otimes^{n-1}} \ar[rr] && \Qcoh(X)\otimes \Qcoh(Z)
}
\end{equation}

\begin{cor}
\label{corcategorification} The map  (\ref{costellocategorified}) makes $\Qcoh(X)$ an algebra over the graded categorical operad $\{\Qcoh(\MnK)\}_{n,\beta}$. The algebra structure is completely determined by the pullback-pushfoward maps in $\dgcontk$

\begin{equation}
\label{formulaoverZfinal5}
\xymatrix{
\Qcoh( X)^{\otimes^{n-1}} \ar[rr] && \Qcoh(X)\otimes \Qcoh(\MnK)\simeq \Qcoh(X\times\MnK) 
}
\end{equation}

\noindent for $n\geq 2$ and $\beta\in \NE(X)$.

\end{cor}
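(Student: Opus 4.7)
The statement is essentially a repackaging of the map of $\infty$-operads in $\T$
\[
\MKO \to \V/(-)^{\mathrm{corr}, \otimes_\times} \to \dgcontcompact
\]
constructed in (\ref{costellocategorified}) into a more user-friendly form indexed by the stacks $\MnK$ themselves. My plan is to read off the data of this map pointwise on the site and then apply the Yoneda lemma.

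First I would use that a map of sheaves of $\infty$-operads on $\C$ is the same as a natural transformation, so (\ref{costellocategorified}) gives, for each $Z\in \C$, a map of $B$-graded $\infty$-operads in spaces $\MKO(Z) \to \dgcontcompact(Z)$. The formulas (\ref{formulaoverZfinal})--(\ref{formulaoverZfinal4}) already preceding the corollary spell out that this map sends the unique color $c_Z$ to the object $\Qcoh(X\times Z) \in \Mod_{\Qcoh(Z)}(\dgcontkcompact)$, and sends an active $n$-ary operation $\sigma:Z\to \MnK$ of degree $\beta$ to the pullback-pushforward map along the correspondence (\ref{formulaoverZfinal}); then, via the base-change adjunction between $\dgcontk$ and $\Mod_{\Qcoh(Z)}(\dgcontkcompact)$, this transports to a map
\[
\Qcoh(X)^{\otimes(n-1)} \to \Qcoh(X)\otimes \Qcoh(Z)
\]
in $\dgcontk$.

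Second, to obtain the universal form (\ref{formulaoverZfinal5}), I would specialize to $Z=\MnK$ and take $\sigma=\mathrm{id}$. By Yoneda, $\Map_{\T}(\MnK, \MnK)$ classifies the identity of the space of $n$-ary operations of degree $\beta$ in the operad $\MKO$, so the image of this operation is exactly the asserted
\[
\Qcoh(X)^{\otimes(n-1)} \to \Qcoh(X)\otimes \Qcoh(\MnK) \simeq \Qcoh(X\times \MnK).
\]
Naturality in $Z$ together with the Yoneda lemma then shows that the operation associated to an arbitrary $\sigma:Z\to \MnK$ is obtained by base-change along $\sigma^\ast$ from the universal one at $Z=\MnK$, so the full map of $\infty$-operads is completely recovered from the characterizing maps (\ref{formulaoverZfinal5}).

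Finally, the graded categorical operad structure on $\{\Qcoh(\MnK)\}_{n,\beta}$ itself is obtained by transporting the operadic gluing maps of $\MKO$ through the monoidal $\infty$-functor $\Qcoh : \V^{op} \to \CAlg(\dgcontkcompact)$ (item (4) of the discussion preceding (\ref{costellocategorified})), and the operadic coherences of the action of $\{\Qcoh(\MnK)\}_{n,\beta}$ on $\Qcoh(X)$ are inherited directly from those of (\ref{costellocategorified}), with base-change along compositions of correspondences ensured by Remark \ref{disjointunionArtin}. The only delicate point I anticipate is the bookkeeping of the $(\infty,2)$-categorical compatibilities between pullback and pushforward functors along glued correspondences; this is however already incorporated in the $(\infty,2)$-monoidal universal property of spans used in item (5) of the same discussion, and therefore no verification beyond what is already contained in the construction of (\ref{costellocategorified}) is required.
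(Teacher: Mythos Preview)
Your proposal is correct and matches the paper's approach: the corollary has no separate proof in the paper and is meant as a direct summary of the construction in (\ref{formulaoverZfinal})--(\ref{formulaoverZfinal4}) together with items (1)--(5), exactly as you reconstruct it. One very minor bookkeeping remark: when you specialize to $Z=\MnK$ you are no longer in the affine site $\C$, so you are implicitly using that the sheaf of $\infty$-operads has been Kan-extended to all of $\T$; also, your appeal to Remark~\ref{disjointunionArtin} is a forward reference and is in any case subsumed by the base-change statement already invoked in item~(5).
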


We can now repeat the strategy used to construct the lax action of section \ref{section-gwaction}. As $\dgcontkcompact$ has a natural structure of symmetric monoidal $(\infty,2)$-category, we can also encode the data of the limit preserving functor $\dgcontcompact: \T^{op}\to \CAlg(\Prl)$ as the 1-categorical truncation of a categorical operad in $\T$,  $\dgcontcompact: \T^{op}\to \mathrm{Op}(\iCat)$. Repeating the same arguments as in section \ref{section-gwaction} we obtain a diagram

\begin{equation}
\label{laxcategorificationdiagram}
\xymatrix{
\int \Mstablemonoidal\times_{\Nerve(\Fin)} \Nerve(\Fin^B) \ar[dr]^r& \ar[d]^q\int \MKO\ar[l]_-{\mathrm{Stb}}\ar[r]&\ar[dl]^l \int \dgcontcompact \times_{\Nerve(\Fin)} \Nerve(\Fin^B)\\
&\Omega_B^\mathrm{op}\times \T&
}
\end{equation}

\noindent in the same conditions of the diagram (\ref{correspondencelaxorigin}). The next proposition establishes the categorification of the lax Gromov-Witten action:

\begin{prop}
\label{laxactiongwcategorification}
There exists an $\infty$-functor

\begin{equation}
\label{correspondencelaxorigin3}
\xymatrix{
\int \Mstablemonoidal \times_{\Nerve(\Fin)} \Nerve(\Fin^B) \ar[dr]^r\ar[rr]&&\ar[dl]^p \int \dgcontcompact\times_{\Nerve(\Fin)} \Nerve(\Fin^B) \\
&\Omega_B^\mathrm{op}\times \T&
}
\end{equation}


\noindent given informally by pullback-pushfoward along the universal correspondences (\ref{maincorrespondence})

 \begin{equation}
 \label{actioncategorieslaxmaps}
 \xymatrix{\Qcoh(X)^{\otimes_{n-1}}\ar[r] &\Qcoh(X \times \Mnstable)}
 \end{equation}
   
 Moreover, this map does send $\C$-cartesian edges to $\C$-cartesian edges but does not preserve cocartesian edges in the $\Omega$-direction.
\end{prop}

Notice that as the $\Mnstable$ is  smooth and proper, the dg-category $\Qcoh(\Mnstable)$ is dualizable object in $\dgcontcompact_k$ and also $\Qcoh(X\times \Mnstable)\simeq \Qcoh(X)\otimes \Qcoh(\Mnstable)$. Therefore the maps (\ref{actioncategorieslaxmaps}) are equivalent to the pullback-pushforward maps

  \begin{equation}
  \label{actioncategorieslaxmaps2}
\xymatrix{\Qcoh(\Mnstable)\otimes \Qcoh(X)^{\otimes_{n-1}}\ar[r] &\Qcoh(X)}
  \end{equation}

\begin{prop}
\label{prop-compatiblePerfCoh}
This action is compatible with the subcategories $\Coh$ and $\Perf$.
\begin{proof}
As the pullback of perfect along any map is still perfect, for $\Perf$ one only needs to justify why pushfoward along the maps in the diagrams preserve perfect. This is because $X$ and $\Mnstable$ are both proper smooth algebraic varieties and $\stablemaps$ is known to be a proper quasi-smooth derived Deligne-Mumford stack \cite[Section 2.2.4]{toen-vezzosi-hag2} so that the stabilization-evaluation maps $\stablemaps\to X^n\times \Mnstable$ are therefore proper and quasi-smooth.  In this case, as explained in \cite{properlocal} pushfowards preserve perfect complexes.

For $\Coh$ one has to justify both compatibilities with pushfowards and pullbacks: compatibility with pushfowards follows because the map is proper (see \cite[Lemma 3.3.5]{gaitsgory-IndCoh}) and for pullbacks we use the fact that pullbacks along quasi-smooth maps between proper DM-stacks are of finite Tor amplitude . \todo{\cite{} NEED TO SHOW THIS}
\end{proof}
\end{prop}

\subsection{The effects of the lax action on $\Ktheory$-theory}
\label{section-quantumKfromlax}

In this section we explore how the categorified lax action obtained at the end of the previous section induces an action on $\Ktheory$-theory.

We start by recalling that the $\Ktheory$-theory spectrum of an algebraic variety $X$ is defined to be the $\Ktheory$-theory spectrum of the dg-category of perfect complexes $\Ktheory(\Perf(X))$. Respectively, the $\mathrm{G}$-theory spectrum is defined to be the $\Ktheory$-spectrum of the dg-category $\Coh(X)$ which by definition is the full subcategory of $\Qcoh(X)$ spanned by those complexes of bounded cohomological amplitude and coherent cohomology. By a well-known theorem of Serre, if $X$ is smooth then the inclusion $\Perf(X)\subseteq \Coh(X)$ is an equivalence of $(\infty,1)$-categories and therefore the $\Ktheory$ and $\mathrm{G}$-theories agree. 

Let $F=\stablemaps$ and let $E_1,..., E_n\in \Ktheory_0(X)=\Gtheory_0(X)$ and $P\in \Ktheory_0(\Mnstable)=\Gtheory_0(\Mnstable)$. The $\Ktheory$-theoretic Gromov-Witten numbers that appear naturally from our lax action in the Prop. \ref{laxactiongwcategorification} are defined by

$$p_*(ev^*(E_1,.., E_n)\otimes \mathrm{Stb}^*(P)))=p_*(\mathcal{O}_{\stablemaps}\otimes ev^*(E_1,.., E_n)\otimes \mathrm{Stb}^*(P)))\in \Gtheory_0(\ast)=\mathbb{Z}$$

\noindent where $p$ is the projection to the point. Alternatively, these are encoded by maps

\begin{equation}
\mathrm{I}_{0,n, \beta}:\Ktheory_0(X)^{\otimes_n}\to \Ktheory_0(\Mnstable)
\end{equation}

\noindent induced from the lax action, i.e., via pullback-pushfoward along (\ref{maincorrespondence}).\\

Let us now explain how the lax associative structure produces the metric terms introduced by Givental-Lee to explain the $\Ktheory$-theoretic splitting principle. We ask the reader to recall the diagram )(\ref{laxassociativestructurediagram}). Let $\beta_0=\beta_1+\beta_2$ and let $n,m> 2$. We have two different ways to go from $\Ktheory_0(X^{n-1}\times X^{m-1})$ to $\Ktheory_0(\Mnstable\times \overline{\mathcal{M}}_{0,m})$: either we use the space of stable maps $\R\overline{\mathcal{M}}_{0,n+m-2}(X, \beta_0)$ and the pullback diagram $\mu$ or, we use the fiber product $\R\overline{\mathcal{M}}_{0,n}(X, \beta_1)\times_X \R\overline{\mathcal{M}}_{0,m}(X, \beta_2)$ and the pullback diagram $\nu$. The lax structure measures the difference between the two. Our goal for the rest of this section is to give a more accurate description of this difference.\\

We start with some general preliminary remarks. The first observation concerns the derived stack  $\mathrm{P}(X):=\coprod_\beta \mathbb{R}\overline{\mathcal{M}}_{0,2}(X,\beta)$. This stack has natural structure of monoid-object over $X$ given by the gluing to stable maps

\begin{equation}
 \mathrm{P}(X)\times_X  \mathrm{P}(X)\to  \mathrm{P}(X)
\end{equation}

This monoid structure can be obtained in a strict way as it exists already at the level of the moduli 1-stacks of Costello $\coprod_\beta \mathfrak{M}_{0,2,\beta}$. By definition, this stack classifies unparametrized rational paths on $X$ and the monoid operation corresponds to the concatenation. We then observe that for every $n\geq 2$ the stacks of stable maps $\coprod_\beta \stablemaps$ are modules over  $\mathrm{P}(X)$ (simultaneously on the left and on the right) via the gluing of stable maps along the last or first marked point. In this case, given $n,m\geq 2$ we can have a semi-simplical object in derived stacks given by the associated bar complex

\begin{equation}
\resizebox{1 \hsize}{!}{
\xymatrix{
 \ar@<1.1ex>[r] \ar@<-1.1ex>[r] \ar[r] &(\coprod_\beta \R\overline{\mathcal{M}}_{0,n}(X,\beta)\times_X \mathrm{P}(X)  \times_X  (\coprod_\beta \R\overline{\mathcal{M}}_{0,m}(X,\beta))  \ar@<1ex>[r] \ar@<-1ex>[r] & (\coprod_\beta \R\overline{\mathcal{M}}_{0,n}(X,\beta)\times_X (\coprod_\beta \R\overline{\mathcal{M}}_{0,m}(X,\beta))
}
}
\end{equation}

\noindent where the face maps are the gluing morphisms.\\

This semi-simplicial object is naturally augmented by the gluing map

\begin{equation}
\resizebox{1 \hsize}{!}{
\xymatrix{
 \ar@<1ex>[r] \ar@<-1ex>[r] & (\coprod_\beta \R\overline{\mathcal{M}}_{0,n}(X,\beta)\times_X (\coprod_\beta \R\overline{\mathcal{M}}_{0,m}(X,\beta))\ar[r]& (\coprod_\beta \R\overline{\mathcal{M}}_{0,n+m-2}(X,\beta))\times_{\overline{\mathcal{M}}_{0,n+m-2}}(\Mnstable\times \overline{\mathcal{M}}_{0,m})
}
}
\end{equation}

Let $\beta_0\in \NE(X)$. By base-change along the open inclusion 

 $$\R\overline{\mathcal{M}}_{0,n+m-2}(X, \beta_0)\subseteq \coprod_\beta \R\overline{\mathcal{M}}_{0,n+m-2}(X,\beta)$$
 
\noindent we obtain another simplicial object $\mathcal{U}(\beta_0)_\bullet$ informally described by

\begin{equation}
\resizebox{1 \hsize}{!}{
\xymatrix{
 \ar@<1.1ex>[r] \ar@<-1.1ex>[r] \ar[r] & \coprod_{\beta_{0}= d_1 + d_2 + d_3} \R\overline{\mathcal{M}}_{0,n}(X,d_1)\times_X  \R\overline{\mathcal{M}}_{0,2}(X,d_2) \times_X \R\overline{\mathcal{M}}_{0,m}(X,d_3)  \ar@<1ex>[r] \ar@<-1ex>[r] & \coprod_{\beta_{0}=d_1+ d_2} \R\overline{\mathcal{M}}_{0,n}(X, d_1)\times_X \R\overline{\mathcal{M}}_{0,m}(X,d_2)
}
}
\end{equation}

\noindent with an augmentation $\mathcal{U}(\beta_0)_\bullet\to \mathcal{U}(\beta_0)_{-1}$ given the gluing map

\begin{equation}
\label{augmentedgivenbeta}
\resizebox{1 \hsize}{!}{
\xymatrix{
 \ar@<1ex>[r] \ar@<-1ex>[r] & \coprod_{\beta_{0}=d_1+ d_2} \R\overline{\mathcal{M}}_{0,n}(X, d_1)\times_X \R\overline{\mathcal{M}}_{0,m}(X,d_2)\ar[r]& \R\overline{\mathcal{M}}_{0,n+m-2}(X,\beta_0)\times_{\overline{\mathcal{M}}_{0,n+m-2}}(\Mnstable\times \overline{\mathcal{M}}_{0,m})
}
}
\end{equation}

More generally, the level $[k]$ of $\mathcal{U}(\beta_0)_\bullet$ is given by the derived stack

\begin{equation}
\coprod_{\beta_{0}= d_0 + ... + d_{k+1}} Z_{d_0,...,d_{k+1}}
\end{equation}

\noindent where we define 

\begin{equation}
Z_{d_0,...,d_{k+1}}:=\R\overline{\mathcal{M}}_{0,n}(X,d_0)\times_X  \R\overline{\mathcal{M}}_{0,2}(X,d_1) \times_X ....\times_X \R\overline{\mathcal{M}}_{0,2}(X,d_{k}) \times_X \R\overline{\mathcal{M}}_{0,m}(X,d_{k+1})
\end{equation}

Again the face maps are the gluing maps. \\

For each $k\geq 0$ and for each partition of $\beta_0=d_0+...+ d_{k+1}$ we will let $f_{(d_0,..., d_{k+1})}$ denote the composition $ Z_{d_0,.., d_{k+1}}\subseteq\mathcal{U}(\beta_0)_k \to \mathcal{U}(\beta_0)_{-1}$. \\

\medskip

Now we list some important facts that we will need in our discussion:

\begin{enumerate}
\item The composition maps $\phi: \Mnstable\times \overline{\mathcal{M}}_{0.m}\to  \overline{\mathcal{M}}_{0.n+m-2}$ are closed immersions \cite[Cor 3.9]{Knudsen-moduli-stable-curves-II-1983}. By pullback, so is the map $\mathcal{U}(\beta_0)_{-1}\to  \R\overline{\mathcal{M}}_{0,n+m-2}(X,\beta_0)$.
\item The derived stacks of stable maps $\R\overline{\mathcal{M}}_{0,n}(X, \beta)$ are quasi-smooth. As $X$ is smooth, each stack  $Z_{d_0,...,d_{k+1}}$ will also be quasi-smooth (quasi-smooth maps are stable under pullback). Moreover, it is also known that the gluing maps  $\stablemaps \times_X \R\overline{\mathcal{M}}_{0,m}(X,\beta')\to \R\overline{\mathcal{M}}_{0,n+m-2}(X,\beta+\beta')$ are closed immersions. \personal{\label{commentquasi-smooth}this is not really well-known. In any case this is not really needed! it is enough to use h-descent.}.  

\item By the same pullback argument, the derived stack $\mathcal{U}(\beta_0)_{-1}$ is also quasi-smooth.\\

\end{enumerate}

Notice now that the augmented semi-simplicial object (\ref{augmentedgivenbeta}) lives in the full subcategory of derived Deligne-Mumford stacks and due to the finite properties of the Mori cone, for each $\beta_0$ it becomes constant after a certain level $k\geq 0$ (in fact empty, as the moduli of stable maps with two marked points and degree 0 is empty), in which case the diagram is in fact finite. 

We claim that in fact for each $\beta_0$, this augmented semi-simplicial diagram has a colimit inside derived Deligne-Mumford stacks. Not all colimits exist inside Deligne-Mumford stacks, however, and thanks to \cite[Thm 6.1]{lurie-dagIX}\footnote{Here one has to show that the theory of derived DM-stacks viewed via the functor of points approach  is equivalent to the theory of DM stacks in the sense of \cite{lurie-structuredspaces} using ringed $\infty$-topoi. This follows from \cite[Thm 2.4.1]{lurie-structuredspaces} together with the Representability Theorem \cite[2]{lurie-DAGXIV} together with the fact that a map of simplicial commutative rings $f:A\to B$ is \'etale if and only if the map $\mathrm{Spec}^{\text{\'et}}(B)\to \mathrm{Spec}^{\text{\'et}}(A)$ is \'etale . We thank Mauro Porta for explaining to us a detailed proof of this result, without using the Representability Theorem.} we know the existence of pushouts of derived DM-stacks along closed immersions. \\

We now remark that the stacks $Z_{(d_0,...,d_{k+1})}$ can be organized in a finite diagram where all maps are closed immersions and whose colimit is the same as the realization of the simplicial object $\mathcal{U}(\beta_0)_\bullet$. Let $\Delta_s$ denote the non-full subcategory of $\Delta$ with the same objects but only the injective maps as morphisms. We know from \cite[Lemma 6.5.3.7]{lurie-htt} that the inclusion $\Delta_s^{op}\subseteq \Delta^{op}$ is cofinal. In other words, to compute the realization of $\mathcal{U}(\beta_0)_\bullet$ we only need to care about face maps. To describe the diagram with the $Z_{(d_0,...,d_{k+1})}$ we introduce an auxiliary 1-category $\Lambda_{\beta_0}$. Its objects are pairs $([k], S_k)$ where $[k]$ is an object in $\Delta$ and $S_k$ is a choice of a decomposition $\beta_0=d_0+...+ d_{k+1}$. The morphisms $([k], S_k)\to ([k'], S_{k'})$ are given as in $\Delta_S$ by specifying the generating face maps. A face map  $([k], S_k=\{d_0,..., d_{k+1}\})\to ([k+1], S_{k+1}=\{\alpha_0,..., \alpha_{k+2}\})$ is by definition, the data of a face map $\partial_i:[k]\to [k+1]$ in $\Delta_s$ together with the condition that

  \[
 \left\{
          \begin{array}{ll}
                  d_j=\alpha_j &\text{ if } j< i \\
                  d_j=\alpha_j + \alpha_{j+1} & \text{ if } j= i\\
                  d_j= \alpha_{j+1} & \text{ if } j\geq i+1
                \end{array}
              \right.
  \]

The collection of the $Z_{(d_0,...,d_{k+1})}$ and closed immersions between, appears in the form of a  $\Lambda_{\beta_0}^{op}$-diagram $\Psi_{\beta_0}$ in the $(\infty,1)$-category of derived Deligne-Mumford stacks, together with a cone  $(\Lambda_{\beta_0}^{op})^{\triangleright}$ with vertex $\mathcal{U}(\beta_0)_{-1}$. One can construct this diagram by first constructing a similar diagram by hand in the category of usual 1-stacks using the moduli spaces of Costello. 

We  have a forgetful functor $t:\Lambda_{\beta_0}\to \Delta_s$. It follows from the definitions that the fibers of $t$ are discrete and it is an easy exercise to check that $t$ is a right fibration. Its opposite $\Lambda^{op}_{\beta_0}\to \Delta_s^{op}$ is a left fibration with discrete fibers and it follows that for every $[k]\in \Delta_s^{op}$, the canonical inclusion $t^{-1}([k])\subseteq (\Lambda^{op}_{\beta_0})_{/[k]}$ is cofinal by Quillen's Theorem A for $\infty$-categories \cite[Thm. 4.1.3.1]{lurie-htt}. In this case, we conclude that the left Kan extension of $\Psi_{\beta_0}$ along the forgetful map  $t:\Lambda_{\beta_0}^{op}\to \Delta^{op}$ is precisely the simplicial object $\mathcal{U}(\beta_0)_{\bullet}$. Moreover, as colimits are given as left Kan extensions along the projections to the constant diagram, the colimit of $\Psi_{\beta_0}$ is canonically equivalent to the colimit of $\mathcal{U}(\beta_0)_{\bullet}$. Moreover, one can freely add degeneracies to $\mathcal{U}(\beta_0)_{\bullet}$ by considering its Kan extension along the inclusion $\Delta_s^\mathrm{op}\subseteq \Delta^\mathrm{op}$. As this inclusion is cofinal, the colimit of this new diagram is the same. With the necessary care, we will use the same notation for this new simplicial object.





It is clear that the colimit of this diagram can be computed in a finite number of steps using only pushout diagrams. In this case, by the discussion above, there exists a colimit internal to the theory of derived Deligne-Mumford stacks $\mathrm{D}_{\beta_0}:=\mathrm{colim}^{\mathrm{DM-stk}}\mathcal{U}(\beta_0)_\bullet$.\\

\medskip

The key to understand the combinatorics of Gromov-Witten invariants is hidden in the canonical colimit map

\begin{equation}
\label{executeorder66}
\resizebox{1 \hsize}{!}{
\xymatrix{
f:\mathrm{D}_{\beta_0}:=\mathrm{colimit}^{\mathrm{DM-stk}}_{[k+1]\in \Delta}\, \coprod_{d_0+d_1+...+ d_{k+1}=\beta_0} \mathcal{Z}_{d_0,..., d_{k+1}} \ar[r]& \mathcal{U}(\beta_0)_{-1}:=\mathbb{R}\mathcal{M}_{0,n+m-2}(X,\beta_0)\times_{\overline{\mathcal{M}}_{0,n+m-2}}(\Mnstable\times \overline{\mathcal{M}}_{0,m})
}
}
\end{equation}

We believe that this map is an equivalence of derived Deligne-Mumford stacks. We will not prove this. Instead, we will restrict ourselves to show the properties required by Gromov-Witten theory. In fact, for cohomological invariants what matters is that this map is birational and for the K-theoretic invariants, the important point is that the $f_*(\mathcal{O}_{\mathrm{D}_{\beta_0}})\simeq \mathcal{O}_{\mathcal{U}(\beta_0)_{-1}}$. The first fact follows from a standard easy argument - the map $f$ induces an isomorphism when restricted to the open substacks of stable maps whose underlying curves are already stable.  This is an easy exercice which we will leave to the reader. The second statement concerning the structure sheaves is more involved: the proof we give below will use the results on derived h-cech-descent for almost coherent and perfect complexes of \cite{1402.3204} together with the fact that our augmented semi-simplicial diagram $\mathcal{U}(\beta_0)_\bullet$ is bounded and provides an h-hypercover (in the bounded situation, descent and hyperdescent are known to be equivalent). This derived h-descent for pseudo-coherent and perfect complexes is an important feature of derived algebraic geometry, which is not true in the classical setting.

\medskip

\vspace{1cm}

Recall that a (derived) h-cover, following \cite[Section 4.2]{1402.3204}, is a morphism which is represented by a relative algebraic space \todo{Why is this true in our case?} and which is a universal topological submersion. Proper surjections are the example we are interested in. 

One is interested in the semi-simplicial augmented object $\mathcal{U}(\beta_0)_\bullet$. Notice first that it only has finite many levels. This follows from the decomposition assumptions on $\beta_0$. Starting from a certain level $\lambda\geq 1$ all the terms are empty, as the moduli spaces of stable maps with two marked points and degree 0 are empty \footnote{Here of course we use the real moduli with 2 points and now the fake ones introduced before}. We can therefore replace $\mathcal{U}(\beta_0)_\bullet$ by the $\lambda$-coskeletal simplicial diagram $\mathrm{cosk}_\lambda(\mathcal{U}(\beta_0)_\bullet)$ whose colimit inside the theory of derived DM-stacks is again $\mathrm{D}_{\beta_0}$

One could try to show that the semi-simplicial augmented object $\mathcal{U}(\beta_0)_\bullet$ is a  bounded h-hypercover. To check this one would have to check that for each $k\leq \lambda$ the canonical map

\begin{equation}
\label{h-cover}
\resizebox{1 \hsize}{!}{
\xymatrix{
\coprod_{d_0+...+ d_{k+1}=\beta_0}\R\overline{\mathcal{M}}_{0,n}(X,d_0)\times_X  \R\overline{\mathcal{M}}_{0,2}(X,d_1) \times_X ....\times_X \R\overline{\mathcal{M}}_{0,2}(X,d_{k}) \times_X \R\overline{\mathcal{M}}_{0,m}(X,d_{k+1})\ar[r]& \mathrm{cosk}_{k-1}(\mathcal{U}(\beta_0)_\bullet))_{k}
}
}
\end{equation}

\noindent is an h-cover. At the first level we have

\begin{equation}
\label{h-cover2}
\xymatrix{
\coprod_{d_0 + d_{1}=\beta_0}\R\overline{\mathcal{M}}_{0,n}(X,d_0)\times_X  \R\overline{\mathcal{M}}_{0,m}(X,d_{1})\ar[r]& \mathcal{U}(\beta_0)_\bullet)_{-1}
}
\end{equation}

\noindent which in fact is an h-cover as clearly is proper (as both the source and target are proper - see \cite[2.1.2.10, 2.1.2.13]{lurie-spa}), and surjective. 

However, starting from the second level these maps are not surjective as it is easy to see - in $\mathcal{U}(\beta_0)_0\times_{\mathcal{U}(\beta_0)_{-1}} \mathcal{U}(\beta_0)_{0}$ we can only access the first factor by gluing first and the last factor can only be accessed by gluing last, of vice-versa. As it is easily understood, one needs to have both options to have surjectivity. In order words, we have a lack of symmetry originated from the fact that the simplicial presentation of $\mathcal{U}(\beta_0)_\bullet$  required us to fix an order for the different ways of gluing. In order to make $\mathcal{U}(\beta_0)_\bullet$ an hypercover we need to consider all the possible orders. One possible way to achieve this is to consider the \emph{symmetrization} of $\mathcal{U}(\beta_0)_\bullet$. Namely, recall that the simplicial category $\Delta$ can be seen as a non-full subcategory of the category $\mathrm{Fin}$ of non-empty finite sets (also known in the literature as \emph{symmetric simplicial}) where $[n]\in \Delta$ is now seen as an unordered set with $n+1$ elements. Let $\Delta^\mathrm{op}\to \mathrm{Fin}^{\mathrm{op}}$ be the canonical inclusion.  For any category $\C$ having colimits we have a composition functor $\mathrm{Fun}(\mathrm{Fin}^{\mathrm{op}}, \C) \to \mathrm{Fun}(\Delta^{\mathrm{op}}, \C)$ which admits a left adjoint given by taking the left Kan extension along the canonical inclusion. The symmetrization of a simplicial object $U_\bullet$ is the new simplicial object defined by the restriction of this Kan extension, which we will denote as $U_\bullet^\Sigma$. Informally, it is easy to see that the level $n$ of this new simplicial object will be a disjoint union $\coprod_{\sigma\in \Sigma_n+1}U_{[n]}$ and the maps will be induced by all the possible orderings of the boundary maps.

To conclude, instead of $\mathcal{U}(\beta_0)$ we will consider its symmetrization $\mathcal{U}(\beta_0)^\Sigma$, which one can easily now check is an h-hypercover \todo{Should we add more details than this?}. To conclude we combine \cite[8.3.6]{MR2294028} with Quillen's Theorem A \cite[4.1.3.1]{lurie-htt} to deduce that the inclusion $\Delta^\mathrm{op}\to \mathrm{Fin}^{\mathrm{op}}$ is cofinal (see also \cite[4.2.20, 4.2.19, 2.2.7, 3.3.3 and 3.1.1]{MR2294028} and notice the appearance of an opposite relating these to the result in \cite[4.1.3.1]{lurie-htt}). This implies that the unit of the adjunction $\mathcal{U}(\beta_0)_\bullet\to \mathcal{U}(\beta_0)^\Sigma_\bullet$ produces an equivalence after taking colimits.

\begin{prop}
\label{prop-splitting}
We have the following:
\begin{enumerate}
\item The canonical map $\mathcal{O}_{\mathcal{U}(\beta_0)_{-1}}\to f_*(\mathcal{O}_{\mathrm{D}_{\beta_0}})$ is an equivalence. 

\item There is an equality of $\Gtheory$-classes

\begin{equation}
\label{llllllllllkjij}
[\mathcal{O}_{\mathcal{U}(\beta_0)_{-1}}]=[f_*(\mathrm{D}_{\beta_0})]=\sum_k (-1)^k \sum_{\beta_0=d_0+....+ d_{k+1}} [f_{(d_0,...,d_{k+1})_*}(\mathcal{O}_{Z_{(d_0,..., d_{k+1})}})]
\end{equation}
 
\end{enumerate}
\end{prop}

\begin{remark}
\label{remark-comparsionLeemetric}
The Proposition \ref{prop-splitting} and more importantly, the formula (\ref{llllllllllkjij}) is analogous to the key result of \cite[Prop. 11 Section 3.7]{MR2040281} which explains the correction of the splitting principle necessary to handle K-theoretic invariants. Our computations exhibit this formula as a consequence of derived h-descent.
\end{remark}

\begin{proof}[Proof of Proposition \ref{prop-splitting}]Let us start by showing (1). The crucial result that we will be using is \cite[Theorem 4.12]{1402.3204}, namely, that perfect complexes satisfy descent with respect to Cech h-covers. This, combined with the fact that descent for Cech covers implies descent for all n-coskeletal hypercovers (see \cite[6.5.3.9]{lurie-htt} or \cite[Appendix 1]{duggersharon}), tells us that the pullback-pushfoward maps produces an equivalence

$$
\mathrm{Perf}(\mathcal{U}(\beta_0)_{-1})\simeq \, \mathrm{lim}_{[k]\in \Delta^{\mathrm{op}}} \,  \bigoplus_{\beta_0=d_0+...+ d_{k+1}}\mathrm{Perf}( Z_{d_0,...,d_{k+1}})
$$

At the same time, and using \cite[Thm 7.1]{lurie-dagIX} we deduce that the canonical map induced by the pullback functors

\begin{equation}
\xymatrix{
\Qcoh(\mathrm{D}_{\beta_0})\ar[r]& \mathrm{lim}_{[k]\in \Delta^{op}} \bigoplus_{\beta_0=d_0+...+ d_{k+1}}\Qcoh( Z_{d_0,...,d_{k+1}})
}
\end{equation}

\noindent is fully faithful and the unit of the associated adjunction gives an a natural equivalence

 \begin{equation}
f_*\simeq \mathrm{lim}_{[k]\in \Delta^{op}} (\bigoplus_{\beta_0=d_0+...+d_{k+1}} f_{(d_0,...,d_{k+1})_*}f_{(d_0,...,d_{k+1})}^*)
\end{equation}

The two results combined imply the equivalence $f_*(\mathcal{O}_{\mathrm{D}_{\beta_0}})\simeq \mathcal{O}_{\mathcal{U}(\beta_0)_{-1}}$. 

\medskip 
This concludes the proof of (1). (2) is now a consequence of (1) via a standard computation in a stable $\infty$-category (as the simplicial diagram object is finite). 
\end{proof}
\medskip

We now turn to the description of the lax structure in the K-theory action.  Considering the pullback square $\mu$ for a particular grading $\beta_0$ we obtain a pullback square $\mu_0$ that we can fit as

\begin{equation}
\label{laxassociativestructurediagram22}
\xymatrix{
\Mnstable\times \overline{\mathcal{M}}_{0,m}\ar@{^{(}->}[d]^{\phi}& \ar@{}[dl] |{\text{\LARGE{$\llcorner_{\mu_0}$}}} \ar[l]^g\ar@{^{(}->}[d]^i \mathcal{U}(\beta_0)_{-1} &\\
\overline{\mathcal{M}}_{0,n+m-2}& \R\overline{\mathcal{M}}_{0,n+m-2}(X,\beta_0)\ar[l]^-{\mathrm{Stb}}\ar[r]^{ev}& X^{n-1}\times * \times X^{m-1}
}
\end{equation}

The map $\Ktheory_0(X^{n-1}\times X^{m-1})\to \Ktheory_0(\Mnstable\times  \overline{\mathcal{M}}_{0,m})$ that we are interested in, is given by the composition 

\begin{equation}
\label{serie0}
\phi^*\mathrm{Stb}_*ev^*
\end{equation}

Using base-change for derived Deligne-Mumford stacks applied to the diagram $\mu_0$, this is equivalent to 

\begin{equation}
\label{serie1}
g_*i^*ev^*
\end{equation}

But now we know that in $\Gtheory_0(\mathcal{U}(\beta_0)_{-1})$ the structure sheaf can be written as an alternated sum (\ref{llllllllllkjij}), and (\ref{serie1}) becomes

\begin{equation}
\label{serie2}
\sum_k (-1)^k \sum_{\beta_0=d_0+...+ d_{k+1}} g_*f_{(d_0,..., d_k)_*}f_{(d_0,..., d_{k+1})}^*i^*ev^*
\end{equation}

Let now $V_{d_0,..., d_{k+1}}$ denote the stack $\R\overline{\mathcal{M}}_{0,n}(X, d_0)\times \R\overline{\mathcal{M}}_{0,2}(X, d_1)\times ... \times \R\overline{\mathcal{M}}_{0,2}(X, d_{k})  \times  \R\overline{\mathcal{M}}_{0,m}(X, d_{k+1})$. We have pullback diagrams that fit in

\begin{equation}
\label{laxassociativestructurediagram21}
\xymatrix{
\Mnstable \times \overline{\mathcal{M}}_{0,m}&&\\
\ar[u]^{\mathrm{Stb}\times \mathrm{Stb}} V_{d_0,..., d_{k+1}}\ar[rr]^-{ev_{(d_0,...,d_{k+1})}}&& X^{n-1}\times X\times  \underbrace{X^2 \times \times...\times X^2}_{k} \times X \times X^{m-1}\\
\ar @{} [urr] |{\text{\LARGE{$\urcorner$}}}\ar[u]^{q_{(d_0,...,d_{k+1})}}\ar[rr]_{h_{(d_0,.., d_{k+1})}} Z_{d_0,..., d_{k+1}}&& X^{n-1} \times \underbrace{X\times...\times X}_{k} \times X^{m-1}\ar[u]_-{\psi_k:=id^{n-1}\times \underbrace{\Delta \times ... \times \Delta}_{k} \times id^{m-1}} \ar[d]^-{p_k:=id^{n-1}\times pt \times id^{m-1}}\\
&&X^{n-1}\times * \times X^{m-1}
}
\end{equation}

\noindent for each decomposition $\beta_0=d_0+...+ d_{k+1}$.  We now notice that the composition $(\mathrm{Stb}\times \mathrm{Stb})\circ q_{(d_0,...,d_{k+1})}$ is equivalent to $g\circ f_{(d_0,...,d_{k+1})}$ and using the base-change formula for the pullback diagram (\ref{laxassociativestructurediagram21}), (\ref{serie2}) becomes

\begin{equation}
\label{serie3}
\sum_k (-1)^k \sum_{\beta_0=d_0+...+ d_{k+1}} (\mathrm{Stb}\times \mathrm{Stb})_* ev_{(d_0,..., d_{k+1})}^* (\psi_k)_* p_k^*
\end{equation}

\noindent which we can write as 

\begin{equation}
\label{serie3}
(\mathrm{Stb}\times \mathrm{Stb})_* ev_{(\beta_1, \beta_2)}^* (\psi_1)_* p_1^* + \text{extra terms}
\end{equation}

Of course, the first term in the last formula corresponds to the second map $\Ktheory_0(X^{n-1}\times X^{m-1})\to \Ktheory_0(\Mnstable\times \overline{\mathcal{M}}_{0,m})$, obtained by using $\nu_0$. The extra terms (corresponding to $k\geq 1$), which we obtain as a result of derived descent, appear exactly as in \cite[Prop. 11 Section 3.7]{MR2040281}. In order to encode and manage these extra terms Lee and Givental introduced in \cite[Section 4]{MR2040281} and in \cite[p.6]{MR1786492} a combinatorial gadget - which they called a metric. Our results provides a derived computation of these terms in terms of the structure sheaves of the derived stacks $\R\overline{\mathcal{M}}_{0,2}(X,d)$. See the Remark \ref{remark-comparsionLeemetric}.

\subsection{Quantum $\Ktheory$-theory: comparison with the $\Ktheory$-theoretic invariants of Givental-Lee}
\label{section-comparisonourswithLee}
In \cite{MR2040281, MR1786492}, Givental and Lee introduced Gromov-Witten invariants living in $\mathrm{G}$-theory. The basic ingredient to define these invariants is the so-called \emph{virtual structure sheaf} which is an element $\virtualsheaf$ in the Grothendieck group $\Gtheory_0$ of the truncation $t_0(\stablemaps)$. Let $j:  t_0(\stablemaps)\to \stablemaps$ denote the inclusion. Our goal in this section is to explain that the virtual structure sheaf $\virtualsheaf$ constructed by Lee in \cite[Section 2.3]{MR2040281} can be identified with the restriction of the structure sheaf of $\stablemaps$ to the truncation. This in particular implies that their GW classes are the same as the ones obtained from our lax action studied in the previous section. 

%

Let us start with some well-known general preliminaries. Let $F$ be a derived Artin stack and $j:t_0(F)\to F$ its truncation. Then the structure sheaf $\mathcal{O}_F$ produces a family of sheaves on the truncation $\pi_i(\mathcal{O}_F)$.  In general these sheaves are not coherent but one can show that when the base field is of characteristic zero and $F$ is of finite presentation and quasi-smooth then these sheaves are coherent and vanish for $i>> 0$ - see \cite[SubLemma 2.3]{properlocal}.  Under these hypothesis one can show that the map $j_*$ sends coherent complexes to coherent complexes and by devissage induces an isomorphism $j_*:\Gtheory_0(t_0(F))\simeq\Gtheory_0(F)$. Its inverse $(j_*)^{-1}$ sends $\mathcal{O}_F \in \Gtheory_0(F)$ to  $\Sigma_i (-1)^i \pi_i(\mathcal{O}_F)$ which is a finite sum under these hypothesis. Recall also that the Euler characteristics $\chi$ are defined by taking pushfoward to the point

\begin{equation}
\xymatrix{
t_0(F)\ar[dr]^q\ar[r]^j & F\ar[d]^{p}\\
 & \ast
}
\end{equation}

\noindent so that in $\Gtheory$-theory, we have $q_*((j_*)^{-1}(E))\simeq p_*(j_*(j_*)^{-1}(E))=p_*(E)$ for any $E\in \Gtheory_0(F)$.\\

Our main goal in this section is to explain that the virtual structure sheaf $\virtualsheaf\in \Ktheory_0(t_0(F))$ of \cite[Section 2.3]{MR2040281} is given by $(j_*)^{-1}(\mathcal{O}_F)$. In order to explain this we will need some further preliminaries concerning perfect obstruction theories in the sense of \cite[Def. 5.1]{MR1437495}. These were introduced as an ad-hoc way to keep track of derived enhancements of classical stacks. Let $Y$ be an underived stack and $\mathbb{L}_{Y}\in \Qcoh(Y)$ its cotangent complex. The data of an obstruction theory on $Y$ consists of a map of quasi-coherent sheaves $t:T\to \mathbb{L}_Y$ satisfying some conditions which we will allow ourselves to omit here. Informally, $T$ is to be understood as the cotangent complex of a derived enhancement of $Y$. It is said to be \emph{perfect} if $T$ is a perfect complex. Suppose now that there exists a derived stack $F$ whose truncation $t_0(F)$ is $Y$. This produces a natural associated obstruction theory on $Y$: let $\mathbb{L}_F\in \Qcoh(F)$ denote the cotangent complex of $F$. Then we have a natural map $T:=j^*(\mathbb{L}_F)\to \mathbb{L}_{t_0(F)}$. Following \cite[Cor. 1.3]{2011-Schur-Toen-Vezzosi}, if $F$ is general geometric stack then this map defines an obstruction theory (this follows from Lurie's connectivity estimates) and if $F$ is in particular quasi-smooth then this is a $[-1,0]$-\emph{perfect obstruction theory}, meaning that $T$ is perfect and concentrated Tor-amplitude $-1$ and $0$. One can also ask if this assignment is essentially surjective: this is not true and we can identify the obstructions to produce a lifting \cite{timo-obstruction}.\\

Every perfect obstruction theory on $Y$ produces a virtual sheaf, namely an object in $\Ktheory(Y)$. This a consequence of a more structured fact: we will show that to every obstruction theory $t$ on $Y$ one can naturally associate a derived enhancement of $Y$,  $\R\mathrm{Obs}(t)$ that splits. By definition, the virtual structure sheaf associated to $t$ is given by the recipe described above, applied to the truncation map $i:Y\subseteq \R\mathrm{Obs}(t)$, meaning $\virtualsheaf(t):=(i_*)^{-1}(\mathcal{O}_{ \R\mathrm{Obs}(t)})$. In general, $i^*\mathbb{L}_{\R\mathrm{Obs}(t)}$ will be different from $T$, as, due to the splitting, it will  be of the form  $\mathbb{L}_Y\oplus \mathbb{L}_i[-1]$.

The construction of the derived enhancement $\R\mathrm{Obs}(t)$ follows from the observation that the construction of the virtual fundamental classes described in \cite{MR1437495} has a natural interpretation as homotopy fiber products in derived algebraic geometry. Indeed, let $Y$ be classical Artin stack together with the data of a perfect obstruction theory $t:T\to \mathbb{L}_Y$. Then let $\mathfrak{C}(T)$ be the (classical) cone stack associated to $T$ \cite[Section 2]{MR1437495}\footnote{Denoted there as $h^1/h^0(T^{\vee})$.} and let $\mathfrak{C}_Y$ be the intrinsic normal cone of $Y$. Then both $\mathfrak{C}(T)$ and $\mathfrak{C}_Y$ are cone stacks over $Y$, $\mathfrak{C}_Y$ is a closed sub-stack of $\mathfrak{C}(T)$ and $Y$ can be embedded in both of them via the zero section \cite[Prop 2.4, 2,6 and Def. 3.10]{MR1437495}. We now use the inclusion of classical 1-stacks inside derived stacks and see these three stacks as derived objects in a trivial way. We define a new derived stack $\R\mathrm{Obs}(t)$ to the pullback in the $(\infty,1)$-category of derived stacks

\begin{equation}
\xymatrix{
\ar[d]^{r}\ar[r]\R\mathrm{Obs}(t)&\ar@{^{(}->}[d]  \mathfrak{C}_Y \\
Y \ar@{^{(}->}[r]&\mathfrak{C}(T)
}
\end{equation}

\noindent where the map $Y\subseteq \mathfrak{C}(T)$ is the zero section and  $\mathfrak{C}_Y\subseteq \mathfrak{C}(T)$ is the closed immersion produced by the obstruction theory. In general the inclusion of classical stacks in derived stacks does not commute with homotopy fiber products. In fact, in this case, the usual  fiber product in classical stacks is equivalent to $Y$ (as $Y$ can be embedded both in $\mathfrak{C}(T)$ and  $\mathfrak{C}_Y$ via the zero section). The truncation functor however commutes with products, and therefore we deduce that $t_0( \R\mathrm{Obs}(t))=Y$, or in other words $\R\mathrm{Obs}(t)$ is a derived enhancement of $Y$. It is also clear from the definition of derived fiber products that the structure sheaf of this derived stack is responsible for the virtual structure sheaf described in \cite[Remark 2.4]{MR1437495}. This derived enhancement of $Y$, with truncation map $i:Y\subseteq \R\mathrm{Obs}(t)$, has a particular feature - it splits via the map $r$. \personal{ otice that as the inclusion of the intrinsic normal cone (vertical right arrow in the diagram above) is claimed to be a closed immersion if and only if the obstruction theory is perfect Prop.2.6 B-F, we find that by stability under pullbacks $r$ is also a closed immersion of derived schemes. Notice that this does not implie the Obstruction derived stack to be classical as the example $k\to k[\epsilon]\to k$ with $ \epsilon$  in degree -1 shows. It follows that in $\Gtheory$-theory, the truncation map $i:Y\subseteq \R\mathrm{Obs}(t)$ verifies $(i_*)^{-1}=r_*$ as $r_*\circ i_*=Id$ implies $r_*\circ i_*\circ (i_*)^{-1}=(i_*)^{-1}$ and $r_*$ being a closed immersion preserves bounded pseudo-coherent (I was confused about this because of the example $k[u]\to k$ with $u$ in degree -2. The point is that $k[u]$ is not eventually coconective so its structure sheaf is not coherent. No contradiction.). In this case, the virtual structure sheaf associated to the obstruction theory is $\virtualsheaf(t):=r_*\mathcal{O}_{ \R\mathrm{Obs}(t)}$.}.\\

 The virtual structure sheaf  $\virtualsheaf\in \Gtheory_0(t_0(F))$ of \cite[Section 2.3]{MR2040281} is defined by the recipe given in the previous paragraph using the following (relative) obstruction theory as input: Let $\Mnprestable$ denote the stack of all pre-stable curves of genus zero with $n$ marked points and let $\mathcal{C}_{0.n, \beta}\to \Mnprestable$ denote the universal pre-stable curve of total degree $\beta$. Then we have a commutative diagram

\begin{equation}
\label{obstructionad-hoc}
\xymatrix{
\mathcal{C}_{0.n, \beta}\times_{\Mnprestable}t_0(\stablemaps)\ar[r]^-{ev_0}\ar[d]^{\pi_0}& X\times \Mnprestable\ar[d]^u\\
t_0(\stablemaps)\ar[r]^{q_0}& \Mnprestable
}
\end{equation}

\noindent where $ev_0$ is the evaluation map and $\pi_0$ is the projection to the second factor. Notice that in this case the relative cotangent complex $\mathbb{L}_u$ is equivalent to $\mathbb{L}_X$. Following the steps in the discussion preceding \cite[Prop 6.2]{MR1437495}, we find a natural map $t:((\pi_0)_* (ev_0)^* \mathbb{T}_X)^{\vee} \to \mathbb{L}_{q_0}$ in $\Qcoh(Y)$ with $Y=t_0(\stablemaps)$. By \cite[Prop 6.2]{MR1437495} this is a perfect obstruction theory. The virtual structure sheaf considered by Lee in \cite[Section 2.3]{MR2040281} can be immediately identified with the element $\virtualsheaf(t)$ described above, induced by the derived stack $\R\mathrm{Obs}(t)$.\\

\begin{prop}
The obstruction theory used by Lee is the same as the obstruction theory produced by the derived enrichment $F:=\stablemaps$ of $Y:=t_0(\stablemaps)$
\begin{proof}
This follows essentially from the description of the tangent complex of a mapping stack $\rhom(U,V)$ when $U$ and $V$ are derived Artin stacks. In this case one can exhibit a canonical equivalence

\begin{equation}
\label{eq-tangentmappingstack}
\mathbb{T}_{\rhom(U,V)}\simeq \pi_* ev^* (\mathbb{T}_V)
\end{equation}

\noindent in $\Qcoh(\rhom(U,V))$. Here $\pi$ is the projection $U\times \rhom(U,V)\to \rhom(U,V)$ and $ev: U\times \rhom(U,V)\to V$ is the evaluation map. To see this we consider the diagram
 
$$
\xymatrix{
\rhom(U,V)\times U\ar[r]^-{ev} \ar[d]^{\pi}& V\\
\rhom(U,V)
}
$$

\noindent together with the fact that by definition we have  $\mathrm{Qcoh}(\rhom(U,V))\simeq lim_{Spec(A)\to\rhom(U,V) } \mathrm{D}(A)$ and the definition of tangent stack: If $V$ is a derived stack, we denote by
$$TV:= \rhom(Spec(\mathbb{C}[\epsilon]), V)$$
the derived stack of morphisms, endowed with the natural map $TV\to V$ given by the composition with the natural inclusion of the point $Spec(\mathbb{C})\to Spec(\mathbb{C}[\epsilon])$. The derived stack $TV$ is therefore completely determined by the cotangent complex of $V$ in the sense that for any $u:Spec(A)\to V$ we have

$$
Map_{\C/V}(Spec(A), TV) \simeq Map_{\mathrm{D}(A)}(A, u^*\mathbb{T}_V)\simeq Map_{\mathrm{D}(A)}(u^*\mathbb{L}_V) , A)
$$

In this case we see that by definition we have

$$T\rhom(U,V)= \rhom(Spec(\mathbb{C}[\epsilon]), \rhom(U V))\simeq\rhom(U, TV)$$

\noindent so that, given  $x_u:Spec(A)\to \rhom(U,V)$ determined by $u:Spec(A)\times U\to V$ we have that $Map_{\C/V}(Spec(A), TV)$ is equivalent to the space of extensions
$$
\xymatrix{
& \rhom(U, TV)\ar[d]\\
Spec(A)\ar[r]^{x_u}\ar@{-->}[ur]& \rhom(U,V)
}
$$

\noindent which by definition of  $\rhom$, is the space of all extensions

$$
\xymatrix{
& TV\ar[d]\\
Spec(A)\times U\ar[r]^-{u}\ar@{-->}[ur]& V
}
$$

\noindent which is equivalent to the space

$$Map_{\C/V}(Spec(A)\times U, TV)\simeq Map_{Qcoh(Spec(A)\times U)}(\mathcal{O}_{Spec(A)\times U}, u^*\mathbb{T}_V)\simeq  Map_{\mathrm{D}(A)}( A, p_* u^* \mathbb{T}_V)$$ 

\noindent with $p: Spec(A)\times U\to Spec(A)$ the projection.\\

Using the fact that  $u= (ev\circ (x_u\times id))$, the base-change property for the diagram
$$
\xymatrix{
Spec(A)\times U \ar[d]^p \ar[r]^-{x_u\times Id}& \rhom(U,V)\times U\ar[d]^{\pi}\\
Spec(A)\ar[r]^{x_u}& \rhom(U,V)
}$$

\noindent gives us

$$
p_* u^* \simeq x_u^* \pi_* ev^*
$$

The descent property for tangent complexes and reduction to the affine case allows us to conclude the proof of the formula (\ref{eq-tangentmappingstack}).

\medskip

Finally, the formula (\ref{eq-tangentmappingstack}) admits a relative version which we can apply to the diagram of derived stacks

\begin{equation}
\label{obstructionad-hoc2}
\xymatrix{
\mathcal{C}_{0.n, \beta}\times_{\Mnprestable}\stablemaps\ar[r]^-{ev}\ar[d]^{\pi}& X\times \Mnprestable\ar[d]^u\\
\stablemaps\ar[r]^{q}& \Mnprestable
}
\end{equation}

\noindent to deduce an equivalence between the relative tangent complexes $\pi_*ev^*\mathbb{T}_X\simeq \mathbb{T}_q$. The diagrams (\ref{obstructionad-hoc}) and (\ref{obstructionad-hoc2}) fit in a larger commutative diagram

\begin{equation}
\label{obstructionad-hoc3}
\xymatrix{
\ar[dr]^i\mathcal{C}_{0.n, \beta}\times_{\Mnprestable}t_0(\stablemaps)\ar[rr]^{ev_0}\ar[dd]^{\pi_0}&& X\times \Mnprestable\ar[dd]^u\\
&\mathcal{C}_{0.n, \beta}\times_{\Mnprestable}\stablemaps\ar[dd] \ar[ur]^{ev}\ar[dd]&\\
t_0(\stablemaps)\ar[rr]^{q_0}\ar[dr]^j&& \Mnprestable\\
&\stablemaps\ar[ur]^{q}&
}
\end{equation}

\noindent where $i$ and $j$ are the truncation maps. Moreover, the face with the truncation maps is a pullback square as $\mathcal{C}_{0.n, \beta}$ is already truncated. As these are derived Deligne-Mumford stacks we can apply the base-change formulas (again, see \cite[Cor. 1.4.5]{1108.5351} or \cite[B.15]{1402.3204}) and deduce that $(\pi_0)_* (ev_0)^* \mathbb{T}_X\simeq (\pi_0)_*i^* ev^*\mathbb{T}_X\simeq j^*\pi_*ev^*\mathbb{T}_X\simeq j^*\mathbb{T}_q$. To conclude we contemplate that the natural map $j^*( \mathbb{T}_q)^{\vee}\to \mathbb{L}_{q_0}$ can be naturally identified with the map constructed in \cite[Prop 6.2]{MR1437495}.\\
\end{proof}
\end{prop}

Here's the current status of the situation. We have a classical stack $Y=t_0(\stablemaps)$ and a derived enhancement $F=\stablemaps$ which produces an obstruction theory $t$ and therefore a second derived enhancement $\R\mathrm{Obs}(t)$ of $Y$. These fit in a diagram

$$
\xymatrix{
\R\mathrm{Obs}(t)& \ar@{_{(}->}[l]_i   Y\ar@{^{(}->}[r]^j & F
}
$$

\noindent which is an isomorphism after truncation and therefore, in $\Gtheory$-theory groups.

$$
\xymatrix{
\Gtheory_0(\R\mathrm{Obs}(t))& \ar[l]^-{i_*}_-{\sim}  \Gtheory_0(Y)\ar[r]^{j_*}_{\sim} & \Gtheory_0(F)
}
$$

To complete the proof one must show that $\virtualsheaf(t):=(i_*)^{-1}(\mathcal{O}_{\R\mathrm{Obs}(t)})$ is equal to $(j_*)^{-1}(\mathcal{O}_F)$ in $\Gtheory_0(Y)$.

\begin{prop}
\label{prop-comparisonvirtualsheaves}
One has an equality of G-theory classes between $\virtualsheaf(t):=(i_*)^{-1}(\mathcal{O}_{\R\mathrm{Obs}(t)})$ and $(j_*)^{-1}(\mathcal{O}_F)$ in $\Gtheory_0(Y)$.
\begin{proof}
In fact, this identification has already been established in the case our derived stack is assumed to be embedded in a smooth stack: in \cite[Proof of Thm 3.3]{MR2496057} Kapranov-Fontanine identified the two classes in the case of quasi-smooth dg-manifolds (a possible incarnation of derived schemes) and more recently in \cite[Section 7.2.2]{1208.6325} the authors explain how the identification of the two classes for quasi-smooth derived schemes follows from: deformation to the normal cone  together with $\mathbb{A}^1$-invariance of $\Gtheory$-theory. 
To conclude, we remark that our situation is known to admit a global resolution in the sense required: see the discussion in \cite[Section 2.3]{MR2040281} and \cite[Appendix A]{MR1666787}. In the genus zero case the situation becomes simpler as this global resolution is given by the closed embedding of $\stablemaps$ in $\mathbb{R}\overline{\mathcal{M}}_{0,n}(\mathbb{P}^n, \beta)$ given by the fact $X$ is projective. The last derived stack is known to be smooth because the projective space is convex.
\end{proof}
\end{prop}

%

\vspace{0.5cm}

Finally, with the two virtual sheaves identified, it is clear that 

\begin{cor}
\label{cor-comparisonwithLee}
Our lax action produces the same  $\Ktheory$-theoretic classes of Lee.
\begin{proof}
Given $E_1,.., E_n \in \Ktheory_0(X)=\Gtheory_0(X)$ and $P\in \Ktheory_0(\Mnstable)=\Gtheory_0(\Mnstable)$, the $\Ktheory$-invariants of Lee are defined by the Euler characteristics

\begin{equation}
\label{Kinvariantsformula}
\chi_{0,n,\beta}(\virtualsheaf\otimes j^*(ev^*(E_1,..., E_n)\otimes\mathrm{Stb}^{*}(P))):= q_*(\virtualsheaf\otimes j^*(ev^*(E_1,..., E_n)\otimes\mathrm{Stb}^{*}(P)))\simeq
\end{equation}

\begin{equation}
\label{Kinvariantsformula3}
\simeq p_*j_*(\virtualsheaf\otimes j^*(ev^*(E_1,..., E_n)\otimes\mathrm{Stb}^{*}(P)))
\end{equation}

\noindent which by the projection formula for $j$ are equivalent to

\begin{eqnarray}
\label{Kinvariantsformula2}
\simeq p_*(j_*(\virtualsheaf)\otimes ev^*(E_1,..., E_n)\otimes\mathrm{Stb}^{*}(P))
\end{eqnarray}

\noindent and finally, by the comparison arguments above,

$$
\simeq p_*(\mathcal{O}_{\stablemaps}\otimes ev^*(E_1,..., E_n)\otimes \mathrm{Stb}^{*}(P))
$$

\noindent which, by definition, are the $\Ktheory$-theoretic Gromov-Witten numbers produced from our lax action.\\
\end{proof}
\end{cor}

\bibliographystyle{alpha}	
\bibliography{biblio}

\end{document}